\documentclass{amsart}

\usepackage{fullpage}

\usepackage{amsmath}
\usepackage{amsfonts}
\usepackage{amssymb}
\usepackage{graphicx}
\usepackage{mathrsfs}
\usepackage{amsthm}
\usepackage{enumerate}
\usepackage{leftidx}
\usepackage{hyperref}
\usepackage{extarrows}
\usepackage[all]{xy}
\usepackage{stmaryrd}
\usepackage{wasysym}
\usepackage[OT2,T1]{fontenc}
\usepackage{epstopdf}
\usepackage {hyperref}
\vfuzz2pt 
\hfuzz2pt 

\setlength{\itemsep}{0pt}
 \setcounter{totalnumber}{3}
 \setcounter{topnumber}{1}
 \setcounter{bottomnumber}{3}
 \setcounter{secnumdepth}{3}

\numberwithin{equation}{subsubsection}
\newtheorem{thm}[subsubsection]{Theorem}
\newtheorem*{thm*}{Theorem}
\newtheorem*{thmA}{Theorem A}
\newtheorem*{thmB}{Theorem B}

\newtheorem{cor}[subsubsection]{Corollary}
\newtheorem{lem}[subsubsection]{Lemma}
\newtheorem{prop}[subsubsection]{Proposition}
\theoremstyle{definition}
\newtheorem{defn}[subsubsection]{Definition}
\theoremstyle{remark}
\newtheorem{rem}[subsubsection]{Remark}




\DeclareMathOperator{\anti}{anti}

\DeclareMathOperator{\ad}{ad}

\DeclareMathOperator{\Aut}{Aut}

\DeclareMathOperator{\cris}{cris}

\DeclareMathOperator{\diag}{diag}

\DeclareMathOperator{\End}{End}

\DeclareMathOperator{\Hom}{Hom}

\DeclareMathOperator{\Fil}{Fil}

\DeclareMathOperator{\im}{im}
\DeclareMathOperator{\Lie}{Lie}

\DeclareMathOperator{\Gr}{Gr}
\DeclareMathOperator{\OGr}{OGr}

\DeclareMathOperator{\triv}{triv}

\DeclareMathOperator{\Sh}{Sh}
\DeclareMathOperator{\id}{id}

\DeclareMathOperator{\spec}{Spec}

\DeclareMathOperator{\GL}{GL}
\DeclareMathOperator{\Gsp}{GSp}
\DeclareMathOperator{\GSp}{GSp}

\DeclareMathOperator{\SO}{SO}

\DeclareMathOperator{\Gspin}{GSpin}
\DeclareMathOperator{\GSpin}{GSpin}

\DeclareMathOperator{\RZ}{RZ}
\DeclareMathOperator{\BT}{BT}
\DeclareMathOperator{\Spf}{Spf}
\DeclareMathOperator{\VL}{VL}
\DeclareMathOperator{\ord}{ord}
\DeclareMathOperator{\inv}{inv}
\DeclareMathOperator{\nilp}{Nilp}
\DeclareMathOperator{\anilp}{ANilp^{fsm}_W}

\newcommand{\lprod}[1]{\langle#1\rangle}
\newcommand{\adele}{\mathbb A}
\newcommand{\aff}[1]{\mathbb A^{#1}}

\newcommand{\set}[1]{\left\{#1\right\}}

\newcommand{\To}{\longrightarrow}
\newcommand{\isom}{\overset{\sim}{\To}}

\newcommand{\NN}{\mathbb{N}}

\newcommand{\ZZ}{\mathbb{Z}}
\newcommand{\QQ}{\mathbb{Q}}

\newcommand{\FF}{\mathbb{F}}

\newcommand{\GG}{\mathbb{G}}
\newcommand{\oo}{\mathcal{O}}

\newcommand{\Shh}{\mathscr{S}}

\newcommand{\LL}{\mathcal L}

\newcommand{\ignore}[1]{}
\DeclareSymbolFont{cyrletters}{OT2}{wncyr}{m}{n}
\DeclareMathSymbol{\Sha}{\mathalpha}{cyrletters}{"58}

\usepackage{enumitem}

\setlist[enumerate]{leftmargin=*}
\setlist[itemize]{leftmargin=*}

\linespread{1.2}

\title{Arithmetic intersection on GSpin Rapoport--Zink spaces}

\author[Chao Li]{Chao Li}\email{chaoli@math.columbia.edu} 
\address{Department of Mathematics, Columbia University, 2990 Broadway,
  New York, NY 10027}

\author[Yihang Zhu]{Yihang Zhu}\email{yihang@math.harvard.edu}
\address{Department of Mathematics, Harvard University, 1 Oxford Street, Cambridge, MA 02138}

\subjclass[2010]{11G18, 14G17; secondary 22E55}
\keywords{Arithmetic Gan--Gross--Prasad conjecture, Rapoport--Zink spaces, spinor groups, special cycles}

\date{\today}

\begin{document}

\maketitle

\begin{abstract}
  We prove an explicit formula for the arithmetic intersection number of diagonal cycles on GSpin Rapoport--Zink spaces in the minuscule case. This is a local problem arising from the arithmetic Gan--Gross--Prasad conjecture for orthogonal Shimura varieties. Our formula can be viewed as an orthogonal counterpart of the arithmetic-geometric side of the arithmetic fundamental lemma proved by Rapoport--Terstiege--Zhang in the minuscule case.
\end{abstract}

\setcounter{tocdepth}{1}
\tableofcontents{}

\section{Introduction}

\subsection{Motivation}

The \emph{arithmetic Gan--Gross--Prasad conjectures} (arithmetic GGP) generalize the celebrated Gross--Zagier formula to higher dimensional Shimura varieties (\cite[\S 27]{Gan2012}, \cite[\S 3.2]{Zhang2012}). It is a conjectural identity relating the heights of certain algebraic cycles on Shimura varieties to the central derivative of certain Rankin--Selberg $L$-functions. Let us briefly recall the rough statement of the conjecture. The diagonal embeddings of unitary groups $$H=\mathrm{U} (1,n-1)\hookrightarrow G=\mathrm{U}(1, n-1)\times \mathrm{U} (1, n)$$ or of orthogonal groups $$H=\SO(2, n-1)\hookrightarrow G=\SO(2,n-1)\times \SO(2,n),$$ induces an embedding of Shimura varieties $\Sh_H\hookrightarrow \Sh_G$. We denote its image by $\Delta$ and call it the \emph{diagonal cycle} or the \emph{GGP cycle} on $\Sh_G$. Let $\pi$ be a tempered cuspidal automorphic representation on $G$ appearing in the middle cohomology of $\Sh_G$. Let  $\Delta_\pi$ be the (cohomological trivialization) of the $\pi$-component of $\Delta$. The arithmetic GGP conjecture asserts that the (conditional) Beilinson--Bloch--Gillet--Soul\'e height of $\Delta_\pi$   should be given by the central derivative of a certain Rankin-Selberg $L$-function $L(s, \pi)$ up to simpler factors $$\langle \Delta_\pi,\Delta_\pi\rangle\sim L'(1/2, \pi).$$

The Gross--Zagier formula \cite{Gross1986} and the work of Gross, Kudla, Schoen (\cite{Gross1992}, \cite{Gross1995}) can be viewed as the special cases $n=1$ and $n=2$ in the orthogonal case correspondingly. The recent work of Yuan--Zhang--Zhang (\cite{Yuan2013}, \cite{Yuan})  has proved this conjecture for $n=1,2$ in the orthogonal case in vast generality.

In the unitary case, W. Zhang has proposed an approach for general $n$ using the relative trace formula of Jacquet--Rallis. The relevant arithmetic fundamental lemma relates an arithmetic intersection number of GGP cycles on unitary Rapoport--Zink spaces with a derivative of orbital integrals on general linear groups. The arithmetic fundamental lemma has been verified for $n=1,2$ (\cite{Zhang2012}) and for general $n$ in the minuscule case by Rapoport--Terstiege--Zhang \cite{RTZ}.

In the orthogonal case, very little is known currently beyond $n=1,2$ and no relative trace formula approach has been proposed yet. However it is notable that R. Krishna \cite{Krishna2016} has recently established a relative trace formula for the case $\SO(2)\times \SO(3)$ and one can hope that his method will generalize to formulate a relative trace formula approach for general $\SO(n-1)\times \SO(n)$.

Our goal in this article is to establish an orthogonal counterpart of the arithmetic-geometric side of the arithmetic fundamental lemma in \cite{RTZ}, namely to formulate and compute the arithmetic intersection of GGP cycles on $\GSpin$ Rapoport--Zink spaces in the minuscule case.

\subsection{The main results} Let $p$ be an odd prime. Let $k=\overline{\mathbb{F}}_p$, $W=W(k)$, $K=W[1/p]$ and $\sigma\in\Aut(W)$ be the lift of the absolute $p$-Frobenius on $k$. Let $n\ge4$. Let $V^\flat$ be a self-dual quadratic space over $\ZZ_p$ of rank $n-1$ and let $V=V^{\flat} \oplus \mathbb{Z}_p x_n$ (orthogonal direct sum) be a self-dual quadratic space over $\ZZ_p$ of rank $n$, where $x_n$ has norm 1. Associated to the embedding of quadratic spaces $V^\flat\hookrightarrow V$ we have an embedding of algebraic groups $G^\flat=\GSpin(V^\flat)\hookrightarrow G=\GSpin(V)$ over $\mathbb{Z}_p$. After suitably choosing compatible local unramified Shimura--Hodge data $(G^\flat, b^\flat, \mu^\flat, C(V^\flat))\hookrightarrow (G, b, \mu, C(V))$, we obtain a closed immersion of the associated $\GSpin$ Rapoport--Zink spaces $$\delta: \RZ^\flat\hookrightarrow \RZ.$$  See \S \ref{sec:gspin-rapoport-zink} for precise definitions and see \S \ref{sec:relat-with-spec} for the moduli interpretation of $\delta$.   The space $\RZ$ is an example of Rapoport--Zink spaces of Hodge type, recently constructed by Kim \cite{Kim2013} and Howard--Pappas \cite{Howard2015}. It is a formal scheme over $\Spf W$, parameterizing deformations of a $p$-divisible group $\mathbb{X}_0/k$ with certain crystalline Tate tensors (coming from the defining tensors of $G$ inside some $\GL_N$). Roughly speaking, if $X^\flat$ is the $p$-divisible group underlying a point $x\in \RZ^\flat$, then the $p$-divisible group underlying $\delta(x) \in \RZ$ is given by $X= X^{\flat} \oplus X^{\flat}$.

\begin{rem}
 The datum $(G,b,\mu, C(V))$ is chosen such that the space $\RZ$ provides a $p$-adic uniformization of $(\widehat {\mathscr{S}_W})_{/\mathscr{S}_{\mathrm{ss}}}$, the formal completion of $\mathscr{S}_W$ along $\mathscr{S}_{\mathrm{ss}}$, where  $\mathscr{S}_W$ is the base change to $W$ of Kisin's integral model (\cite{kisin2010integral}) of a $\GSpin$ Shimura variety (which is of Hodge type) at a good prime $p$, and $\mathscr{S}_{\mathrm{ss}}$ is the supersingular locus (= the basic locus in this case) 
  of the special fiber of $\Shh_W$ (see \cite[7.2]{Howard2015}).
\end{rem}

The group $J_b(\mathbb{Q}_p)=\{g\in G(K): gb=b\sigma(g)\}$ is the $\mathbb{Q}_p$-points of an inner form of $G$ and acts on $\RZ$ via its action on the fixed $p$-divisible group $\mathbb{X}_0$. Let $g\in J_b(\mathbb{Q}_p)$. As explained in \S \ref{sec:intersection-problem}, the intersection of the GGP cycle $\Delta$ on $\RZ^\flat\times_{W}\RZ$ and its $g$-translate leads to study of the formal scheme
\begin{equation}
  \label{eq:intersectionformalscheme}
  \delta(\RZ^\flat)\cap \RZ^g,
\end{equation}
 where $\RZ^g$ denotes the $g$-fixed points of $\RZ$.

 We call $g\in J_b(\mathbb{Q}_p)$ \emph{regular semisimple} if $$L(g):=\mathbb{Z}_px_n+\mathbb{Z}_p gx_n+ \cdots+\mathbb{Z}_p g^{n-1}x_n$$ is a free $\mathbb{Z}_p$-module of rank $n$. Let $L(g)^{\vee}$ denote the dual lattice of $L(g)$. We further call $g$ \emph{minuscule} if $L(g) \subset L(g)^{\vee} $ (i.e. the quadratic form restricted to $L(g)$ is valued in $\ZZ_p$), and $L(g)^\vee/L(g)$ is a $\mathbb{F}_p$-vector space. See Definition \ref{def:relative regular} for equivalent definitions. When $g\in J_b(\mathbb{Q}_p)$ is regular semisimple and minuscule, we will show that the formal scheme (\ref{eq:intersectionformalscheme})  is in fact a 0-dimensional scheme of characteristic $p$. Our main theorem is an explicit formula for its arithmetic intersection number  (i.e., the total $W$-length of its local rings).

To state the formula, assume $g$ is regular semisimple and minuscule, and suppose $\RZ^g$ is nonempty. Then $g$ stabilizes both $L(g)^\vee$ and $L(g)$ and thus acts on the $\mathbb{F}_p$-vector space $L(g)^\vee/L(g)$. Let $P(T)\in \mathbb{F}_p[T]$ be the characteristic polynomial of $g$ acting on $L(g)^\vee/L(g)$. For any irreducible polynomial $R(T)\in \mathbb{F}_p[T]$, we denote its multiplicity in $P(T)$ by $m(R(T))$. Moreover, for any polynomial $R(T)$, we define its \emph{reciprocal} by $$R^*(T):=T^{\deg R(T)}\cdot R(1/T).$$ We say $R(T)$ is \emph{self-reciprocal} if $R(T)=R^*(T)$. Now we are ready to state our main theorem:

\begin{thmA}\label{main thm}
  Let $g\in J_b(\mathbb{Q}_p)$ be regular semisimple and minuscule. Assume $\RZ^g$ is non-empty. Then
  \begin{enumerate}
  \item  (Corollary \ref{concentration}) $\delta(\RZ^\flat)\cap \RZ^g$ is a scheme of characteristic $p$.
  \item (Theorem \ref{thm:pointscounting}) $\delta(\RZ^\flat)\cap \RZ^g$ is non-empty if and only if $P(T)$ has a unique self-reciprocal monic irreducible factor $Q(T)|P(T)$ such that $m(Q(T))$ is odd. In this case, $p^\mathbb{Z}\backslash(\delta(\RZ^\flat)\cap\RZ^g)(k)$ is finite and has cardinality $$\deg Q(T)\cdot \prod_{R(T)}(1+m(R(T))),$$ where $R(T)$ runs over all non-self-reciprocal monic irreducible factors of $P(T)$. Here, the group $p^{\ZZ}$ acts on $\RZ$ via the central embedding $p^{\ZZ} \hookrightarrow J_b(\QQ_p)$, and the action stabilizes $\delta(\RZ^\flat)\cap\RZ^g$.
  \item (Corollary \ref{cor:mainformula}) Let $c = \frac{m(Q(T))+1}{2}$. Then $1\leq c \leq n/2$. Assume $p>c$. Then $\delta(\RZ^\flat)\cap \RZ^g$ is a disjoint union over its $k$-points of copies of $\spec k[X]/X^c$. In particular, the intersection multiplicity at each $k$-point of $\delta(\RZ^\flat)\cap \RZ^g$ is the same and equals $c$. 
  \end{enumerate}
\end{thmA}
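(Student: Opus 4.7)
The plan is to establish the three parts via a combination of Dieudonn\'e/crystalline lattice descriptions of points of $\RZ$, linear-algebra analysis on the $\FF_p$-vector space $L(g)^\vee/L(g)$, and Grothendieck--Messing style deformation theory for the local multiplicity. The three parts are largely independent in character, so I would treat them in sequence, starting from the moduli-theoretic setup: a $T$-point of $\delta(\RZ^\flat)\cap\RZ^g$ is a deformation of $\mathbb{X}_0$ to $T$ that both splits compatibly with $V=V^\flat\oplus\ZZ_p x_n$ and is fixed by $g$ acting on $\mathbb{X}_0$.

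For (1), I would argue that any $\overline{\QQ}_p$-valued point of $\delta(\RZ^\flat)\cap\RZ^g$ produces a lifting of the crystalline tensors that is simultaneously compatible with $g$ and with the splitting. The regular semisimplicity of $g$ pins down its action on the Hodge filtration, while the $\delta$-condition forces a different filtration coming from $x_n$; comparing these via the minuscule hypothesis yields a contradiction in characteristic zero. Hence the intersection is supported in the special fiber, i.e.\ annihilated by a power of $p$.

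For (2), the key is to translate $\delta(\RZ^\flat)\cap\RZ^g$ into a lattice-counting problem. By the description of the basic locus for $\GSpin$ Rapoport--Zink spaces due to Howard--Pappas, modulo the central $p^\ZZ$ the $k$-points of $\RZ^g$ correspond to $g$-stable special lattices $L\subset L^\vee$ in the isocrystal, subject to the usual Frobenius compatibility. The $\delta$-condition translates to the requirement that $L$ split off the line spanned by $x_n$ in an appropriate sense. Such an $L$ is determined by a $g$-stable isotropic subspace of the $\FF_p$-quadratic space $L(g)^\vee/L(g)$ of the correct dimension. A classical fact about $\FF_p[T]$-modules with a compatible quadratic form (decomposition according to the primary decomposition, with self-reciprocal blocks supporting nondegenerate forms and non-self-reciprocal factors pairing up) reduces the problem to counting $g$-stable isotropic lines. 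Existence of such a line requires exactly one self-reciprocal irreducible factor $Q$ of odd multiplicity, and the enumeration yields the formula $\deg Q\cdot\prod_R(1+m(R))$ where $R$ ranges over non-self-reciprocal irreducible factors.

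For (3), the essential input is the deformation theory of $p$-divisible groups with crystalline tensors (Faltings--Kim, as used by Howard--Pappas), combined with Grothendieck--Messing. Near a fixed $k$-point $x$, the complete local ring $\widehat{\oo}_{\RZ,x}$ is a formal power series ring over $W$ whose tangent space can be identified with a piece of $\End$ of the Dieudonn\'e module; the closed subschemes $\delta(\RZ^\flat)$ and $\RZ^g$ cut out explicit ideals coming from the splitting $V=V^\flat\oplus\ZZ_p x_n$ and from $gx=x$ respectively. Choosing coordinates adapted to the Jordan structure of $g$ on $L(g)^\vee/L(g)$, I would show that after imposing both conditions the only surviving direction corresponds to the unique self-reciprocal block $Q$, giving $\spec k[X]/X^c$ with $c=(m(Q)+1)/2$. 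The hypothesis $p>c$ guarantees that the formal exponential used to pass from the linearized tangent-level computation to the true local ring converges, and rules out wild intersection-multiplicity phenomena. The bounds $1\le c\le n/2$ follow from the fact that $L(g)^\vee/L(g)$ has dimension at most $n$. The main obstacle is this step: controlling the defining equations in the formal deformation ring requires coordinates adapted to the $g$-action on the Dieudonn\'e module, and matching the algebraic Jordan block size with the length of the local ring; parts (1) and (2) should follow cleanly once the lattice-theoretic description is in place.
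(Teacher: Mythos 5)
Your outline for part (2) is broadly in the right direction (special lattices, $g$-stable isotropic subspaces of $L(g)^\vee/L(g)$, pairing of non-self-reciprocal factors), though it glosses over where the factor $\deg Q(T)$ comes from: in the paper this is the Coxeter number of the non-split even orthogonal group $\SO(\Lambda/\Lambda^\vee)$, obtained by applying the Deligne--Lusztig fixed-point theory (nonemptiness/finiteness of $X^{\bar g}$ governed by whether $\bar g$ is (regular) semisimple and lies in a Coxeter-type torus) stratum by stratum over the $g$-vertex lattices. Counting ``$g$-stable isotropic lines'' alone does not produce this factor.

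The real gaps are in parts (1) and (3). For (1), ruling out $\overline{\QQ}_p$-points only shows the intersection is supported on the special fiber; it does not show the structure sheaf is killed by $p$ (e.g.\ $\spec W/p^2$ has no characteristic-zero points). The paper instead proves that $\mathcal Z(\mathbf v(g))=\RZ_{L(g)^\vee}$ has no $W/p^2$-points \emph{and} that its special fiber is regular of the expected dimension, and then invokes \cite[Lemma 10.3]{RTZ}; both inputs require the crystalline description of $\oo$-points of special cycles (Theorem \ref{thm: MP}, the RZ analogue of Madapusi Pera's result), which your proposal does not supply. For (3), your plan to write down the ideals cutting out $\delta(\RZ^\flat)$ and $\RZ^g$ inside the formal deformation ring over $W$ and diagonalize them against the Jordan structure of $g$ is essentially the RTZ-style window/display computation --- precisely what is unavailable here because the GSpin space is of Hodge, not PEL, type and lacks a workable moduli description. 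The paper's route is to first establish reducedness of $\RZ_{L(g)^\vee}$ (Theorem B), which collapses the problem to computing the fixed-point scheme $S_\Lambda^{\bar g}$ of a smooth projective $k$-variety, done with explicit Grassmannian coordinates. Finally, the hypothesis $p>c$ is not about convergence of a formal exponential: it enters a purely commutative-algebra bootstrapping argument (via Krull's intersection theorem) upgrading the isomorphism $\mathcal S/\mathfrak m^p\cong k[X]/X^c$ to one for the full complete local ring. Without the reducedness statement and that final step, your argument does not reach the conclusion.
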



Along the way we also prove a result that should be of independent interest. In \cite{Howard2015}, Howard--Pappas define closed formal subschemes $\RZ_{\Lambda}$ of $\RZ$ for each \textit{vertex lattice} $\Lambda$ (recalled in \S \ref{sec:gspin-rapoport-zink}). 
Howard--Pappas study the reduced subscheme $\RZ_{\Lambda}^{\mathrm{red}}$ detailedly and prove that they form a nice stratification of $\RZ^{\mathrm {red}}$. We prove:
\begin{thmB}[Theorem \ref{thm:reducedminusculecyle}]\label{minor thm}
	$\RZ _{\Lambda} = \RZ_{\Lambda}^{\mathrm{red}}$ for each vertex lattice $\Lambda$. 
      \end{thmB}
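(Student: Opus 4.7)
The plan is to prove reducedness of $\RZ_\Lambda$ by a pointwise tangent-space comparison with the known smooth projective variety $\RZ_\Lambda^{\mathrm{red}}$. Since Howard--Pappas identify $\RZ_\Lambda^{\mathrm{red}}$ with a (smooth) subvariety of an orthogonal Grassmannian attached to the quadratic space $\Lambda^\vee/\Lambda \otimes k$ of known dimension, the closed immersion $\RZ_\Lambda^{\mathrm{red}} \hookrightarrow \RZ_\Lambda$ will be an isomorphism as soon as the Zariski tangent spaces agree at every $k$-point, which in turn forces the ideal of nilpotents cutting out $\RZ_\Lambda^{\mathrm{red}}$ inside $\RZ_\Lambda$ to vanish.

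First I would compute $T_x \RZ_\Lambda$ using the local model / Grothendieck--Messing description of the Hodge-type Rapoport--Zink space $\RZ$. Concretely, $T_x \RZ$ identifies with the subspace of $\Hom_k(\Fil^1 D,\, D/\Fil^1 D)$ cut out by the Clifford/spin Tate tensors defining $G = \GSpin(V)$, where $D$ denotes the reduction modulo $p$ of the Dieudonn\'e module at $x$. The defining condition of $\RZ_\Lambda$ is a lattice condition on the crystal relative to $\Lambda_W$ and $\Lambda_W^\vee$; infinitesimally, it translates to the requirement that the perturbed Hodge filtration still respect the vertex lattice, which via the $\GSpin$ dictionary becomes exactly the condition that the corresponding isotropic line in $\Lambda^\vee/\Lambda \otimes k$ remain isotropic to first order.

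Next I would match the tangent space so obtained with the tangent space to the orthogonal Grassmannian description of $\RZ_\Lambda^{\mathrm{red}}$ used by Howard--Pappas. Since the inclusion $T_x \RZ_\Lambda^{\mathrm{red}} \subseteq T_x \RZ_\Lambda$ is automatic and the two have equal dimension by the computation above, they coincide. Combined with the fact that the underlying topological spaces already agree and $\RZ_\Lambda^{\mathrm{red}}$ is smooth of the expected dimension at each $k$-point, this forces $\RZ_\Lambda = \RZ_\Lambda^{\mathrm{red}}$.

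The main obstacle is the bookkeeping in the first step: carefully transporting the lattice condition defining $\RZ_\Lambda$ through the Hodge embedding $G \hookrightarrow \GL(C(V))$ into a genuinely tensorial condition on the Hodge filtration, and then verifying via the Clifford-algebra Tate tensors that the resulting infinitesimal condition is exactly isotropy in $\Lambda^\vee/\Lambda \otimes k$, with no hidden nilpotent contribution from the spin representation. Once this dictionary is in hand, smoothness of the orthogonal Grassmannian side yields the theorem.
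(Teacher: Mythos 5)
Your tangent-space strategy reproduces one half of the paper's argument, but as stated it has a genuine gap: a comparison of tangent spaces at $k$-points (i.e.\ of $k[\epsilon]/\epsilon^2$-points) only sees the \emph{special fiber} $\RZ_\Lambda \times_{\Spf W}\spec k$. It can show that this special fiber is regular of dimension $t_\Lambda/2-1$ and hence coincides with the smooth variety $\RZ_\Lambda^{\mathrm{red}}$, but it cannot rule out that $\RZ_\Lambda$ has nontrivial thickenings in the arithmetic direction, i.e.\ that $\RZ_\Lambda$ fails to be a scheme of characteristic $p$. The toy example is $\Spf W$ itself: its unique $k$-point has zero relative tangent space, exactly as for its reduced subscheme $\spec k$, yet $\Spf W \neq \spec k$. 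So "tangent spaces agree at every $k$-point $\Rightarrow$ the nilpotent ideal vanishes" is false for formal schemes over $\Spf W$; it would only be valid if you already knew $\RZ_\Lambda$ were a $k$-scheme.

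The paper closes this gap with a separate input: Proposition \ref{prop:Wp2points} shows $\RZ_\Lambda(W/p^2)=\varnothing$, by applying the crystalline description of Theorem \ref{thm: MP} to the divided-power thickening $W/p^2 \to k$ and deriving a contradiction from the divisibility $\lprod{l,\lambda}\in p^2W$ for a generator $l$ of the lifted isotropic line. Combined with the regularity of the special fiber (Corollary \ref{cor:regularity}, proved by exactly the tangent-space computation you outline, via the isotropic-line/Grothendieck--Messing dictionary of Theorem \ref{thm: MP}), one invokes \cite[Lemma 10.3]{RTZ} to conclude that $\RZ_\Lambda$ equals its special fiber and is reduced. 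So your second and third steps are essentially the paper's Proposition \ref{prop: calc of dim of tgt} and Corollary \ref{cor:regularity}, but you must add an argument excluding $W/p^2$-points (or something equivalent) before the conclusion follows.
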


\subsection{Novelty of the method}
The results Theorem A and Theorem B are parallel to the results in \cite{RTZ} for unitary Rapoport--Zink spaces. The main new difficulty in the GSpin case is due to the fact that, unlike the unitary case, the GSpin Rapoport--Zink spaces are \textit{not} of PEL type. They are only of Hodge type, and as for now they lack full moduli interpretations that are easy to work with directly (see Remark \ref{rem:moduliinter}).

In \cite{RTZ}, the most difficult parts are the reducedness of minuscule special cycles \cite[Theorem 10.1]{RTZ} and the intersection length formula \cite[Theorem 9.5]{RTZ}. They are the analogues of Theorem B and Theorem A (3) respectively. In \cite{RTZ}, they are proved using Zink's theory of windows and displays of $p$-divisible groups and involve rather delicate linear algebra computation. In contrast, in our method we rarely directly work with $p$-divisible groups and we completely avoid computations with windows or displays. Instead we make use of what are essentially consequences of Kisin's construction of integral models of Hodge type Shimura varieties to \emph{abstractly} reduce the problem to algebraic geometry over $k$. More specifically, we reduce the intersection length computation to the study of a certain scheme of the form $S_\Lambda^{\bar g}$ (Proposition \ref{prop:nonreduced}), where $S_\Lambda$ is a smooth projective $k$-variety closely related to orthogonal Grassmannians, and $\bar g$ is a certain finite order automorphism of $S$.  Thus our method overcomes the difficulty of non-PEL type and also makes the actual computation much more elementary. 

It is worth mentioning that our method also applies to the unitary case considered in \cite{RTZ}. Even in this PEL type case, our method gives a new and arguably simpler proof of the arithmetic fundamental lemma in the minuscule case. This will be pursued in a forthcoming work.

It is also worth mentioning that the very recent work of Bueltel--Pappas \cite{Bueltel2017} gives a new moduli interpretation for  Rapoport--Zink spaces of Hodge type when restricted to $p$-nilpotent noetherian algebras. Their moduli description is purely group-theoretic (in terms of $(G,\mu)$-displays) and does not involve $p$-divisible groups. Although we do not use $(G,\mu)$-displays in this article, it would be interesting to see if it is possible to extend the results of this article using their group-theoretic description (e.g., to non-minuscule cases).

\subsection{Strategy of the proofs}

Our key observation is that in order to prove these theorems, we only need to understand $\mathcal{O}$-points of $\RZ$ for very special choices of $W$-algebras $\mathcal{O}$.

To prove Theorem B, it turns out that we only need to understand $\RZ(W/p^2)$ and $\RZ(k[\epsilon]/\epsilon^2)$. Note that the $W$-algebras $ W/p^2$ and $ k[\epsilon]/\epsilon^2$, when viewed as thickenings of $\spec k$ (under reduction modulo $p$ or $\epsilon$ respectively), are objects of the crystalline site of $\spec k$. For such an object $\oo$, we prove in Theorem \ref{thm: MP} an explicit description of $\RZ(\oo)$ and more generally an explicit description of $\mathcal Z(\oo) $, for any \textit{special cycle} $\mathcal Z$ in $\RZ$. Theorem \ref{thm: MP} is the main tool to prove Theorem B, and is also the only place we use $p$-divisible groups. This result is a Rapoport--Zink space analogue of a result of Madapusi Pera \cite[Proposition 5.16]{MPspin} for GSpin Shimura varieties. Its proof also relies on loc. cit. and is ultimately  a consequence of Kisin's construction of the integral canonical models of Hodge type Shimura varieties \cite{kisin2010integral}. 

To prove the intersection length formula Theorem A (3), let $\Lambda$ be the vertex lattice $L(g)^{\vee}$. Theorem B allows us to reduce Theorem A (3) to the problem of studying the fixed-point subscheme of  the smooth $k$-variety $S_\Lambda\cong p^\mathbb{Z}\backslash\RZ_{\Lambda}^{\mathrm{red}}$, under the induced action $\bar g\in \SO(\Lambda/\Lambda^\vee)$ of $g$. Since the fixed point of a smooth $k$-variety under a group of order coprime to $p$ is still smooth (\cite[1.3]{Iversen1972}), this point of view immediately explains that when $\bar g$ is semisimple (in which case $m(Q(T))=1$), the intersection multiplicity must be 1. More generally, we utilize Howard--Pappas's description of $S_\Lambda$ in \cite{Howard2015} and reduce the intersection length computation to elementary algebraic geometry of orthogonal Grassmannians over $k$ (Proposition \ref{structure of R} and Theorem \ref{main thm for multiplicity}).

The remaining parts of Theorem A are relatively easier. From Theorem B it is not difficult to deduce Theorem A (1). The set of $k$-points of $\RZ $ is well understood group theoretically in terms of the affine Deligne--Lusztig set. The point counting formula Theorem A (2) essentially only relies on this description, and we follow the strategy in \cite{RTZ} to give a short streamlined proof (Proposition \ref{prop:fixed-points}).



\subsection{Organization of the paper} In \S \ref{sec:gspin-rapoport-zink}, we review the structure of GSpin Rapoport--Zink spaces and special cycles. In \S \ref{sec:intersection-problem}, we formulate the arithmetic intersection problem of GGP cycles and prove the point-counting formula for the $k$-points of the intersection in the minuscule case (Theorem A (2)). In \S \ref{sec:reduc-minusc-spec}, we prove reducedness of minuscule special cycles (Theorem B). In \S \ref{sec:inters-length-form}, we deduce from Theorem B that the arithmetic intersection is concentrated in the special fiber (Theorem A (1)) and finally compute the intersection length when $p$ is sufficiently large (Theorem A (3)).

\subsection{Acknowledgments}  We are very grateful to B. Howard, M. Kisin, M. Rapoport and W. Zhang for helpful conversations or comments. Our debt to the two papers \cite{RTZ} and \cite{Howard2015} should be clear to the readers. 

\section{GSpin Rapoport--Zink spaces}\label{sec:gspin-rapoport-zink}

In this section we review the structure of GSpin Rapoport--Zink spaces due to Howard--Pappas \cite{Howard2015}. We refer to \cite{Howard2015} for the proofs of these facts.

\subsection{Quadratic spaces and GSpin groups}


Let $p$ be an odd prime. Let $(V,q)$ be a non-degenerate self-dual quadratic space over $\mathbb{Z}_p$ of rank $n\ge 3$. By definition the \emph{Clifford algebra} $C(V)$ is the quotient of the tensor algebra $V^{\otimes}$ by the two sided ideal generated by elements of the form $v \otimes v-q(v)$. It is free of rank $2^n$ over $\mathbb{Z}_p$. The linear map $v\mapsto -v$ preserves the quadratic form $q$ on $V$ and induces an involution on $C(V)$. This involution decomposes $C(V)=C^+(V) \oplus C^-(V)$ into even and odd parts. The image of the injection $V\hookrightarrow C^-(V)$ generates $C(V)$ as a $\ZZ_p$-algebra.

We also have a \emph{canonical involution} $*: C(V)\rightarrow C(V)$, which a $\mathbb{Z}_p$-linear endomorphism characterized by $(v_1v_2\cdots v_k)^*=v_k\cdots v_2v_1$ for $v_i\in V$. The \emph{spinor similitude group} $G=\GSpin(V)$ is the reductive group over $\mathbb{Z}_p$ such that for a $\mathbb{Z}_p$-algebra $R$, $$G(R)=\{g\in C^+(V)^\times: g V_R g^{-1}=V_R, \quad g^*g\in R^\times\}.$$ The character $\eta_G: G\rightarrow \mathbb{G}_m$ given by $g\mapsto g^*g$ is the called \emph{spinor similitude}.

The conjugation action  $g.v=gvg^{-1}$ of $G$ on $C(V)$  stabilizes $V$ and preserves the quadratic form $q$. This action thus defines a homomorphism $$G\rightarrow \SO(V).$$ The kernel of the above morphism is the central $\mathbb{G}_m$ inside $G$ given by the natural inclusion $R^{\times} \subset G(R)$ for any $\ZZ_p$-algebra $R$. The restriction of $\eta_G$ on the central $\mathbb{G}_m$ is given by $g\mapsto g^2$.
Note that the central $\GG_m$ in $G$ is equal to the identity component of the center of $G$, and it is equal to the center of $G$ precisely when $n$ is odd.

\subsection{Basic elements in GSpin groups}
\label{sec:basic-elements-gspin}
Let $k=\overline{\mathbb{F}}_p$, $W=W(k)$ and $K=W[1/p]$. Let $\sigma\in\Aut(W)$ be the lift of the absolute $p$-Frobenius on $k$.  Let $D=\Hom_{\mathbb{Z}_p}(C(V), \mathbb{Z}_p)$ be the contragredient $G$-representation of $C(V)$.

Any $b\in G(K)$ determines two isocrystals $$(V_K, \Phi=b\circ \sigma),\quad (D_K, F=b\circ \sigma).$$ Denote by $\mathbb T$ the pro-torus over $\QQ_p$ of character group $\QQ$. Recall that $b\in G(K)$ is \emph{basic} if its slope morphism $\nu_b: \mathbb{T}_K\rightarrow G_K$ factors through (the identity component) of $Z(G_K)$, i.e., factors through the central $\mathbb{G}_m$. By \cite[4.2.4]{Howard2015}, $b$ is basic if and only if $(V_K,\Phi)$ is isoclinic of slope 0, if and only if $(D_K, F)$ is isoclinic of slope $-\nu_b\in\Hom(\mathbb{T}_K,\mathbb{G}_m)\cong \mathbb{Q}.$ The map $b\mapsto \nu_b$ gives a bijection between the set of basic $\sigma$-conjugacy classes and the set $\frac{1}{2}\mathbb{Z}$. 
Moreover, the $\mathbb{Q}_p$-quadratic space $$V_K^\Phi=\{x\in V_K: \Phi x=x\}$$ has the same dimension and determinant as $V_{\mathbb{Q}_p}$, and has Hasse invariant $(-1)^{2\nu_b}$ (\cite[4.2.5]{Howard2015})).

\subsection{Local unramified Shimura--Hodge data}
\label{explicit choices}
 Since $V$ is self-dual, we know that $V_{\mathbb{Q}_p}$ has Hasse invariant $+1$. 
 In particular $V$ contains at least one hyperbolic plane 
 and  we can pick a $\mathbb{Z}_p$-basis $x_1,\ldots,x_n$ of $V$ such that the Gram matrix of the quadratic form $q$ is of the form $$
 \begin{pmatrix}
   0 & 1 & & &\\ 1& 0 & & & \\ & & * & & & \\ & & & * & & \\ & & & &\ddots & \\ & &  & & & *
 \end{pmatrix}
 $$  We will fix $x_1,\ldots, x_n$ once and for all. Define a cocharacter $$\mu: \mathbb{G}_m\rightarrow G, \quad t\mapsto t^{-1}x_1x_2+x_2x_1.$$  Pick an explicit element $b=x_3(p^{-1}x_1+x_2)\in G(\mathbb{Q}_p)$, then one can show that $b$ is basic with $\nu_b=\frac{1}{2}$. Thus $V_K^\Phi$ has the opposite Hasse invariant $-1$ (cf. \S \ref{sec:basic-elements-gspin}).

 Fix any $\delta\in C(V)^\times$ such that $\delta^*=-\delta$. Then $\psi_\delta(c_1,c_2)=\mathrm{Trd}(c_1\delta c_2^*)$ defines a non-degenerate symplectic form on $C(V)$, where $\mathrm{Trd}: C(V)\rightarrow \mathbb{Z}_p$ is the reduced trace. We have a closed immersion into the symplectic similitude group $$G\hookrightarrow \GSp(C(V), \psi_\delta).$$ By \cite[4.2.6]{Howard2015}, the tuple $(G, b, \mu, C(V))$ defines a \emph{local unramified Shimura--Hodge datum} (in the sense of \cite[2.2.4]{Howard2015}). In fact, for the fixed $G$ and $\mu$,  the $\sigma$-conjugacy class of $b$ is the unique basic $\sigma$-conjugacy class for which  $(G, b, \mu)$ is a local unramified Shimura--Hodge datum (cf. \cite[4.2.7]{Howard2015}).

 \begin{rem}
   The tuple $(G, b, \mu, C(V))$ is chosen in such a way that the associated Rapoport--Zink space (see below) provides a $p$-adic uniformization for the supersingular locus of a related $\Gspin$ Shimura variety. For more details on the relation with Shimura varieties see \cite[\S 7]{Howard2015}. 
 \end{rem}
 
\subsection{GSpin Rapoport--Zink spaces}\label{sec:gspin-rapoport-zink-spaces}
There is a unique (up to isomorphism) $p$-divisible group $\mathbb X_0/k$ such that its (contravariant) Dieudonn\'e module $\mathbb{D}(\mathbb{X}_0)$ is given by the $W$-lattice $D_W$ in the isocrystal $D_K$. The non-degenerate symplectic form $\psi_\delta$ induces a principal polarization $\lambda_0$ of $\mathbb{X}_0$.  Fix a collection of tensors $(s_\alpha)$ on $C(V)$ cutting out $G$ from $\GL(C(V))$ (including the symplectic form $\psi_\delta$). By \cite[4.2.7]{Howard2015}, we have a \emph{GSpin Rapoport--Zink space} $$\RZ:=\RZ(G, b,\mu, C(V), (s_\alpha)).$$ It is a formal scheme over $W$, together with a closed immersion into the symplectic Rapoport--Zink space $\RZ(\mathbb{X}_0,\lambda_0)$. Moreover, the formal scheme $\RZ$ itself depends only on the local unramified Shimura--Hodge datum $(G,b,\mu, C(V))$, and not on the choices of the tensors $(s_\alpha)$.

Denote by $(X, \rho ,\lambda)$ the universal triple over $\RZ(\mathbb X_0 , \lambda_0)$, where $X$ is the universal $p$-divisible group, $\rho$ is the universal quasi-isogeny, and $\lambda$ is the universal polarization. Consider the restriction of this triple to the closed formal subscheme $\RZ$ of $\RZ(\mathbb X_0 ,\lambda_0)$. We denote this last triple also by $(X,\rho ,\lambda)$ and call it the universal triple over $\RZ$.

\begin{rem}\label{rem:moduliinter}
Let $\nilp_W$ be the category of $W$-algebras in which $p$ is nilpotent. 
As a set-valued functor on the category $\nilp_W$, the symplectic Rapoport-Zink space $\RZ(\mathbb X_0  , \lambda_0)$ has an explicit moduli interpretation in terms of triples $(X,\rho,\lambda)$. In contrast, the subfunctor defined by $\RZ$ does not have an explicit description. In fact, in \cite{Howard2015} Howard--Pappas only give a moduli interpretation of $\RZ$ when it is viewed as a set-valued functor on a more restricted category $\anilp$. In this article we do not make use of this last moduli interpretation. All we will need is the \emph{global construction} of $\RZ$ as a formal subscheme of $\RZ(\mathbb X_0, \lambda_0)$ due to Howard--Pappas. 
\end{rem}


Over $\RZ$, the universal quasi-isogeny $\rho$ respects the polarizations $\lambda$ and $\lambda_0$ up to a scalar $c(\rho)\in \mathbb{Q}_p^\times$ , i.e., $\rho^\vee\circ \lambda\circ\rho=c^{-1}(\rho)\cdot \lambda_0$ (Zariski locally on $\RZ_k$). Let $\RZ^{(\ell)}\subseteq\RZ$ be the closed and open formal subscheme where $\ord_p(c(\rho))=\ell$. 
We have the decomposition into a disjoint union $$\RZ=\coprod_{\ell\in \mathbb{Z}}\RZ^{(\ell)}.$$ In fact each $\RZ^{(\ell)}$ is connected and they are mutually (non-canonically) isomorphic. cf. \cite[4.3.3, 4.3.4]{Howard2015}.

\subsection{The group $J_b$}

The algebraic group $J_b=\GSpin(V_K^\Phi)$ has $\mathbb{Q}_p$-points $$J_b(\mathbb{Q}_p)=\{g\in G(K): gb=b\sigma(g)\},$$ and $J_b(\QQ_p)$ acts on $\RZ$ via its action on $\mathbb X_0$ as quasi-endomorphisms. The action of $g \in J_b(\QQ_p)$ on $\RZ$ restricts to isomorphisms 
\begin{align}\label{g and l}
\RZ^{(\ell)} \isom \RZ^{(\ell + \ord_p (\eta_b (g)))}, ~ \ell \in \ZZ
\end{align}
 where $\eta_b : J_b(\QQ_p) \to \QQ_p^{\times}$ is the spinor similitude. In particular, $p^\mathbb{Z}\subseteq J_b(\mathbb{Q}_p)$ acts on $\RZ$ and since $\eta_b (p) = p^2$, we have an isomorphism $$p^\mathbb{Z}\backslash\RZ\cong \RZ^{(0)}\coprod\RZ^{(1)}.$$

\begin{rem}
	In this article we are interested in studying the fixed locus $\RZ^g$ of $\RZ$ under $g \in J_b (\QQ_p)$. By (\ref{g and l}) this is non-empty only when $\ord_p (\eta_b (g)) = 0 $. Since $p^{\ZZ}$ is central in $J_b(\QQ_p)$, one could also study $(p^{\ZZ}\backslash \RZ)^g$ for $g\in J_b(\QQ_p)$. However by (\ref{g and l}), we know that $(p^{\ZZ}\backslash \RZ)^g \neq \varnothing$ only if $\ord_p (\eta_b (g))$ is even, and in this case $$ (p^{\ZZ}\backslash \RZ)^g \cong p^{\ZZ} \backslash \RZ^{g_0},$$ where $g_0=p^{-\ord_p (\eta_b (g))/2} g$. Hence the study of $(p^{\ZZ}\backslash \RZ)^g$ for general $g$ reduces to the study of $\RZ^g$ for $g$ satisfying $\ord_p(\eta_b(g))=0$.
\end{rem}

\subsection{Special endomorphisms}\label{sec:spec-endom}

Using the injection $V\hookrightarrow C(V)^\mathrm{op}$, we can view $$V\subseteq \End_{\mathbb{Z}_p}(D)$$ as \emph{special endomorphisms} of $D$: the action of $v\in V$ on $D$ is explicitly given by $$(vd)(c)=d(vc),\quad d\in D, c\in C(V).$$ 
Base changing to $K$ gives $V_K\subseteq \End_K(D_K)$. Since the $F$-equivariant endomorphisms $\End_{K,F}(D_K)$ can be identified with the space of quasi-endomorphisms $\End^0(\mathbb X_0)$ of $\mathbb{X}_0$, we obtain an embedding of $\QQ_p$-vector spaces $$V_K^\Phi\hookrightarrow \End^0(\mathbb X_0).$$ Elements of $V_K^{\Phi}$ are thus viewed as quasi-endomorphisms of $\mathbb X_0$, and we call them \emph{special quasi-endomorphisms}.

\subsection{Vertex lattices}\label{sec:vertex-lattices}

\begin{defn}\label{def: vertex lattice}
A \emph{vertex lattice} is a $\mathbb{Z}_p$-lattice $\Lambda\subseteq V_K^\Phi$ such that $$p\Lambda\subseteq \Lambda^\vee\subseteq \Lambda.$$ We define $$\Omega_0=\Lambda/\Lambda^\vee.$$ Then the quadratic form $v\mapsto p\cdot q(v)$ makes $\Omega_0$ a non-degenerate quadratic space over $\mathbb{F}_p$. The \emph{type} of $\Lambda$ is defined to be $t_\Lambda:=\dim_{\mathbb{F}_p}\Omega_0.$
\end{defn}
By \cite[5.1.2]{Howard2015}, the type of a vertex lattice is always an \textit{even} integer such that $2\le t_\Lambda\le t_\mathrm{max}$ , where $$t_\mathrm{max}=
\begin{cases}
  n-2, & \text{if }n\text{ is even and }\det (V_{\mathbb{Q}_p})=(-1)^{n/2}\in \mathbb{Q}_p^\times/(\mathbb{Q}_p^\times)^2, \\
  n-1, & \text{if }n\text{ is odd}, \\
  n, & \text{if }n\text{ is even and }\det(V_{\mathbb{Q}_p})\ne(-1)^{n/2} \in \mathbb{Q}_p^\times/(\mathbb{Q}_p^\times)^2.
\end{cases}$$
It follows that the quadratic space $\Omega_0$ is always \emph{non-split}, because otherwise a Lagrangian subspace $\mathcal{L}\subseteq\Omega_0$ would provide a vertex lattice $\Lambda^\vee+\mathcal{L}\subseteq V_K^\Phi$ of type 0 (cf. \cite[5.3.1]{Howard2015}) 

\subsection{The variety $S_\Lambda$}
\label{sec:variety-s_lambda}

\begin{defn}
  Define $$\Omega=\Omega_0 \otimes_{\mathbb{F}_p} k\cong\Lambda_W/\Lambda_W^\vee.$$ Let $d=t_\Lambda/2$. Let $\OGr(\Omega)$ be the moduli space of Lagrangian subspaces $\mathcal{L}\subseteq \Omega$. We define $S_\Lambda\subseteq \OGr(\Omega)$ to be the reduced closed subscheme of $\OGr(\Omega)$ with $k$-points given as follows:
\begin{align*}
  S_\Lambda(k)&=\{\text{Lagrangian subspaces } \mathcal{L}\subseteq \Omega: \dim(\mathcal{L}+\Phi(\mathcal{L}))=d+1\} \\
  &\cong \{(\mathcal{L}_{d-1},\mathcal{L}_d): \mathcal{L}_d \subseteq \Omega \text{ Lagrangian}, \mathcal{L}_{d-1}\subseteq \mathcal{L}_d\cap\Phi\mathcal{L}_d,\dim \mathcal{L}_{d-1}=d-1\},
\end{align*}
where the last bijection is given by $\mathcal L \mapsto (\mathcal L \cap \Phi \mathcal L, \mathcal L)$.
\end{defn}

More precisely, for any $k$-algebra $R$, the $R$-points $S_{\Lambda} (R)$ is the set of pairs $(\LL_{d-1}, \LL_d)$ such that:
\begin{itemize}
\item $\LL_d$ is a totally isotropic $R$-submodule of $\Omega\otimes_k R$ that is an $R$-module local direct summand of $\Omega \otimes_k R$ and of local rank $d$,
\item $\LL_{d-1}$ is an $R$-module local direct summand of $\Omega \otimes_k R$ and of local rank $d-1$,
\item  $\LL_{d-1} \subset \LL_d \cap \Phi \LL_d$, where $\Phi$ acts on $\Omega\otimes_k R$ via the $p$-Frobenius on $R$. In particular, $\LL_{d-1}$ is totally isotropic, and is a local direct summand of $\LL_{d}$ and of $\Phi \LL_d$. (For the last statement see Remark \ref{rem: direct summand} below.)
\end{itemize}

By \cite[5.3.2]{Howard2015}, $S_\Lambda$ is a $k$-variety with two isomorphic connected components $S_\Lambda^{\pm}$, each being projective and smooth of dimension $t_\Lambda/2-1$. 
For more details, see \cite[\S 5.3]{Howard2015} and \cite[\S 3.2]{HPGU22}.

\begin{rem}\label{rem: direct summand}
	In the sequel we will frequently use the following simple fact without explicitly mentioning it. Let $R$ be a commutative ring and $M$ a free $R$-module of finite rank. Suppose $M_1, M_2$ are submodules of $M$ that are local direct summands. Suppose $M_1 \subset M_2$. Then $M_1$ is a local direct summand of $M_2$, and both $M_1$ and $M_2$ are locally free.   
\end{rem}

\subsection{Structure of the reduced scheme $\RZ^\mathrm{red}$}\label{sec:str}

\begin{defn}\label{def:RZLambda}
 For a vertex lattice $\Lambda$, we define $\RZ_\Lambda\subseteq \RZ$ to be locus where $\rho\circ\Lambda^\vee\circ\rho^{-1}\subseteq \End(X)$, i.e. the quasi-endomorphisms $\rho\circ v\circ\rho^{-1}$ lift to actual endomorphisms for any $v\in \Lambda^\vee$. In other words, if we define a locus $\RZ(\mathbb X_0,\lambda_0)_{\Lambda}$ using the same condition inside $\RZ(\mathbb X_0, \lambda_0)$ (a closed formal subscheme by \cite[Proposition 2.9]{RZ96}), then $\RZ_{\Lambda}$ is the intersection of $\RZ$ with $\RZ(\mathbb X_0,\lambda_0)_\Lambda$ inside $\RZ(\mathbb{X}_0,\lambda_0)$. In particular, $\RZ_{\Lambda} $ is a closed formal subscheme of $\RZ$. 
\end{defn}

  Consider the reduced subscheme $\RZ^{(\ell) , \mathrm{red}}$ of $\RZ^{(\ell)}$. 
By the result \cite[6.4.1]{Howard2015}, the irreducible components of $\RZ^{(\ell),\mathrm{red}}$ are precisely $\RZ_\Lambda^{(\ell),\mathrm{red}}$, where $\Lambda$ runs through the vertex lattices of the maximal type $t_\Lambda=t_\mathrm{max}$. Moreover, there is an isomorphism of $k$-schemes (\cite[6.3.1]{Howard2015})
\begin{equation}
  \label{eq:rzlambda}
  p^\mathbb{Z}\backslash\RZ_\Lambda^{\mathrm{red}}\isom S_\Lambda,  
\end{equation}
 which also induces an isomorphism between $\RZ_\Lambda^{(\ell),\mathrm{red}}$ and $S_\Lambda^\pm$, for each $\ell \in \ZZ$. In particular, $\RZ^\mathrm{red}$ is equidimensional of dimension $t_\mathrm{max}/2-1$.

\subsection{The Bruhat--Tits stratification}\label{sec:bruh-tits-strat} For any vertex lattices $\Lambda_1$ and $\Lambda_2$, the intersection $\RZ_{\Lambda_1}^\mathrm{red}\cap \RZ_{\Lambda_2}^\mathrm{red}$ is nonempty if and only if $\Lambda_1\cap \Lambda_2$ is also a vertex lattice, in which case it is equal to $\RZ_{\Lambda_1\cap \Lambda_2}^\mathrm{red}$ (\cite[6.2.4]{Howard2015}). In this way we obtain a \emph{Bruhat--Tits stratification} on $\RZ^\mathrm{red}$. Associated to a vertex lattice $\Lambda$, we define an open subscheme of $\RZ_\Lambda^\mathrm{red}$ given by $$\BT_\Lambda=\RZ^\mathrm{red}_\Lambda-\bigcup_{\Lambda'\subsetneq \Lambda}\RZ_{\Lambda'}^\mathrm{red}.$$  Then $$\RZ^\mathrm{red}=\coprod_{\Lambda}\BT_\Lambda$$ is a disjoint union of locally closed subschemes, indexed by all vertex lattices.

\subsection{Special lattices} One can further parametrize the $k$-points in each $\RZ_\Lambda$ using special lattices.

\begin{defn}
  We say a $W$-lattice $L\subseteq V_K$ is a \emph{special lattice} if $L$ is self-dual and $(L+\Phi(L))/L\cong W/pW$. 
\end{defn}

We have a bijection (\cite[6.2.2]{Howard2015}) \begin{equation}
  \label{eq:RZk}
p^\mathbb{Z} \backslash \RZ(k)\isom\{\text{special lattices } L\subseteq V_K\}.  
\end{equation}
 To construct this bijection, one uses the fact (\cite[3.2.3]{Howard2015}) that $p^\mathbb{Z} \backslash \RZ(k)$ can be identified with the affine Deligne--Lusztig set
 \begin{equation}
   \label{eq:afdl}
   X_{G,b,\mu^\sigma}(k)=\{g\in G(K): g^{-1}b \sigma(g)\in G(W)\mu^\sigma(p)G(W)\}/G(W).
 \end{equation}
The special lattice associated to $g\in G(K)$ is then given by $g\mu(p^{-1}).V_W\subseteq V_K$. Conversely, given a special lattice $L\subseteq V_K$, then there exists some $g\in G(K)$ such that $g\mu(p^{-1}). V_W=L$ and $g. V_W=\Phi(L)$. The point in $\RZ(k)$ then corresponds to the image of $g$ in $X_{G,b,\mu^\sigma}(k)$. The Dieudonn\'e module of the $p$-divisible group at this point is given by $M=g D_W\subseteq D_K$ and the image of Verschiebung is $(F^{-1} p) M=g\cdot p\mu(p^{-1}) D_W$.
\begin{rem}\label{moduli inter of L}
	Suppose $x_0\in \RZ(k)$ corresponds to the special lattice $L$ under (\ref{eq:RZk}). Let $M=\mathbb{D}(X_0) \subset D_K$ be the Dieudonn\'e module of the $p$-divisible group $X_0$ corresponding to $x_0$. 
        Then we have (cf. \cite[\S 6.2]{Howard2015}) $$L = \set{v\in V_K | v (F^{-1}p)M \subset (F^{-1}p) M },\quad
	\Phi L = \set{v\in V_K| v M \subset M}.$$ Here we view $V_K \subset \End_K(D_K)$ as in \S \ref{sec:spec-endom}.
\end{rem}

\subsection{Special lattices and vertex lattices}\label{sec:spec-latt-vert}

For any vertex lattice $\Lambda$, the bijection (\ref{eq:RZk}) induces a bijection 
\begin{align}\label{special lattices in RZLambda}
p^\mathbb{Z} \backslash \RZ_\Lambda(k)\isom\{\text{special lattices } L\subseteq V_K: \Lambda_W^\vee \subseteq L \subseteq \Lambda_W\} = \{\text{special lattices } L\subseteq V_K: \Lambda_W^\vee \subseteq L \}
\end{align} Sending a special lattice $L$ to $\mathcal{L}: =L/\Lambda_W^\vee\subseteq \Omega$ gives a bijection between the right hand side of (\ref{special lattices in RZLambda}) and $S_\Lambda(k)$, which is the effect of the isomorphism (\ref{eq:rzlambda}) on $k$-points.

\begin{defn}
For each special lattice $L\subseteq V_K$, there is a unique minimal vertex lattice $\Lambda(L)\subseteq V_K^\Phi$ such that $$\Lambda(L)_W^\vee\subseteq L\subseteq \Lambda(L)_W.$$ In fact, let $L^{(r)}=L+\Phi(L)+\cdots+\Phi^r(L)$. Then there exists a unique integer $1\le d\le t_\mathrm{max}/2$ such that $L^{(i)}\subsetneq L^{(i+1)}$ for $i<d$, and $L^{(d)}=L^{(d+1)}$. Then $L^{(i+1)}/L^{(i)}$ all have $W$-length 1 for $i<d$, and $$\Lambda(L):=(L^{(d)})^\Phi\subseteq V_K^\Phi$$ is a vertex lattice of type $2d$ and $\Lambda(L)^\vee=L^\Phi$.  
\end{defn}
 Notice that $\Lambda(L)_W$ is the smallest $\Phi$-invariant lattice containing $L$ and $\Lambda(L)^\vee_W$ is the largest $\Phi$-invariant lattice contained in $L$. It follows that the element of $\RZ(k)$ corresponding to a special lattice $L$ lies in $\RZ_\Lambda$ if and only if $\Lambda(L)\subseteq\Lambda$, and it lies in $\BT_\Lambda$ if and only if $\Lambda(L)=\Lambda$. Thus we have the bijection
\begin{equation}
  \label{eq:bt}
p^\mathbb{Z}\backslash\BT_\Lambda(k)\isom\{L \text{ special lattices}: \Lambda(L)=\Lambda\}.
\end{equation}

\subsection{Deligne--Lusztig varieties}\label{sec:deligne-luszt-vari}

For any vertex lattice $\Lambda$, by \cite[6.5.6]{Howard2015},  $p^\mathbb{Z}\backslash\BT_\Lambda$ is a smooth quasi-projective variety of dimension $t_\Lambda/2-1$, isomorphic to a disjoint union of two Deligne--Lusztig varieties $X_B(w^\pm)$ associated to two Coxeter elements $w^\pm$ in the Weyl group of $\SO(\Omega_0)$. Here $\Omega_0 := \Lambda/\Lambda^{\vee}$ is the quadratic space over $\FF_p$ defined in Definition \ref{def: vertex lattice}. In particular, the $k$-variety $p^\mathbb{Z}\backslash\BT_\Lambda$ only depends on the quadratic space $\Omega_0$.

Let us recall the definition of $X_B(w^\pm)$. Let $d=t_\Lambda/2$. Let $\lprod{\cdot , \cdot}$ be the bilinear pairing on $\Omega_0$. Since $\Omega_0$ is a non-degenerate non-split quadratic space over $\mathbb{F}_p$ (\S \ref{sec:vertex-lattices}), one can choose a basis  $e_1,\ldots, e_d, f_d,\ldots, f_1$ of $\Omega$ such that $\langle e_i,f_i\rangle=1$ and all other pairings between the basis vectors are 0, and $\Phi$ fixes $e_i,f_i$ for $i=1,\ldots,d-1$ and interchanges $e_d$ with $f_d$. This choice of basis gives a maximal $\Phi$-stable torus $T\subseteq \SO(\Omega)$ (diagonal under this basis), and a $\Phi$-stable Borel subgroup $B\supseteq T$ as the common stabilizer of the two complete isotropic flags $$\mathcal{F}^\pm: \langle e_1\rangle\subseteq \langle e_1,e_2\rangle\subseteq \cdots\subseteq\langle e_1, \ldots,e_{d-1}, e_d^\pm\rangle,$$ where $e_d^+:=e_d$ and $e_d^-:=f_d$. Let $s_i$ ($i=1,\ldots,d-2$) be the reflection $e_i\leftrightarrow e_{i+1}$, $f_i\leftrightarrow f_{i+1}$ and let $t^\pm$ be the reflection $e_{d-1}\leftrightarrow e_d^\pm$, $f_{d-1}\leftrightarrow e_d^\mp$. Then the Weyl group $W(T)=N(T)/T$ is generated by $s_1, \cdots, s_{d-2}, t^+, t^-$. We also know that $W(T)$ sits in a split exact sequence $$0\rightarrow (\mathbb{Z}/2 \mathbb{Z})^{d-1}\rightarrow W(T)\rightarrow S_d\rightarrow0.$$ Since $\Phi$ fixes $s_i$ and swaps $t^+$ and $t^-$, we know the $d-1$ elements $s_1,\ldots,s_{d-2}, t^+$ (resp. $s_1, \ldots, s_{d-2}, t^-$) form a set of representatives of $\Phi$-orbits of the simple reflections. Therefore $$w^\pm:= t^\mp s_{d-2}\cdots s_2s_1\in W(T)$$ are Coxeter elements of minimal length. The \emph{Deligne--Lusztig variety} associated to $B$ and the Coxeter element $w^\pm$ is defined to be $$X_B(w^\pm):=\{g\in \SO(\Omega)/B: \inv(g, \Phi(g))=w^\pm\},$$ where $\inv(g,h)\in  B\backslash \SO(\Omega)/B\cong W(T)$ is the relative position between the two Borels $gBg^{-1}$ and $hBh^{-1}$. The variety $X_B(w^\pm)$ has dimension $d-1$. Under the map $g\mapsto g\mathcal{F}^\pm$, the disjoint union $X_B(w^+)\coprod X_B(w^-)$ can be identified with the variety of complete isotropic flags $$\mathcal{F}: \mathcal{F}_1\subseteq \mathcal{F}_2\subseteq\cdots\subseteq \mathcal{F}_d$$ such that $\mathcal{F}_i=\mathcal{F}_{i-1}+\Phi(\mathcal{F}_{i-1})$ and $\dim_k( \mathcal{F}_d+\Phi(\mathcal{F}_d))=d+1$. The two components are interchanged by an orthogonal transformation of determinant $-1$. Notice that such $\mathcal{F}$ is determined by the isotropic line $\mathcal{F}_1$ by $$\mathcal{F}_i=\mathcal{F}_1+\Phi(F_1)\cdots+\Phi^{i-1}(\mathcal{F}_1),$$ and is also determined by the Lagrangian $\mathcal{F}_d$ by $$\mathcal{F}_i=\mathcal{F}_d\cap \Phi(\mathcal{F}_d)\cap\dots\cap\Phi^{d-i}(\mathcal{F}_d).$$

The bijection (\ref{eq:bt}) induces a bijection 
\begin{equation}
  \label{eq:btdlkpoints}
  p^\mathbb{Z}\backslash\BT_\Lambda(k)\isom X_B(w^+)(k)\coprod X_B(w^-)(k)
\end{equation}
by sending a special lattice $L$ with $\Lambda(L)=\Lambda$ to the flag determined by the Lagrangian $\mathcal{F}_d=L/\Lambda^\vee_W$. This bijection is the restriction of the isomorphism (\ref{eq:rzlambda}) on $k$-points and we obtain the desired isomorphism 
\begin{equation}
  \label{eq:btdl}
  p^\mathbb{Z}\backslash\BT_\Lambda\cong X_B(w^+)\coprod X_B(w^-).
\end{equation}

\subsection{Special cycles}


\begin{defn}\label{def:specialcycle}
  For an $m$-tuple $\mathbf{v}=(v_1,\ldots,v_m)$ of vectors in $V_K^\Phi$, define its \emph{fundamental matrix} $T(\mathbf{v})=(\langle v_i, v_j\rangle)_{i,j=1,\ldots, m}$. We define the \emph{special cycle} $\mathcal{Z}(\mathbf{v})\subseteq \RZ$ to be the locus where $\rho\circ v_i\circ \rho^{-1}\in\End(X)$, i.e., all the quasi-endomorphisms $\rho\circ v_i\circ \rho^{-1}$ lift to actual endomorphisms on $X$ ($i=1,\ldots,m$). Similar to Definition \ref{def:RZLambda}, $\mathcal Z(\mathbf v)$ is a closed formal subscheme of $\RZ$, which is the intersection $\RZ$ with the analogously defined cycle inside $\RZ(\mathbb{X}_0,\lambda_0)$. Since $\mathcal Z(\mathbf v)$ only depends on the $\ZZ_p$-submodule $ \mathrm{span}_{\ZZ_p} (\mathbf v)$ of $V_K^{\Phi}$, we also write $\mathcal Z( \mathrm{span}_{\ZZ_p} (\mathbf v) )$.  
\end{defn}
\begin{rem}\label{rem: v and L}
	Let $x_0 \in \RZ(k)$ correspond to $L $ under (\ref{eq:RZk}). Let $\mathbf v$ be an arbitrary $\ZZ_p$-submodule of $V_K^{\Phi}$. By Remark \ref{moduli inter of L} we know that $x_0 \in \mathcal Z(\mathbf v)$ if and only if $\mathbf v \subset \Phi L$, if and only if $\mathbf v \subset \Phi L \cap L$ (as $\mathbf v$ is $\Phi$-invariant). 
\end{rem}



\begin{defn}\label{def:minuscule}
    When $m=n$ and $T(\mathbf{v})$ is non-singular, we obtain a lattice $$L(\mathbf{v})=\mathbb{Z}_p v_1+\cdots \mathbb{Z}_p v_n\subseteq V_K^\Phi.$$ By the Cartan decomposition, $T(\mathbf v) \in \GL_n (\ZZ_p) \diag (p^{r_1}, p^{r_2},\cdots, p^{r_n}) \GL_n(\ZZ_p)$ for a unique non-increasing sequence of integers $r_1\ge \cdots\ge r_n$. Note that if we view the matrix $T(\mathbf v)^{-1}$ as a linear operator $V_K^{\Phi} \to V_K^{\Phi}$ using the basis $\mathbf v$, it sends $\mathbf v$ to the dual basis of $\mathbf v$, and in particular it sends any $\ZZ_p$-basis of $L(\mathbf v) $ to a $\ZZ_p$-basis of $ L(\mathbf v) ^{\vee} $. Therefore the tuple $(r_1,\cdots, r_n)$ is characterized by the condition that there is a basis $e_1,\ldots,e_n$ of $L(\mathbf{v})$ such that $p^{-r_1}e_1,\ldots, p^{-r_n}e_n$ form a basis of $L(\mathbf{v})^\vee$. From this characterization we also see that the tuple $(r_1,\cdots, r_n)$ is an invariant only depending on the lattice $L(\mathbf{v})$. We say $\mathbf{v}$ is \emph{minuscule} if $T(\mathbf v)$ is non-singular and $r_1=1, ~ r_n\ge0$. 
\end{defn}

\begin{rem}\label{rem:minsculespecialcyle}
	Suppose $m=n$ and $T(\mathbf v)$ is non-singular. Then $\mathbf v$ is minuscule if and only if $L(\mathbf{ v}) ^{\vee}$ is a vertex lattice. In this case by definition $\mathcal Z(\mathbf v) = \RZ_{L(\mathbf v) ^{\vee}}$. 
\end{rem}



\section{The intersection problem and the point-counting formula}\label{sec:intersection-problem}

\subsection{The GSpin Rapoport--Zink subspace}

From now on we assume $n\ge4$. Suppose the last basis vector $x_n\in V$ has norm 1. Then the quadratic subspace of dimension $n-1$ $$V^\flat=\mathbb{Z}_px_1+\cdots \mathbb{Z}_px_{n-1}$$ is also self-dual. Let $G^\flat=\GSpin(V^\flat)$. Analogously we define the element $$b^\flat=x_3(p^{-1}x_1+x_2)\in G^\flat(\mathbb{Q}_p)$$ and the cocharacter $$\mu^\flat: \mathbb{G}_m\rightarrow G^\flat,\quad t\mapsto t^{-1}x_1x_2+x_2x_1.$$ As in \S \ref{sec:gspin-rapoport-zink-spaces}, we have an associated GSpin Rapoport--Zink space $$\RZ^\flat=\RZ(G^\flat, b^\flat, \mu^\flat, C(V^\flat)).$$ The embedding $V^\flat\hookrightarrow V$ induces an embedding of Clifford algebras $C(V^\flat)\hookrightarrow C(V)$ and a closed embedding of group schemes $G^\flat\hookrightarrow G$ over $\mathbb{Z}_p$, which maps $b^\flat$ to $b$ and $\mu^\flat$ to $\mu$. Thus by the functoriality of Rapoport--Zink spaces (\cite[4.9.6]{Kim2013}), we have a closed immersion  $$\delta: \RZ^\flat\hookrightarrow \RZ$$ of formal schemes over $W$.

\subsection{Relation with the special divisor $\mathcal{Z}(x_n)$}

\label{sec:relat-with-spec}

For compatible choices of symplectic forms $\psi^\flat$ on $C(V^\flat)$ and $\psi$ on $C(V)$, the closed embedding of group schemes $\GSp(C(V^\flat),\psi^\flat)\hookrightarrow \GSp(C(V),\psi)$ induces a closed immersion of symplectic Rapoport--Zink spaces (\S \ref{sec:gspin-rapoport-zink-spaces}) $$\phi: \RZ(\mathbb{X}_0^\flat,\lambda_0^\flat)\hookrightarrow\RZ(\mathbb{X}_0, \lambda_0).$$ Since we have a decomposition of $\GSp(C(V^\flat),\psi^\flat)$-representations $$C(V)\cong C(V^\flat) \oplus C(V^\flat) x_n,$$ we know the moduli interpretation of $\phi$ is given by sending a triple  $(X^\flat,\rho^\flat,\lambda^\flat)$ to the $p$-divisible group $X=X^\flat \oplus X^\flat$ with the quasi-isogeny $\rho=\rho^\flat \oplus \rho^\flat$ and polarization $\lambda=\lambda^\flat \oplus \lambda^\flat$.

By the functoriality of Rapoport--Zink spaces (\cite[4.9.6]{Kim2013}), we have a commutative diagram of closed immersions
\begin{equation}
  \label{eq:funcotrialityRZ}
  \xymatrix{\RZ^\flat  \ar@{^(->}[r]^{\delta} \ar@{^(->}[d] &  \RZ \ar@{^(->}[d]\\ \RZ(\mathbb{X}_0^\flat,\lambda_0^\flat) \ar@{^(->}[r]^{\phi}& \RZ(\mathbb{X}_0, \lambda_0).}
\end{equation}
Here the two vertical arrows are induced by the closed immersions $\GSpin(V^\flat)\hookrightarrow \GSp(C(V^\flat),\psi^\flat)$ and  $\GSpin(V)\hookrightarrow \GSp(C(V),\psi)$ (\S \ref{sec:gspin-rapoport-zink-spaces}).

\begin{lem}
   Diagram (\ref{eq:funcotrialityRZ}) is Cartesian, i.e., we have
 \begin{equation}
   \label{eq:cartesian}
   \delta(\RZ^\flat)=\phi(\RZ(\mathbb{X}_0^\flat,\lambda_0^\flat))\cap\RZ
 \end{equation}
 inside $\RZ(\mathbb{X}_0,\lambda_0)$.
\end{lem}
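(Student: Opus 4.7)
The inclusion $\delta(\RZ^\flat) \subseteq \phi(\RZ(\mathbb{X}_0^\flat,\lambda_0^\flat)) \cap \RZ$ is immediate from the commutativity of \eqref{eq:funcotrialityRZ}, so the content of the lemma is the reverse inclusion. My plan is to exploit the intrinsic characterization of the GSpin Rapoport--Zink spaces as closed formal subschemes of their symplectic ambient spaces cut out by the preservation of crystalline tensors, following Kim \cite{Kim2013} and Howard--Pappas \cite{Howard2015}, and then to reduce the claim to a purely algebraic compatibility between the $G$- and $G^\flat$-tensors.

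The algebraic input I would use is the $G^\flat$-equivariant decomposition $C(V) = C(V^\flat) \oplus C(V^\flat) x_n$ under the left-multiplication action (a decomposition that is \emph{not} stable under $G$). This yields a canonical $G^\flat$-equivariant projector $\pi: C(V) \twoheadrightarrow C(V^\flat)$, and one can arrange the tensors $(s_\alpha)$ cutting $G$ out of $\GL(C(V))$ so that restriction along $\pi$ produces a collection of tensors $(s^\flat_\beta)$ on $C(V^\flat)$ cutting $G^\flat$ out of $\GL(C(V^\flat))$. The main deduction then proceeds as follows. Take a $T$-point of $\RZ(\mathbb X_0^\flat, \lambda_0^\flat)$ classifying $(X^\flat, \rho^\flat, \lambda^\flat)$ whose image under $\phi$ lies in $\RZ$. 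The image classifies $(X^\flat \oplus X^\flat, \rho^\flat \oplus \rho^\flat, \lambda^\flat \oplus \lambda^\flat)$, so the associated Dieudonn\'e crystal is $\mathbb{D}(X^\flat)^{\oplus 2}$, with direct sum structure inherited from $C(V) = C(V^\flat) \oplus C(V^\flat) x_n$ at the framing object $\mathbb X_0$. The hypothesis that the image lies in $\RZ$ translates the tensors $(s_\alpha)$ into parallel, Frobenius-invariant, Hodge-filtration-compatible tensors on this crystal. Applying the projector $\pi$ then produces the tensors $(s^\flat_\beta)$ on $\mathbb{D}(X^\flat)$ with the analogous properties, certifying that the original $T$-point lies in $\RZ^\flat$.

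The principal subtlety is to verify that the direct sum decomposition of the crystal is genuinely respected by Frobenius and by the Hodge filtration at the $T$-point in question, so that $\pi$ really acts on the crystalline data in a structure-preserving way. This is, however, forced by the assumption that the point factors through $\phi$: the decomposition originates at the level of $p$-divisible groups as $X = X^\flat \oplus X^\flat$, with matching decompositions of quasi-isogeny and polarization, and Grothendieck--Messing theory then propagates it functorially to the crystal and its Hodge filtration. Once this compatibility is granted, the Cartesian property follows.
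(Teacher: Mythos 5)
Your overall strategy has a genuine gap at its foundation: it presupposes that membership of an arbitrary $T$-point of $\RZ(\mathbb X_0,\lambda_0)$ in the closed formal subscheme $\RZ$ can be tested by the existence of suitable crystalline tensors on the Dieudonn\'e crystal over $T$. No such moduli description of $\RZ$ is available on all of $\nilp_W$; as recalled in Remark \ref{rem:moduliinter}, Howard--Pappas only describe the functor of points of $\RZ$ on a restricted category of formally smooth test objects, and even that description is deliberately not used in this paper. To prove an equality of closed formal subschemes one must compare them on enough test objects (e.g.\ all artinian local $W$-algebras with residue field $k$), and these are not formally smooth; so the step ``the hypothesis that the image lies in $\RZ$ translates the tensors $(s_\alpha)$ into parallel, Frobenius-invariant, filtration-compatible tensors on the crystal'' is not justified for the points you actually need to consider. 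A secondary unproven step is the claim that the $(s_\alpha)$ can be arranged so that ``restriction along $\pi$'' yields tensors cutting $G^\flat$ out of $\GL(C(V^\flat))$: the $s_\alpha$ live in the tensor algebra of $C(V)\oplus C(V)^\vee$, there is no canonical way to restrict such tensors along a projector, and it is not clear the result would cut out exactly $G^\flat$ rather than a larger group.

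The paper's proof avoids any moduli description. By flat descent, two closed formal subschemes of $\RZ(\mathbb X_0,\lambda_0)$ coincide once they have the same $k$-points and the same formal completions at every $k$-point. Both of these admit purely group-theoretic descriptions --- the $k$-points via the affine Deligne--Lusztig sets \eqref{eq:afdl} for $G^\flat$, $H=\GSp(C(V^\flat),\psi^\flat)$ and $G$, and the formal completions at a $k$-point $x$ via the formal groups $U^{\mu_x,\wedge}$ of opposite unipotent radicals (\cite[3.2.12]{Howard2015}) --- and the single identity $G^\flat=H\cap G$ inside $\GL(C(V))$ then gives both comparisons at once. That group-theoretic identity is the honest algebraic content behind your projector $\pi$; to make your argument work you would still need to route the deduction through $k$-points and formal neighborhoods rather than arbitrary $T$-points.
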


\begin{proof}
By flat descent, to show that the closed formal subschemes on the two sides of \eqref{eq:cartesian} agree, it suffices to show that they have the same $k$-points and the same formal completion at every $k$-point (cf. \cite[5.2.7]{Bueltel2017}). The claim then follows from the observation that both the $k$-points and the formal completions have purely group theoretic description. 

In fact, the $k$-points of $\RZ^\flat=\RZ_{G^\flat}$, $\RZ(\mathbb{X}_0^\flat,\lambda_0^\flat)=\RZ_H$ and $\RZ=\RZ_G$ have the group theoretic description as the affine Deligne--Lusztig sets \eqref{eq:afdl} associated to the groups $G^\flat=\GSpin(V^\flat)$, $H=\GSp(C(V^\flat), \psi^\flat)$ and $G=\GSpin(V)$ respectively. Since $G^\flat=H\cap G$ inside $\GL(C(V))$, we know that both sides of \eqref{eq:cartesian} have the same $k$-points. Fix a $k$-point $x\in\RZ^\flat(k)$, then by \cite[3.2.12]{Howard2015}, $\widehat\RZ_{G^\flat,x}$ can be identified with $U_{G^\flat}^{\mu_x,\wedge}$, where $\mu_x: \mathbb{G}_{m,W}\rightarrow G^\flat_W$ gives a filtration that lifts the Hodge filtration for $x$, $U_{G^\flat}^{\mu_x}\subseteq G^\flat$ is the unipotent radical of the opposite parabolic group defined by $\mu_x$ (\cite[3.1.6]{Howard2015}) and  $U_{G^\flat}^{\mu_x,\wedge}$ is its formal completion along its identity section over $W$. Similarly, we can identify $\widehat{\RZ}_{H,x}$ and $\widehat{\RZ}_{G,x}$ as $U_{H}^{\mu_x,\wedge}$ and $U_{G}^{\mu_x,\wedge}$. Again because $G^\flat=H\cap G$, we know that the formal completions at $x$ of both sides of \eqref{eq:cartesian} agree inside $U_{\GL(C(V))}^{\mu_x,\wedge}$.
\end{proof}

\begin{lem}\label{lem:specialdivisor}
  $\delta(\RZ^\flat)=\mathcal{Z}(x_n)$.
\end{lem}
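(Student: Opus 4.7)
My plan is to use the previous lemma's Cartesian diagram, which gives $\delta(\RZ^\flat) = \phi(\RZ(\mathbb X_0^\flat, \lambda_0^\flat)) \cap \RZ$, to reduce the problem to showing $\mathcal Z(x_n) = \phi(\RZ(\mathbb X_0^\flat, \lambda_0^\flat)) \cap \RZ$ inside $\RZ$. The key preliminary step is to unwind the action of the quasi-endomorphism $x_n$ in terms of the doubling map $\phi$. Using the Clifford algebra decomposition $C(V) = C(V^\flat) \oplus C(V^\flat) x_n$ and dualizing to $D$, one checks that under the identification $\mathbb X_0 = \mathbb X_0^\flat \oplus \mathbb X_0^\flat$ induced by $\phi$, the operator $x_n$ acts as the matrix $\begin{pmatrix} 0 & \iota \\ \iota & 0 \end{pmatrix}$, where $\iota \in \End(\mathbb X_0^\flat)$ is the Clifford parity involution coming from the grading $C(V^\flat) = C^+(V^\flat) \oplus C^-(V^\flat)$.

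For the inclusion $\delta(\RZ^\flat) \subseteq \mathcal Z(x_n)$, I would argue that $\iota$ lifts canonically to the universal $X^\flat$ over $\RZ^\flat$. Indeed, the Clifford grading is preserved by left multiplication by elements of $C^+(V^\flat)^\times$, so the decomposition $C(V^\flat) = C^+(V^\flat) \oplus C^-(V^\flat)$ is $G^\flat$-equivariant. By the Hodge-type tensor formalism of Kim and Howard--Pappas, any such $G^\flat$-equivariant endomorphism of the standard representation $C(V^\flat)$ extends canonically to an endomorphism of the universal $X^\flat$. This yields $\tilde\iota \in \End(X^\flat)$ over $\RZ^\flat$, and the matrix $\begin{pmatrix} 0 & \tilde\iota \\ \tilde\iota & 0\end{pmatrix}$ defines an endomorphism of $X = X^\flat \oplus X^\flat$ lifting $x_n$, which gives the desired inclusion.

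For the reverse inclusion $\mathcal Z(x_n) \subseteq \delta(\RZ^\flat)$, suppose $(X, \rho, \lambda)$ lies in $\mathcal Z(x_n)$ over a base $R$, so that we have $\tilde x_n \in \End(X)$ lifting $x_n$. Since $x_n^2 = q(x_n) = 1$, also $\tilde x_n^2 = 1$. Because $p$ is odd, $e^\pm := \tfrac{1}{2}(1 \pm \tilde x_n)$ are orthogonal idempotents in $\End(X)$, splitting $X = X^+ \oplus X^-$ as a lift of the $\pm 1$-eigenspace decomposition of $x_n$ on $\mathbb X_0$. The involution $\tilde x_n$ exchanges $X^+$ and $X^-$, producing a canonical isomorphism between them and letting us name the common factor $X^\flat$ (with induced quasi-isogeny $\rho^\flat$ and polarization $\lambda^\flat$). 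The invertible change-of-basis matrix $\begin{pmatrix} 1 & 1 \\ \iota & -\iota \end{pmatrix}$ (invertible because $2 \in W^\times$) then identifies $X$ with the doubling $X^\flat \oplus X^\flat$ compatibly with $\rho$ and $\lambda$, placing the point in $\phi(\RZ(\mathbb X_0^\flat, \lambda_0^\flat)) \cap \RZ$. The main technical obstacle I anticipate is ensuring that the two constructions are genuinely scheme-theoretic and functorial in $R$, in particular that $\tilde\iota$ in the first direction and the identification $X^+ \cong X^\flat \cong X^-$ in the second are canonical enough to yield an equality of closed formal subschemes, rather than merely an agreement on $k$-points or on isomorphism classes.
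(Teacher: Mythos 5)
Your first inclusion $\delta(\RZ^\flat)\subseteq\mathcal Z(x_n)$ is essentially the paper's argument: one exhibits an explicit lift of $x_n$ swapping the two summands of $X^\flat\oplus X^\flat$. Your extra care with the parity involution $\iota$ is harmless (it is part of the $\ZZ/2$-grading structure that the universal $X^\flat$ carries, so it does lift), and this half goes through.

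The reverse inclusion, however, has a genuine gap. From $\tilde x_n^2=1$ you form $e^{\pm}=\tfrac12(1\pm\tilde x_n)$ and split $X=X^+\oplus X^-$, and you then assert that ``the involution $\tilde x_n$ exchanges $X^+$ and $X^-$.'' It does not: $\tilde x_n e^{\pm}=\pm e^{\pm}$, so $\tilde x_n$ acts as $+1$ on $X^+$ and as $-1$ on $X^-$, preserving each eigenspace. The single operator $\tilde x_n$ generates only the commutative algebra $\ZZ_p[\tilde x_n]/(\tilde x_n^2-1)\cong\ZZ_p\times\ZZ_p$, which yields the eigenspace splitting but provides no isomorphism $X^+\cong X^-$ and hence no identification of $X$ with a doubling $X^\flat\oplus X^\flat$ compatible with the specific decomposition $C(V)=C(V^\flat)\oplus C(V^\flat)x_n$ that defines $\phi$. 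To exchange the eigenspaces you need a \emph{second} endomorphism of $X$ that anticommutes with $\tilde x_n$. This is precisely what the paper's proof supplies: over all of $\RZ$ the universal $X$ already carries the right Clifford multiplication action of $C(V)$ (inherited from the ambient symplectic Rapoport--Zink space), and combining the lifted left multiplication by $x_n$ with the right multiplication by $x_n$ produces a rank-four algebra $C(x_n)^{\mathrm{op}}\otimes C(x_n)\cong M_2(\ZZ_p)$ acting on $X$ --- the essential point being that the two generators anticommute up to the grading, so one obtains genuine off-diagonal matrix units. Those matrix units are what identify the two summands and place the point in $\phi(\RZ(\mathbb X_0^\flat,\lambda_0^\flat))\cap\RZ=\delta(\RZ^\flat)$. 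Without importing this extra structure, your change-of-basis matrix $\begin{pmatrix}1&1\\ \iota&-\iota\end{pmatrix}$ has no isomorphism $X^+\cong X^-$ to act through, and the argument does not close.
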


\begin{proof}
 Let $X^\flat$ be the universal $p$-divisible group over $\RZ^\flat$ and $\rho^\flat$ be the universal quasi-isogeny. Then it follows from the commutative diagram (\ref{eq:funcotrialityRZ}) that the image of $(X^\flat,\rho)$  under $\delta$ is given by the $p$-divisible group $(X^\flat\oplus  X^\flat,\rho^\flat \oplus \rho^\flat)$. Since $x_n$ has norm 1, right multiplication by $x_n$ swaps the two factors $C(V^\flat)$ and $C(V^\flat)x_n$.  It follows that the quasi-endomorphism $$(\rho^{\flat}\oplus \rho^{\flat} )\circ x_n\circ (\rho^{\flat}\oplus \rho^{\flat} )^{-1}: (X^\flat\oplus X^\flat)\to(X^\flat\oplus X^\flat)$$ (uniquely determined by the rigidity of quasi-isogenies) simply swaps the two factors, which is an actual endomorphism (i.e., swapping) of $X^{\flat} \oplus X^{\flat}$. By Definition \ref{def:specialcycle} of $\mathcal{Z}(x_n)$, we have $\delta(\RZ^\flat)\subseteq \mathcal{Z}(x_n)$.

Conversely, over $\mathcal{Z}(x_n)$ the universal $p$-divisible group $X$ admits an action of $C(x_n)^\mathrm{op} \otimes C(V)$, where $C(x_n)$ is the Clifford algebra of the rank one quadratic space $\mathbb{Z}_p x_n$. Notice $$C(x_n)^\mathrm{op} \otimes C(V)\cong (C(x_n)^\mathrm{op} \otimes C(x_n)) \oplus (C(x_n)^\mathrm{op} \otimes C(V^\flat)).$$ It follows that over $\mathcal{Z}(x_n)$ the universal $p$-divisible group $X$ admits an action of $C(x_n)^\mathrm{op} \otimes C(x_n)$, which is isomorphic to the matrix algebra $M_2(\mathbb{Z}_p)$. The two natural idempotents of $M_2(\mathbb{Z}_p)$ then decomposes $X$ as a direct sum of the form $X^\flat \oplus X^\flat$. Hence $\mathcal{Z}(x_n)\subseteq \phi(\RZ(\mathbb{X}_0^\flat,\lambda_0^\flat))\cap \RZ$. The latter is equal to $\delta(\RZ^\flat)$ by (\ref{eq:cartesian}) and hence $\mathcal{Z}(x_n)\subseteq \delta(\RZ^\flat)$.
\end{proof}

\begin{rem}
In the following we will only use the inclusion $\delta(\RZ^\flat)\subseteq\mathcal{Z}(x_n)$.
\end{rem}

\subsection{Arithmetic intersection of GGP cycles}
\label{sec:goal}
\begin{defn}
The closed immersion $\delta$ induces a closed immersion of formal schemes $$(\id, \delta): \RZ^\flat\rightarrow \RZ^\flat \times_W \RZ.$$ Denote by $\Delta$ the image of $(\id,\delta)$, which we call the \emph{GGP cycle}.  
\end{defn}

The embedding $V^\flat\hookrightarrow V$ also induces an embedding of quadratic spaces $V^{\flat,\Phi}_K\hookrightarrow V_K^\Phi$ and hence we can view $$J_{b^\flat}=\GSpin(V^{\flat,\Phi}_K)\hookrightarrow J_b$$ as an algebraic subgroup over $\mathbb{Q}_p$.

For any $g\in J_b(\mathbb{Q}_p)$, we obtain a formal subscheme $$g\Delta:=(\id\times g)\Delta\subseteq \RZ^\flat \times_W \RZ,$$ via the action of $g$ on $\RZ$. Our goal is to compute the arithmetic intersection number $$\langle \Delta,g\Delta\rangle,$$ when $g$ is regular semisimple and minuscule. 

\begin{defn}\label{def:relative regular}
  We say $g\in J_b(\mathbb{Q}_p)$ is \emph{regular semisimple} if the  $\mathbf{v}(g):=(x_n, gx_n,\ldots, g^{n-1}x_n)$ forms a $\mathbb{Q}_p$-basis of $V_K^\Phi$. Equivalently, the fundamental matrix $T(g):=T(\mathbf{v}(g))$ is non-singular (Definition \ref{def:specialcycle}).\ignore{ Equivalently, the stabilizer of $g$, for the conjugation action of the subgroup $J_{b^\flat}$, lies in the center ($\cong\mathbb{G}_m$) of $J_{b^\flat}$.}
 We say $g$ is \emph{minuscule} if $\mathbf{v}(g)$ is minuscule (Definition \ref{def:minuscule}).
\end{defn}



\subsection{Fixed points}

Let $g\in J_b(\QQ_p)$ and let $\RZ^g\subseteq \RZ$ be the fixed locus of $g$. Then by definition we have $$\Delta\cap g\Delta\cong\delta(\RZ^\flat)\cap \RZ^g.$$

\begin{defn}
Let $g\in J_b(\mathbb{Q}_p)$ be regular semisimple. We define the lattice $$L(g):=\mathbb{Z}_p x_n+\cdots \mathbb{Z}_p g^{n-1}x_n\subseteq V_K^\Phi.$$  
\end{defn}

\begin{lem}\label{lem:Lg} Inside $\RZ$ both the formal subschemes $\RZ^g $ and $\delta(\RZ^{\flat})$ are stable under $p^{\ZZ}$. Moreover, under the bijection (\ref{eq:RZk}), we have
  \begin{enumerate}
  \item   $p^\mathbb{Z}\backslash\delta(\RZ^\flat(k))\cong\{L=L^\flat \oplus W x_n: L^\flat\subseteq V^\flat_K \text{ special lattices}\}$.
  \item $p^\mathbb{Z}\backslash\delta(\RZ^\flat(k))\cong\{L\text{ special lattices}: x_n \in L \}$.
  \item $p^\mathbb{Z}\backslash\RZ^g(k)\cong\{L \text{ special lattices}: gL=L\}$.
  \item  $p^\mathbb{Z}\backslash(\delta(\RZ^\flat(k))\cap\RZ^g(k))\cong \{L \text{ special lattices}: gL=L, L\supseteq L(g)_W\}$.
  \end{enumerate}
\end{lem}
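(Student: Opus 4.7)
The plan is to treat the four parts in order, using the affine Deligne--Lusztig description \eqref{eq:afdl}--\eqref{eq:RZk} of $\RZ(k)/p^{\ZZ}$ in terms of special lattices, together with Lemma \ref{lem:specialdivisor} and Remark \ref{rem: v and L}. For the stability assertion: the embedding $p^{\ZZ}\hookrightarrow J_b(\QQ_p)$ is central, so every element of $p^{\ZZ}$ commutes with $g$ and $\RZ^g$ is stable. Also $p\in J_{b^\flat}(\QQ_p)\subseteq J_b(\QQ_p)$, and $\delta$ intertwines the $J_{b^\flat}$- and $J_b$-actions by the functoriality of the Rapoport--Zink construction, so $\delta(\RZ^\flat)$ is stable.

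For (2), Lemma \ref{lem:specialdivisor} gives $\delta(\RZ^\flat)=\mathcal Z(x_n)$, and Remark \ref{rem: v and L} identifies $p^{\ZZ}\backslash\mathcal Z(x_n)(k)$ with $\{L\text{ special}:x_n\in L\cap\Phi L\}$. Since $b=b^\flat\in C^+(V^\flat)$ centralizes $x_n$ in $C(V)$ (because $x_n$ anticommutes with every element of $V^\flat$, hence commutes with any even product of such elements), we have $\Phi(x_n)=bx_nb^{-1}=x_n$; in particular $x_n\in L$ already implies $x_n\in\Phi L$, yielding (2). For (1), I combine (2) with the direct-sum identification between special lattices in $V_K^\flat$ and special lattices in $V_K$ containing $x_n$: given a special $L^\flat$, the lattice $L^\flat\oplus Wx_n$ is self-dual (from $V^\flat\perp\ZZ_p x_n$ and $\langle x_n,x_n\rangle\in\ZZ_p^\times$), and the Hodge-type condition is preserved because $\Phi x_n=x_n$ forces $(L^\flat\oplus Wx_n+\Phi(L^\flat\oplus Wx_n))/(L^\flat\oplus Wx_n)\cong(L^\flat+\Phi L^\flat)/L^\flat$. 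Conversely, given special $L\ni x_n$, self-duality of $L$ forces $\langle v,x_n\rangle/\langle x_n,x_n\rangle\in W$ for each $v\in L$, so $L=L^\flat\oplus Wx_n$ with $L^\flat:=L\cap V_K^\flat$, and a direct check shows $L^\flat$ is special in $V_K^\flat$. To see that $\delta$ realizes this bijection on $k$-points, pick an ADLV representative $h\in G^\flat(K)$ for $x^\flat$, so the special lattice of $x^\flat$ is $L^\flat=h\mu^\flat(p^{-1})V_W^\flat$. Since $\mu^\flat$ equals $\mu$ under $G^\flat\hookrightarrow G$ and since $G^\flat\subseteq C^+(V^\flat)^\times$ centralizes $x_n$ in $C(V)$, the element $h\mu(p^{-1})$ acts by conjugation on $V_W=V_W^\flat\oplus Wx_n$ preserving both summands and fixing $x_n$, whence $\delta(x^\flat)$ has special lattice $h\mu(p^{-1})V_W=L^\flat\oplus Wx_n$.

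For (3), the action of $g\in J_b(\QQ_p)$ on $\RZ(k)/p^{\ZZ}\cong X_{G,b,\mu^\sigma}(k)/p^{\ZZ}$ is by left multiplication on $G(K)$-representatives; passing to special lattices via $h\mapsto h\mu(p^{-1})V_W$, the action translates to $L\mapsto g\cdot L$, where $g\in G(K)$ acts on $V_K$ by conjugation. Therefore $p^{\ZZ}\backslash\RZ^g(k)$ identifies with $\{L\text{ special}:gL=L\}$. For (4), combining (2) and (3) gives $p^{\ZZ}\backslash(\delta(\RZ^\flat(k))\cap\RZ^g(k))\cong\{L\text{ special}:x_n\in L,\ gL=L\}$; under $gL=L$, the condition $x_n\in L$ is equivalent to $g^ix_n\in L$ for all $i\ge 0$, i.e.\ to $L(g)_W\subseteq L$, and conversely $L(g)_W\subseteq L$ trivially forces $x_n\in L$.

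I expect the main obstacle to be the compatibility verification in (1): that the closed immersion $\delta$ translates on $k$-points to the direct-sum operation $L^\flat\mapsto L^\flat\oplus Wx_n$ on special lattices. The decisive observation is that $G^\flat$ lies in the even Clifford algebra $C^+(V^\flat)$, which centralizes $x_n$ in $C(V)$; consequently $x_n$ is fixed by the conjugation action of $G^\flat$, and the summand $Wx_n$ travels intact through $\delta$.
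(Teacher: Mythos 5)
Your proposal is correct and follows essentially the same route as the paper: part (1) via ADLV representatives $h\mu(p^{-1})V_W=L^\flat\oplus Wx_n$ (your observation that $G^\flat$ and $\mu(p^{-1})$ lie in $C^+(V^\flat)$ and hence centralize $x_n$ is exactly the "compatibility" the paper invokes), part (2) via the orthogonal decomposition $L=L'\oplus Wx_n$ using that $x_n$ has unit norm, and parts (3) and (4) by the same immediate deductions. The only cosmetic difference is that you additionally route (2) through the equality $\delta(\RZ^\flat)=\mathcal Z(x_n)$ of Lemma \ref{lem:specialdivisor} together with Remark \ref{rem: v and L}, which the paper deliberately avoids (it only ever uses the inclusion $\delta(\RZ^\flat)\subseteq\mathcal Z(x_n)$); since you also supply the direct converse argument, this creates no gap.
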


\begin{proof}
	Since $p^{\ZZ}$ is central in $J_b(\QQ_p)$, we know $\RZ^g$ is stable under $p^{\ZZ}$. The morphism $\delta: \RZ^{\flat} \to \RZ$ is equivariant with respect to the natural inclusion $J_{b^{\flat}} (\QQ_p) \to J_b (\QQ_p)$, and the morphism $J_{b^{\flat}} \to J_b$ restricts to the identity between the centers $\GG_m$ of $J_{b^{\flat}}$ and of $J_b$. It follows that $\delta$ is equivariant for the $p^{\ZZ}$ action, and so $\delta(\RZ^{\flat})$ is stable under $p^{\ZZ}$. We now prove the statements (1) to (4).   
  \begin{enumerate}
  \item For a point $L^\flat\in p^\mathbb{Z}\backslash\RZ^\flat(k)$, we can write $L^\flat=h^\flat\mu^\flat(p^{-1}).V^\flat_W\subseteq V^\flat_K$, for some $h^\flat\in G^\flat(K)$. Then its image under $\delta$ is given by $L=h\mu(p^{-1}).V_W\subseteq V_K$, where $h$ is the image of $h^\flat$ in $G(K)$. By $V=V^\flat \oplus \mathbb{Z}_p x_n$ and the compatibility between $h,\mu$ and $h^\flat, \mu^\flat$, we know that $L=L^\flat \oplus W x_n$. 
  \item Suppose $L$ is a special lattice with $x_n\in L$. Since $x_n$ has norm 1, we know that $L=L' \oplus Wx_n$ is the direct sum of $Wx_n$ and its orthogonal complement $L'$ in $L$. One can check $L'\subseteq V_K^\flat$ is also a special lattice. This finishes the proof in view of item (1). 
  \item This is clear since $\RZ^g(k)$ is the fixed locus of $g$.
  \item For a point $L\in p^\mathbb{Z}\backslash(\delta(\RZ^\flat(k))\cap\RZ^g(k))$, by items (1) (3), we have $L=L^\flat \oplus W x_n$ and $gL=L$. It follows from $x_n\in L$ that $gx_n,\ldots, g^{n-1}x_n\in L$, and so $L\supseteq L(g)_W$. Conversely, if a point $L\in \RZ(k)$ satisfies $gL=L$ and $L \supset L(g)_W$, then $  L \in p^\mathbb{Z}\backslash(\delta(\RZ^\flat(k))\cap\RZ^g(k)) $ by items (2) and (3)\qedhere
  \end{enumerate}
\end{proof}

\begin{defn}
We say a vertex lattice $\Lambda$ is  a \emph{$g$-vertex lattice} if $g\Lambda=\Lambda$ and $\Lambda\subseteq L(g)^\vee$. Denote the set of all $g$-vertex lattices by $\VL(g)$. In general, if a vertex lattice $\Lambda$ satisfies $g\Lambda=\Lambda$, then $g$ induces an action on $\Omega_0=\Lambda/\Lambda^\vee$, which further induces an action $\bar g$ on $\RZ_\Lambda^\mathrm{red}$ and $\BT_\Lambda$. We denote the fixed locus of $\bar g$ on $\BT_\Lambda$ by $\BT_\Lambda^{\bar g}$. 
\end{defn}

\begin{prop}\label{prop:fixed-points}
  $$p^\mathbb{Z}\backslash(\delta(\RZ^\flat)\cap\RZ^g)(k)=\coprod_{\Lambda\in \VL(g)}p^\mathbb{Z}\backslash\BT_\Lambda^{\bar g} (k).$$
\end{prop}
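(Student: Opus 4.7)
The plan is to express both sides of the claimed equality in terms of special lattices (via \eqref{eq:RZk}) and verify the two descriptions match. By Lemma \ref{lem:Lg}(4), the left-hand side is in bijection with
\begin{equation*}
\mathcal{S}^{\mathrm{LHS}} := \{L \subseteq V_K \text{ special lattice} : gL = L,\ L \supseteq L(g)_W\}.
\end{equation*}
For the right-hand side, fix $\Lambda \in \VL(g)$. Since the bijection \eqref{eq:RZk} is $J_b(\QQ_p)$-equivariant and the isomorphism \eqref{eq:rzlambda} is equivariant for the induced $\bar g$-action on $\Omega_0 = \Lambda/\Lambda^\vee$, the bijection \eqref{eq:bt} identifies $p^{\ZZ}\backslash\BT_\Lambda^{\bar g}(k)$ with
\begin{equation*}
\mathcal{S}^{\mathrm{RHS}}_\Lambda := \{L \subseteq V_K \text{ special lattice} : gL = L,\ \Lambda(L) = \Lambda\}.
\end{equation*}
So the task reduces to showing that $\mathcal{S}^{\mathrm{LHS}} = \coprod_{\Lambda \in \VL(g)} \mathcal{S}^{\mathrm{RHS}}_\Lambda$, with the disjoint union indexed by $L \mapsto \Lambda(L)$.

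The crucial elementary observation will be that $g$ commutes with $\Phi$ on $V_K$. This follows from the defining relation $gb = b\sigma(g)$ of $J_b(\QQ_p)$ together with the fact that $G$ acts on $V$ by conjugation (so $\Phi(v) = b\sigma(v)b^{-1}$ and $g$ acts as $v \mapsto gvg^{-1}$): a direct computation gives $\Phi(gvg^{-1}) = g\Phi(v)g^{-1}$. Granting this, I will verify the equality of lattice sets in both directions. For $L \in \mathcal{S}^{\mathrm{LHS}}$: since $g$ commutes with $\Phi$ and fixes $L$, it preserves each iterated sum $L^{(r)} = L + \Phi L + \cdots + \Phi^r L$, hence the minimal $\Phi$-invariant lattice $\Lambda(L)_W = L^{(d)}$ and its $\Phi$-fixed sublattice $\Lambda(L)$. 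Moreover, the self-duality of $L$ combined with $L \supseteq L(g)_W$ yields $L \subseteq L(g)^\vee_W$; since $L(g)^\vee \subseteq V_K^\Phi$ gives a $\Phi$-invariant lattice containing $L$, the minimality of $\Lambda(L)_W$ forces $\Lambda(L)_W \subseteq L(g)^\vee_W$, hence $\Lambda(L) \subseteq L(g)^\vee$. Thus $\Lambda(L) \in \VL(g)$. Conversely, for any $\Lambda \in \VL(g)$ and $L \in \mathcal{S}^{\mathrm{RHS}}_\Lambda$, the inclusions $L \supseteq \Lambda^\vee_W \supseteq L(g)_W$ (the second by dualizing $\Lambda \subseteq L(g)^\vee$) place $L$ in $\mathcal{S}^{\mathrm{LHS}}$. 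Disjointness is automatic from the uniqueness of $\Lambda(L)$.

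The main obstacle is not conceptual but bookkeeping: I must check the equivariance compatibilities needed to translate $p^{\ZZ}\backslash\BT_\Lambda^{\bar g}(k)$ into $\mathcal{S}^{\mathrm{RHS}}_\Lambda$, namely that the $J_b(\QQ_p)$-action on $\RZ$ induces the expected action $L \mapsto gL$ on special lattices under \eqref{eq:RZk}, and that this descends compatibly through \eqref{eq:rzlambda} to the action of $\bar g$ on $S_\Lambda$. Both are implicit in Howard--Pappas's construction, so they amount to invocations of results from \cite{Howard2015} recalled in \S \ref{sec:spec-latt-vert}--\S \ref{sec:deligne-luszt-vari}; the remainder of the argument is the lattice manipulation above.
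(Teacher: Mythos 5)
Your proof is correct and follows essentially the same route as the paper: both reduce to the special-lattice description via Lemma \ref{lem:Lg} and the bijection (\ref{eq:bt}), and both convert the condition $L \supseteq L(g)_W$ into $\Lambda(L) \subseteq L(g)^\vee$ using self-duality of $L$ and the minimality of $\Lambda(L)_W$ among $\Phi$-invariant lattices containing $L$. Your explicit verification that $g$ commutes with $\Phi$ (hence preserves $\Lambda(L)$) and the equivariance bookkeeping are points the paper leaves implicit, but the argument is the same.
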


\begin{proof}
By Lemma \ref{lem:Lg},  it suffices to show the $k$-points of the right hand side are in bijection with special lattices $L$ such that $gL=L$ and $L\supseteq L(g)_W$. Notice that any special lattice $L$ is self-dual, so the condition $L\supseteq L(g)_W$ is equivalent to the condition $L\subseteq L(g)_W^\vee$. Since $\Lambda(L)_W$ is the minimal $\Phi$-invariant lattice containing $L$ (\S \ref{sec:spec-latt-vert}), and $L(g)_W^\vee$ is $\Phi$-invariant, we know that the condition $L\subseteq L(g)_W^\vee$ is equivalent to the condition $\Lambda(L)\subseteq L(g)^\vee$. The result now follows from taking $\bar g$-invariants and $g$-invariants of the two sides of the bijection (\ref{eq:bt}).
\end{proof}

\subsection{Fixed points in a Bruhat--Tits stratum}

Let $\Lambda$ be a vertex lattice and $\Omega_0=\Lambda/\Lambda^\vee$ (\S \ref{sec:vertex-lattices}). By the isomorphism (\ref{eq:btdl}), $p^\mathbb{Z}\backslash\BT_\Lambda$ is disjoint union of two isomorphic Deligne--Lusztig varieties $X_B(w^{\pm})$ associated to the Coxeter elements $w^\pm$ for $\SO(\Omega_0)$. Write $X:=X_B(w^\pm)$. To compute $p^\mathbb{Z}\backslash\BT_\Lambda^{\bar g}$, it suffices to compute the $\bar g$-fixed points $X^{\bar g}$.

\begin{defn}
We say a semisimple element $\bar g\in \SO(\Omega_0)$ is \emph{regular} 
if $Z^\circ(\bar g)$, the identity component of the centralizer of $\bar g$ in $\SO(\Omega_0)$, is a (necessarily maximal) torus\footnote{Note the difference with Definition \ref{def:relative regular}. The conflict of the usage of the word "regular" should hopefully not cause confusion.}. 
\end{defn}

\begin{prop}\label{prop:nonempty}
  Let $\Lambda$ be a vertex lattice and let $\bar g\in \SO(\Omega_0)(\mathbb{F}_p)$.
  \begin{enumerate}
  \item \label{item:1} $X^{\bar g}$ is non-empty if and only if $\bar g$ is semisimple and contained in a maximal torus of Coxeter type.
  \item \label{item:2} $X^{\bar g}$ is non-empty and finite if and only if $\bar g$ is regular semisimple and contained in a maximal torus of Coxeter type. In this case, the cardinality of $X^{\bar g}$ is given by $t_\Lambda/2$.
  \end{enumerate}
\end{prop}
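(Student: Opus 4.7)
The plan is to use the concrete description recorded at the end of \S\ref{sec:deligne-luszt-vari}, identifying $X_B(w^+)\sqcup X_B(w^-)$ with the variety of isotropic lines $\ell\subset \Omega$ satisfying
(a) $\dim(\ell+\Phi\ell+\cdots+\Phi^{d-1}\ell)=d$, and
(b) $\dim(\ell+\Phi\ell+\cdots+\Phi^d\ell)=d+1$,
with the choice of component $+$ or $-$ determined by the orientation of the resulting Lagrangian $\mathcal F_d$. Since $\bar g\in\SO(\Omega_0)(\FF_p)$ commutes with $\Phi$, a $\bar g$-fixed flag is equivalent to a $\bar g$-stable such isotropic line $\ell = k v$; then $\bar g v = \lambda v$ with $\lambda\in k^\times$, and $\bar g \Phi^i v = \lambda^{p^i}\Phi^i v$ for all $i$.

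For the forward direction of (1), I exploit that $\bar g\in\SO(\Omega)$ preserves the pairing, giving
\[
\bigl(\lambda^{p^i+p^j}-1\bigr)\langle \Phi^i v,\Phi^j v\rangle = 0.
\]
Because $\mathcal F_d$ is Lagrangian and $\Phi^d v\notin \mathcal F_d=\mathcal F_d^\perp$, some $\langle \Phi^i v,\Phi^d v\rangle$ ($0\le i\le d-1$) is non-zero, yielding $\lambda^{p^i+p^d}=1$. Combining this with the Lagrangian condition on $v,\Phi v,\ldots,\Phi^{d-1}v$, the linear independence of these vectors, and the non-split structure of $\Omega_0$, I plan to argue $i=0$, so $\lambda^{p^d+1}=1$. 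Next, the $k$-span $W$ of $\{\Phi^i v:i\ge 0\}$ is $\Phi$-stable and $\bar g$-invariant, and $\bar g|_W$ is semisimple since $W$ has a basis of eigenvectors. Writing $W = W_0\otimes_{\FF_p} k$ for an $\FF_p$-subspace $W_0\subset \Omega_0$ (Galois descent for $\Phi$-stable subspaces), I argue $W=\Omega$ by showing that a proper non-trivial $W_0$ would induce a split orthogonal decomposition of $\Omega_0$, contradicting its total non-splitness. Thus $\bar g$ is semisimple on the whole of $\Omega$, and the maximal torus stabilizing its eigenspace decomposition is $\FF_p$-rational and of Coxeter type, because $\Phi$ permutes the $2d$ eigenlines in a single cycle of length $2d$.

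The reverse direction of (1) is a direct construction: given $\bar g$ semisimple in a Coxeter torus, take $v$ an eigenvector with eigenvalue $\lambda$ of order $p^d+1$, so that the Frobenius orbit of $\lambda$ has length $2d$, and verify (a) and (b) by eigenvalue arithmetic (all pairings $\langle \Phi^a v,\Phi^b v\rangle$ for $0\le a,b\le d-1$ vanish because $\lambda^{p^a+p^b}\neq 1$ in that range, while $\lambda\cdot\lambda^{p^d}=1$ forces $\Phi^d v\notin \mathcal F_d^\perp$). For part (2), in the regular semisimple case each of the $2d$ eigenspaces is one-dimensional and the above construction produces exactly one fixed flag per eigenvalue, giving $2d$ fixed points of $X_B(w^+)\sqcup X_B(w^-)$; the orthogonal involution of determinant $-1$ swapping the two components pairs dual eigenlines lying in opposite components, so the $2d$ fixed points split evenly, giving $|X^{\bar g}|=d=t_\Lambda/2$ on each component $X=X_B(w^\pm)$. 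If instead $\bar g$ is semisimple in a Coxeter torus but \emph{not} regular, some eigenspace has dimension $\ge 2$, and the isotropic lines therein form a positive-dimensional smooth quadric satisfying the same eigenvalue verification, so $X^{\bar g}$ is infinite.

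The main obstacle is the implication $\lambda^{p^j+1}=1$ for some $1\le j\le d$ $\Rightarrow$ $j=d$ in the forward direction of (1): a priori a shorter Frobenius orbit of $\lambda$ would place $\bar g$ in a non-Coxeter torus. Ruling this out requires the delicate interaction between the Lagrangian condition, the independence of the $\Phi^i v$, and the non-splitness of $\Omega_0$ — specifically, that a proper $\Phi$-stable non-degenerate subspace of $\Omega_0$ of the relevant dimension would force its orthogonal complement to be a non-zero split quadratic space, contradicting total non-splitness. Once this structural fact is in place, the remaining verifications reduce to direct computations with eigenvalues and Frobenius orbits.
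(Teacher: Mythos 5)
Your strategy --- working directly with the explicit flag model of $X_B(w^+)\sqcup X_B(w^-)$ rather than invoking general Deligne--Lusztig theory --- is genuinely different from the paper's proof, which deduces everything from \cite[5.9 (a)]{Lusztig2011} (fixed points force semisimplicity), \cite[Proposition 4.7]{Deligne1976} (the decomposition of $X^{\bar g}$ into Deligne--Lusztig varieties for $Z^\circ(\bar g)$), and the Coxeter number computation. Your route could in principle be made to work, but as written it has genuine gaps. First, the pivotal step in the forward direction of (1) is that $W=\mathrm{span}_k\set{\Phi^i v : i\ge 0}$ equals $\Omega$, and your proposed mechanism --- that a proper $W_0$ would ``induce a split orthogonal decomposition of $\Omega_0$, contradicting its total non-splitness'' --- is not a valid argument: a non-split quadratic space has plenty of proper non-degenerate $\FF_p$-subspaces (e.g. $\Omega_0=H\perp A$ with $H$ hyperbolic and $A$ anisotropic), and no orthogonal decomposition contradicts non-splitness. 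The correct reason is different: since $W\supseteq\mathcal F_d=\mathcal F_d^{\perp}$, one has $W^\perp=\bigcap_{i\ge0}\Phi^i\mathcal F_d$, a $\Phi$-stable subspace of $\bigcap_{i=0}^{d-1}\Phi^i\mathcal F_d=\mathcal F_1=kv$, and $kv$ is not $\Phi$-stable (as $\dim\mathcal F_2=2$); hence $W^\perp=0$. Relatedly, your declared ``main obstacle'' (forcing $i=0$, i.e.\ $\lambda^{p^d+1}=1$) is not actually needed: once $W=\Omega$ you know $\bar g$ is semisimple with all eigenvalues in the single Galois orbit of $\lambda$, and Remark \ref{rem:coxeter} already converts that into membership in a Coxeter torus; attempting to prove $\lambda^{p^d+1}=1$ directly as an intermediate step is both harder and unnecessary.

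Second, your converse construction in (1) takes ``$v$ an eigenvector with eigenvalue $\lambda$ of order $p^d+1$'', but a semisimple element of a Coxeter torus need not have any eigenvalue of that order (e.g.\ $\bar g=\pm 1$, or any non-regular element of the torus), and then your key inequality $\lambda^{p^a+p^b}\neq 1$ for $0\le a,b\le d-1$ fails outright. The fix is to take $v$ a weight vector of the ambient Coxeter torus $T'$ rather than a mere eigenvector of $\bar g$: the $2d$ weight lines of $T'$ are permuted cyclically by $\Phi$, satisfy the flag conditions by the weight pairing, and give $T'$-fixed (hence $\bar g$-fixed) points --- which is exactly what the paper extracts from \cite[Proposition 4.7]{Deligne1976}. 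Finally, in (2) your infinitude claim for non-regular $\bar g$ asserts that the isotropic lines in a $\ge 2$-dimensional eigenspace form a positive-dimensional family ``satisfying the same eigenvalue verification''; but the flag conditions (linear independence of $v,\Phi v,\dots,\Phi^{d-1}v$ and $\Phi^d v\notin\mathcal F_d$) are open conditions whose non-emptiness on that family is precisely what must be proved, and it is not automatic line by line. The regular case of (2), including the count of $2d$ eigenlines split evenly between the two components to give $t_\Lambda/2$ per component, is essentially correct.
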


\begin{rem}\label{rem:coxeter}
Recall that a maximal torus $T'$ is of Coxeter type if $T'=h T h^{-1}$ for some $h\in \SO(\Omega_0)$ such that $h^{-1}\Phi(h)$ lifts to a  Coxeter element $w$ in the Weyl group $W(T)=N(T)/T$. In other words, $T'$ is conjugate to $T$ over $k$ but its Frobenius structure is given by $w\cdot\Phi$. For the Coxeter element $w=w^\pm$ constructed in \S \ref{sec:deligne-luszt-vari}, we know that an element $(\lambda_1,\ldots,\lambda_d,\lambda_d^{-1},\ldots,\lambda_1^{-1})$ of $T(k)$ is fixed by $w\cdot\Phi$ if and only if $$(\lambda_1,\lambda_2,\ldots,\lambda_{d-1}, \lambda_d)=(\lambda_d^{\mp p},\lambda_1^p \ldots, \lambda_{d-2}^p,\lambda_{d-1}^{\pm p}).$$  It follows that a semisimple element $\bar g\in \SO(\Omega_0)(\mathbb{F}_p)$ is contained in a maximal torus of Coxeter type if and only if the eigenvalues of $\bar g$ on $\Omega_0 \otimes k$ belong to a single Galois orbit.
\end{rem}


\begin{proof}
\begin{enumerate}
 \item  Suppose $X^{\bar g}$ is non-empty. Then it is a general fact about Deligne--Lusztig varieties that $\bar g$ must be semisimple (\cite[5.9 (a)]{Lusztig2011}). Let $T(w)\subseteq \SO(\Omega_0)$ be a torus of Coxeter type (associated to $w=w^+$ or $w^-$) and $B(w)\supseteq T(w)$ be a Borel. Assume $\bar g$ is semisimple. Then we know from \cite[Proposition 4.7]{Deligne1976} that $X^{\bar g}$ is a disjoint union of Deligne-Lusztig varieties $X_{T'\subseteq B'}$ for the group $G'=Z^\circ(\bar g)$ and the pairs $$(T', B')=(h T(w) h^{-1}, h B(w) h^{-1}\cap G'),$$ where $h$ runs over classes $G'(\mathbb{F}_p)\backslash\SO(\Omega_0)(\mathbb{F}_p)$ such that $\bar g\in h T(w)h^{-1}$. Therefore $X^{\bar g}$ is non-empty if and only if there exists $h\in \SO(\Omega_0)(\mathbb{F}_p)$ such that $\bar g\in h T(w) h^{-1}$, if and only if $\bar g$ is contained in a maximal torus of Coxeter type (as so is $T(w)$).
 \item By part (1) we know that $X^{\bar g}$ is further finite if and only if all $X_{T'\subseteq B'}$ are zero dimensional, if and only if all $B'=hBh^{-1}\cap G'$ are tori. This happens exactly when $G'=Z^\circ(\bar g)$ itself is a torus, i.e., when $\bar g$ is regular. In this case, $G'$ is a maximal torus of Coxeter type in $\SO(\Omega_0)$ and the cardinality of $X^{\bar g}$ is equal to the cardinality of $N(T(w))(\mathbb{F}_p)/T(w)(\mathbb{F}_p)$. The latter group is isomorphic to $(N(T(w))/T(w))^\Phi$ by Lang's theorem and hence is isomorphic to the $\Phi$-twisted centralizer of $w$ in the Weyl group $W(T)=N(T)/T$: $$Z_\Phi(w):=\{x\in W(T): xw=w\Phi(x)\}.$$ The cardinality of $Z_\Phi(w)$ is known as the Coxeter number of the group $\SO(\Omega_0)$, which is equal to $d=t_\Lambda/2$ since $\SO(\Omega_0)$ is a non-split even orthogonal group (\cite[1.15]{Lusztig1976/77}).  \qedhere
\end{enumerate}
\end{proof}

\subsection{Point-counting in the minuscule case} Let $g\in J_b(\mathbb{Q}_p)$ be regular semisimple and minuscule. Then $\Omega_0(g):=L(g)^\vee/L(g)$ is a $\mathbb{F}_p$-vector space (see Definition \ref{def:minuscule}), and hence $L(g)^\vee$ is a vertex lattice.

\begin{rem}\label{rem:regular}
If $\RZ^g$ is non-empty, then $g$ fixes some vertex lattice and so we know that the characteristic polynomial of $g$ has $\ZZ_p$-coefficients. It follows that $L(g)$ is a $g$-stable lattice, from which it also follows easily that $L(g)^{\vee}$ is $g$-stable. Hence by definition $L(g)^\vee$ is a $g$-vertex lattice. The induced action of $g$ on $\Omega_0(g)$, denoted by $\bar g\in \SO(\Omega_0(g))(\mathbb{F}_p)$, makes $\Omega_0(g)$ a $\bar g$-cyclic $\mathbb{F}_p$-vector space. It follows that the minimal polynomial of $\bar g$ is equal to its characteristic polynomial.
\end{rem}

From now on we assume $\RZ^g$ is non-empty. Let $\bar g \in \SO(\Omega_0(g))(\mathbb{F}_p)$ be as in Remark \ref{rem:regular}.


\begin{defn}
For any polynomial $R(T)$, we define its \emph{reciprocal} to be $$R^*(T):=T^{\deg R(T)}\cdot R(1/T).$$ We say $R(T)$ is \emph{self-reciprocal} if $R(T)=R^*(T)$.
\end{defn}

\begin{defn}
 Let $P(T)\in \mathbb{F}_p[T]$ be the characteristic polynomial of $\bar g\in \SO(\Omega_0(g))$. Then $P(T)$ is self-reciprocal. For any monic irreducible factor $Q(T)$ of $P(T)$, we denote by $m(Q(T))$ to be the multiplicity of $Q(T)$ appearing in $P(T)$.    
\end{defn}

\begin{thm}\label{thm:pointscounting}Assume $\RZ^g $ is non-empty. Then 
$p^\mathbb{Z}\backslash(\delta(\RZ^\flat)\cap\RZ^g)(k)$ is non-empty if and only if $P(T)$ has a unique self-reciprocal monic irreducible factor $Q(T)$ such that $m(Q(T))$ is odd. In this case, $p^\mathbb{Z}\backslash(\delta(\RZ^\flat)\cap\RZ^g)(k)$ is finite and has cardinality $$\deg Q(T)\cdot \prod_{R(T)}(1+m(R(T))),$$ where $R(T)$ runs over all non-self-reciprocal monic irreducible factors of $P(T)$.
\end{thm}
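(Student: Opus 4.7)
The strategy is to combine Proposition \ref{prop:fixed-points} with Proposition \ref{prop:nonempty}, then reduce to a combinatorial count over monic divisors of $P(T)$. By Proposition \ref{prop:fixed-points}, it suffices to determine, for each $\Lambda\in\VL(g)$, when $p^\mathbb{Z}\backslash\BT_\Lambda^{\bar g}(k)$ is non-empty and to compute its cardinality. Since $L(g)$ and $L(g)^\vee$ are both $g$-stable (Remark \ref{rem:regular}), $g$ induces an orthogonal automorphism $\bar g\in\SO(\Omega_0(g))$ on $\Omega_0(g)=L(g)^\vee/L(g)$, and $\Omega_0(g)$ is $\bar g$-cyclic with characteristic (hence minimal) polynomial $P(T)$.

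I would first parameterize $\VL(g)$ explicitly: the map $\Lambda\mapsto W:=\Lambda/L(g)$ gives a bijection between $\VL(g)$ and the set of $\bar g$-stable coisotropic subspaces $W\subseteq\Omega_0(g)$, under which $\Omega_0=\Lambda/\Lambda^\vee$ corresponds to $W/W^\perp$. Because $\Omega_0(g)$ is $\bar g$-cyclic, every $\bar g$-stable subspace is of the form $W_f:=\ker f(\bar g)$ for a unique monic divisor $f\mid P$, and using that the adjoint of $\bar g$ with respect to the induced pairing is $\bar g^{-1}$ together with $(P/f)^*=P/f^*$ (a consequence of $P^*=P$), one computes $W_f^\perp=W_{P/f^*}$. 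The coisotropic condition thereby becomes $P\mid ff^*$. A short computation along the primary decomposition also shows $W_f/W_f^\perp$ is $\bar g$-cyclic with characteristic polynomial $ff^*/P$.

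Next I would apply Proposition \ref{prop:nonempty}. By Remark \ref{rem:coxeter}, for semisimple $\bar g$ on $\Omega_0$ the Coxeter-torus condition is equivalent to the eigenvalues forming a single Galois orbit, i.e., to $ff^*/P$ being irreducible over $\FF_p$. When $ff^*/P$ is irreducible it is automatically separable, so $\bar g$ is regular semisimple in a Coxeter torus, and Proposition \ref{prop:nonempty} together with the two-component decomposition $p^\mathbb{Z}\backslash\BT_\Lambda\cong X_B(w^+)\coprod X_B(w^-)$ yields $|p^\mathbb{Z}\backslash\BT_\Lambda^{\bar g}(k)|=2\cdot(t_\Lambda/2)=t_\Lambda=\deg(ff^*/P)$. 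In all other cases ($ff^*/P$ reducible or not separable), $\bar g$ either fails to lie in any Coxeter torus or is non-semisimple on $\Omega_0$, and Proposition \ref{prop:nonempty}(1) gives $\BT_\Lambda^{\bar g}=\varnothing$; in particular finiteness of $p^\mathbb{Z}\backslash(\delta(\RZ^\flat)\cap\RZ^g)(k)$ comes for free.

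Finally I would count. Writing $P=\prod_i Q_i^{m_i}\prod_j (R_j R_j^*)^{n_j}$ with $Q_i$ self-reciprocal irreducible and $\{R_j,R_j^*\}$ the non-self-reciprocal conjugate pairs, and $f=\prod_i Q_i^{a_i}\prod_j R_j^{b_j}(R_j^*)^{c_j}$, one obtains $ff^*/P=\prod_i Q_i^{2a_i-m_i}\prod_j (R_j R_j^*)^{b_j+c_j-n_j}$. Requiring this to be a single monic irreducible forces $ff^*/P=Q_{i_0}$ for exactly one index $i_0$, which happens iff $m_{i_0}$ is odd while all other $m_i$ are even (the stated uniqueness criterion with $Q=Q_{i_0}$); the exponents $a_i$ are then uniquely determined, while the freedom $b_j+c_j=n_j$ with $0\le b_j,c_j\le n_j$ contributes $n_j+1=1+m(R_j)$ choices per non-self-reciprocal pair. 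Multiplying the per-lattice count $t_\Lambda=\deg Q$ by the total number of good lattices then recovers the claimed cardinality. The main technical point is the adjoint identity $W_f^\perp=W_{P/f^*}$, which requires carefully tracking how the pairing $p\cdot q$ on $L(g)^\vee$ descends to $\Omega_0(g)$ and verifying that $\bar g$, arising from $G=\GSpin\to\SO$, acts genuinely orthogonally rather than only as a similitude; the remainder is elementary linear algebra and bookkeeping.
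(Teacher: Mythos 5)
Your proposal is correct and follows essentially the same route as the paper: reduce via Proposition \ref{prop:fixed-points} to counting $\BT_\Lambda^{\bar g}$ over $\VL(g)$, apply Proposition \ref{prop:nonempty} together with Remark \ref{rem:coxeter} and the cyclicity of $\Omega_0(g)$, and then do the combinatorics of divisors of $P(T)$. The only (immaterial) difference is that you parametrize $\VL(g)$ by coisotropic subspaces $W_f=\ker f(\bar g)$ with $P\mid ff^*$, whereas the paper uses the dual parametrization by totally isotropic $U=\Lambda^\vee/L(g)$ and factorizations $P=P_1QP_1^*$; your count $\prod_j(1+n_j)$ over conjugate pairs $\{R_j,R_j^*\}$ agrees with the paper's count of such factorizations.
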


\begin{proof}
  By Proposition \ref{prop:fixed-points}, we know that $p^\mathbb{Z}\backslash(\delta(\RZ^\flat)\cap\RZ^g)(k)$ is non-empty if and only if $p^\mathbb{Z}\backslash\BT_\Lambda^g$ is non-empty for some $\Lambda\in \VL(g)$. For any $\Lambda\in \VL(g)$, by definition we have a chain of inclusions of lattices $$L(g)\subseteq \Lambda^\vee\subseteq \Lambda\subseteq L(g)^\vee,$$ which induces a filtration of $\mathbb{F}_p$-vector spaces, $$0\subseteq \Lambda^\vee/L(g)\subseteq \Lambda/L(g)\subseteq \Omega_0(g).$$ It follows that the map $\Lambda\mapsto \Lambda^\vee/L(g)$ gives a bijection
\begin{equation}
  \label{eq:VLg}
  \VL(g)\cong\{\text{totally isotropic }\bar g\text{-invariant subspaces }U\subseteq\Omega_0(g)\}.
\end{equation} By the bijection (\ref{eq:VLg}), $\VL(g)$ is non-empty if and only if there is a totally isotropic $\bar g$-invariant subspace $U$ of $\Omega_0(g)$. Such a subspace $U$ induces a filtration
  \begin{equation}
    \label{eq:filtration}
    0\subseteq U\subseteq U^\perp\subseteq \Omega_0(g).
  \end{equation}
 Since $U$ and $U^\perp$ are $\bar g$-invariant, we obtain a decomposition of the characteristic polynomial
  \begin{equation}
    \label{eq:decompoPT}
    P(T)=P_1(T)Q(T)P_2(T)
  \end{equation}
  where $P_1(T), Q(T), P_2(T)$ are respectively the characteristic polynomials of $\bar g$ acting on the associated graded $U$, $U^\perp/U$ and $\Omega_0(g)/U^\perp$. Notice the non-degenerate quadratic form on $\Omega_0(g)$ identifies $\Omega_0(g)/U^\perp$ with the linear dual of $U$, from which we know that $P_2(T)=P_1^*(T)$. Similarly, we know that $Q(T)=Q^*(T)$, i.e., $Q(T)$ is self-reciprocal.

  Let $\Lambda=L(g)+U^\perp$ be the $g$-vertex lattice corresponding to $U$ under the bijection (\ref{eq:VLg}) and let $\Omega_0=\Lambda/\Lambda^\vee$ and $\bar g_0\in \SO(\Omega_0)(\mathbb{F}_p)$ be the induced action of $\bar g$ on $\Omega_0$. By Remark \ref{rem:regular}, the minimal polynomial of $\bar g$ is equal to its characteristic polynomial $P(T)$. Thus the minimal polynomial of $\bar g_0$ is equal to its characteristic polynomial $Q(T)$  under the decomposition (\ref{eq:decompoPT}). If $\bar g_0$ is semisimple, then its eigenvalues are distinct. If $\bar g$ is further contained in a torus of Coxeter type, then we know that its eigenvalues belong to a single Galois orbit (Remark \ref{rem:coxeter}), so $Q(T)$ is irreducible. Conversely, if $Q(T)$ is irreducible, then clearly $\bar g_0$ is semisimple and contained in a torus of Coxeter type. Hence we know that $\bar g_0$ is semisimple and contained in a torus of Coxeter type if and only if $Q(T)$ is irreducible.

  Therefore by  Proposition \ref{prop:nonempty} (\ref{item:1}), $\BT_\Lambda^{\bar g_0}$ is non-empty if and only if $Q(T)$ is irreducible.  In this case, $\bar g_0$ is indeed regular semisimple and the cardinality of $p^\mathbb{Z}\backslash\BT_\Lambda^{\bar g_0}$ is equal $2\cdot\#X^{\bar g_0}$ (due to two connected components), which is equal to $\dim_{\mathbb{F}_p} \Omega_0=\deg Q(T)$ by Proposition \ref{prop:nonempty} (\ref{item:2}).

  Since $P_2(T)=P_1^*(T)$, we know the multiplicity of $R(T)$ in $P_1(T)P_2(T)$ is even for any self-reciprocal factor $R(T)$. Hence $Q(T)$ is the unique self-reciprocal monic irreducible factor of $P(T)$ such that $m(Q(T))$ is odd. Finally, the factorizations  (\ref{eq:decompoPT}) with $P_2(T)=P_1^*(T)$ corresponds bijectively to the filtrations (\ref{eq:filtration}). The proof is now finished by noticing that the number of such factorizations is exactly given by  $$\prod_{R(T)}(1+m(R(T))),$$  where $R(T)$ runs over all monic irreducible factors of $P(T)$ such that $R(T)\ne R^*(T)$.
\end{proof}

\section{The reducedness of minuscule special cycles}\label{sec:reduc-minusc-spec}
\ignore{\subsection{Review of Faltings's explicit deformation theory}
We review Faltings's explicit deformation theory of $p$-divisible groups \cite{faltings99} \S 7, following \cite{kisin2010integral} and \cite{moonen1998models}. 

We allow $k$ to be an arbitrary perfect field of characteristic $p>2$.
We use the notations $X_0, (s_{\alpha}), G, \mu$ to denote more general objects:
\begin{itemize}
	\item $X_0$ is an arbitrary $p$-divisible group over $k$. 
	\item Define $M_0 : = \mathbb D (X_0) (W)$. It is equipped with the Frobenius, denoted by $\phi_{M_0}$. We will interchangeably think of $\phi_{M_0}$ as a $W$-linear map $\phi_W^* M_0 \to M_0$ or as a $\phi_W$-semi-linear map $M_0 \to M_0$, where $\phi_W$ is the Frobenius on $W$. This convention about semi-linear maps will also be adopted in the following for other Frobenii. 
	\item $G \subset \GL(M_0)$ is a reductive subgroup (over $W$) cut out by a family of tensors $(s_{\alpha})$. 
	\item We assume $(s_{\alpha})$ are $\phi_{M_0}$-invariant, when viewed as tensors over $M_0 [1/p]$. (cf. \cite{kisin2010integral} Footnote 6 on P 17.)
	\item We assume the Hodge filtration on $\mathbb D(X_0) (k) = M_0 \otimes _W k$ is $G\otimes _W k$-split, in the sense that it admits a splitting whose corresponding cocharacter $\mu_0 : \GG_m \to \GL(M_0 \otimes _W k)$ factors through $G\otimes _W k$. We fix the choice of such a $\mu_0$. 
	\item Fix the choice of a lift $\mu : \GG_m \to G$ of $\mu_0$.
	\item Let $\Fil^1 M_0 \subset M_0$ be the filtration defined by $\mu$. This gives rise to a $p$-divisible group $X_{0,W}$ over $W$ lifting $X_0$.
	 \end{itemize} 

Let $U^o$ (resp. $U^o_G$) be the opposite unipotent in $\GL(M_0)$ (resp. $G$) defined by $\mu$. Let $R$ (resp. $R_G$) be the complete local ring at the identity section of $U^o$ (resp. $U^0_G$). Let $\iota: R \to R_G$ be the natural quotient map.  

Choose an isomorphism of $W$-algebras: $$\triv_R: R \isom W [[ t_1,\cdots, t_n]]. $$ This defines a Frobenius $\phi_R$ on $R$, by the usual Frobenius on $W$ and $t_i \mapsto t_i ^p$. Note that $\phi_R$ depends on the choice of $\triv_R$. 

Define $M = M _0\otimes _W R$. Define the Frobenius $\phi_M$ by the composition: 
$$ M = M_0 \otimes _W R \xrightarrow{\phi_{M_0} \otimes \phi_R} M \xrightarrow{u} M,$$ where $u \in U^o(R) \subset \GL (M) = \GL(M_0 \otimes _W R)$ is the tautological section\footnote{Write this out explicitly}.

Define $\Fil^1 M = \Fil^1 M_0 \otimes _W R$. At this point we have obtained a triple $(M, \Fil^1 M, \phi_M)$. In general, there is at most one connection $\nabla$ on $M$ such that $\phi_M$ is horizontal. If exists, it is integrable and topologically nilpotent. (cf. \cite{moonen1998models} 4.3.1., 4.3.2.) 
Now Faltings's theory implies that such a connection $\nabla$ exists for $(M, \phi_M)$, and the resulting quadruple $(M, \Fil^1 M, \phi_M, \nabla)$ corresponds to a $p$-divisible group $X _R$ over $\spec R$\footnote{In general there is an equivalence from the category of such quadruples to the category of $p$-divisible groups over $R$, cf. \cite{moonen1998models} 4.1.}, such that $X_R \otimes _R R/(t_1,\cdots, t_n)$ is canonically isomorphic to $X_{0,W}$ and such that $X_R$ is a versal deformation of $X_0$. See \cite{moonen1998models} 4.5 for details.

Before we proceed, let's recall some terminology about connections mentioned in the above paragraph, and fix some notations:
\begin{itemize}
	\item a connection $\nabla$ on $M$ means a map $\nabla: M \to M\otimes_R \hat \Omega^1_R$ satisfying the usual linearity and Leibniz conditions. Here $\hat \Omega^1_R : = \varprojlim_{e} \Omega^1_{R/p^e}$ is the "module of $p$-adically continuous $1$-differentials" on $R$. For our specific $R$, we have $\hat \Omega_R^1 = \Omega_{R/W} ^1 \cong W[[\underline t]] dt_1 \oplus \cdots \oplus W[[\underline t]] d t_n$. To check this, it suffices to check that $\Omega^1_{R/p^e} = \Omega^1_{(R/p^e) / (W/p^e)}$. Since $k = W/pW$ is perfect, $W/p^e$ as an abelian group is generated by elements of the form $w^{p^e}, w\in W/p^e$. It follows that for any $w \in W/p^e$, we have $dw = 0$ in $\Omega_{R/p^e}$. This proves the claim that $\hat \Omega_R = \Omega_{R/W}$. 
	\item If $\nabla$ is a connection on $M$ as above, write $\Theta_i : = \nabla_{\partial/\partial t_i} : M \to M$. Thus we have $$\nabla(m) = \sum_{i=1} ^n \Theta_i (m) dt_i, ~ \forall m \in M. $$ Integrability of $\nabla$ is equivalent to the condition $\Theta_i \Theta_j = \Theta_j \Theta_i, ~\forall i,j$. Topological nilpotence of $\nabla$ is equivalent to the condition that for each $1\leq i \leq n$ and each $m \in M$, there exists $N\in \NN$ such that $\Theta_i^{N'} (m) \in pM,$ for all $\NN \ni N' \geq N$. 
	\item Suppose $\nabla$ is integrable and topological nilpotent. Define $\Theta_i$ as in the previous item. For a multi-index $K= (K_1,\cdots, K_n) \in \ZZ_{\geq 0} ^{\oplus n}$, write $\Theta_K: = \Theta_1^{K_1} \cdots \Theta_n^{K_n}$. Define the differential-equation-solving map: 
	$$ \xi: M_ 0 \to M_0 \otimes _W W[1/p] [[ t_1,\cdots, t_n]] $$ by the formula 
	$$\xi (m) : = \sum_{K\in \ZZ_{\geq 0} ^{\oplus n }} \Theta_K (m) \frac{(-\underline t) ^K}{K!}. $$ Now we claim that $\xi$ sends $m_0 \in M_0$ to the unique element in the RHS which is horizontal under $\nabla$ (where we use $\triv_{R}$ to view $R$ as a subring of $W[1/p] [[t_1,\cdots, t_n]]$), and which specializes to $m_0$ under $t_i \mapsto 0$. In fact, that $\xi(m_0)$ is horizontal follows from direct computation, and the uniqueness statement follows from the observation that $\xi$ sends a $W$-basis of $M_0$ to a $W[1/p] [[ t_1,\cdots, t_n]]$-basis of the RHS, which is true because the determinant of these vectors is an element in $W[1/p] [[ t_1,\cdots, t_n]]$ whose constant term is not zero in $W[1/p]$. Moreover, by the formula for $\xi$ we see that $\xi$ has image in $M_0 \otimes _W R'$, where $R'$ is the subring of $W[1/p] [[\underline t]]$ defined as $R' = \set{\sum_K a_K \underline t ^K | a_K \in W[1/p], K! a_K \in W}$. 
	\item Let $\nabla$ be an arbitrary connection on $M$. Define $\Theta_i$ as before. We now explain what it means to say that $\phi_M$ is horizontal with respect to $\nabla$. Write $\tilde M: = M + p^{-1} \Fil^1 M$. Thus $\phi_M$ induces an $R$-isomorphism $\phi_M : \phi_R ^* \tilde M \isom M$. We will write a typical element in $\phi_R^* \tilde M = \tilde M \otimes _{R, \phi_R} R$ as $m\otimes r$, with $m\in \tilde M$ and $r \in R$. The connection $\nabla$ induces a connection $\tilde \nabla$ on $\phi_R^* \tilde M$, characterized by 
	$$\phi_R^* \tilde M \otimes_R \hat \Omega_R^1 \ni \tilde \nabla( m\otimes 1) = \sum_i (\Theta_i(m) \otimes 1) \otimes d (\phi_R t_i) = \sum_i (\Theta_i(m) \otimes 1) \otimes p t_i^{p-1}d t_i.  $$ The horizontality of $\phi_M$ is the requirement that under the isomorphism $\phi_M : \phi_R^* \tilde M \isom M$, the connections $\tilde \nabla$ and $\nabla$ on the two sides correspond. 
\end{itemize}

We choose an isomorphism of $W$-algebras 
$$\triv_{R_G}: R_G \isom W[[\tau_1,\cdots, \tau_r]]$$ and get a Frobenius $\phi_{R_G}$ on $R_G$ similarly as in the discussion for $R$. We assume that $\triv_R$ and $\triv_{R_G}$ are chosen in such a way that the natural map $\iota: R \to R_G$ is compatible with $\phi_R$ and $\phi_{R_G}$. 

The quadruple $(M, \Fil^1 M , \phi_M, \nabla)$ specializes along $\iota: R \to R_G$ to a quadruple $(M_{R_G}, \Fil^1 M_{R_G}, \phi_{M_{R_G}}, \nabla_{M_{R_G}})$ over $R_G$ in the naive way, where in particular $\phi_{M_{R_G}}$ is defined to be the composition $$\phi_{R_G} ^* M_{R_G} = \phi_{R_G} ^* \iota^* M \cong \iota^* \phi_{R}^* M \xrightarrow{\iota^* (\phi_M)} \iota^* M = M_{R_G} . $$ 

We remark that in general, without assuming that $\iota \phi_{R_G} = \phi_R \iota$, we can still define the specialization of $(M, \Fil^1 M, \phi_M,\nabla)$ along $\iota$, but in the definition of $\phi_{M_{R_G}} $ we need to replace the canonical isomorphism $\phi_{R_G} ^* \iota^* M \cong \iota^* \phi_{R}^* M$ in the above with an isomorphism whose definition uses $\nabla$, cf. \cite{faltings99} P 143, or \cite{moonen1998models} 4.3, \cite{kisin2010integral} P 16. 

In any case, the quadruple $(M_{R_G}, \Fil^1 M_{R_G}, \phi_{M_{R_G}}, \nabla_{M_{R_G}})$ over $R_G$ corresponds to the $p$-divisible group $\iota^* X_R = : X_{R_G}$ over $R_G$. Morally the pair $(R_G, X_{R_G})$ is a versal deformation of $X_0$ viewed as a $p$-divisible with addition $G$-structure. 

Note that there are mistakes in \cite{kisin2010integral} 1.5.2 - 1.5.4, which are corrected in \cite{kisinmodp} Erratum. By the corrected statement in \cite{kisin2010integral} 1.5.4 (cf. \cite{kisinmodp} E.1), $\nabla_{M_{R_G}}$ has coefficients in $\Lie G$. In particular, the tensors $(\iota^* (s_{\alpha}\otimes 1))$ on $M_{R_G}$ are parallel with respect to $\nabla_{M_{R_G}}$. 

Consider an arbitrary $W$-algebra homomorphism $x: R \to W$. Consider $X_x : = x^* X_R$, which is a $p$-divisible group over $W$ together with a canonical isomorphism $X_x \otimes _W k \cong X_0$. By Grothendieck-Messing theory, $X_x$ corresponds to a filtration  $\Fil^1_x M_0 \subset M_0 = \mathbb D(X_0) (W)$ lifting the filtration in $M_0 \otimes _W k$. We describe $\Fil^1_x M_0$ as follows: 

Recall that the differential-equation-solving map: 
$$ \xi: M_ 0 \to M_0 \otimes _W W[1/p] [[ t_1,\cdots, t_n]] $$ has image in $ M_0 \otimes _W R'. $ Since $x : R  = W[[t_1,\cdots, t_n]] \to W$ necessarily sends each $t_i$ into the ideal $ pW$, which has divided powers, we can compose to get a map $$ (1\otimes x)\circ \xi : M_0 \to M_0, $$ which we call \textit{the parallel transport map from $t_i= 0$ to $t_i= x(t_i)$}, denoted by $g_{x}$. We know that $g_x$ is a $W$-module automorphism of $M_0$. 

\begin{lem}
	We have $\Fil^1_x M_0 = g_x^{-1} \Fil^1 M_0. $
\end{lem}
\begin{proof}
	Consider the following morphism in the cristalline site of $R/p$:
	$$R = W[[t_1,\cdots, t_n]] \to R, ~t_i \mapsto t_i + x(t_i).$$
	This morphism together with the crystal structure of $\mathbb D(X_{R/p}) $ defines an identification 
	$$j_x: M= \mathbb D(X_{R/p}) (R) \isom \mathbb D(X_{R/p}) (R) = M,$$ which we know is determined by parallel transport on $M$.
 The lemma now follows from the fact that the identification of $$\mathbb  D (X_0) (W)= \mathbb D(X_R) (R)\otimes _{R, t_i\mapsto 0} W = M_R \otimes _{R, t_i\mapsto 0} W = M_0 $$ with $$\mathbb D (X_x) (W) = \mathbb D(X_R)(R) \otimes_{R,x} W= M_R \otimes_{R ,x} W = M_0 \otimes _W R \otimes _{R,x} W = M_0,$$ given by the natural identification $\mathbb D(X_0) (W) \isom \mathbb D(X_x) (W)$, is equal to $j_x \otimes_{R, t_i\mapsto 0} W$.
\end{proof}
\begin{lem}
	Suppose $x : R\to W$ is a $W$-algebra homomorphism that factors through $\iota : R \to R_G$. Then $\Fil_x^1 M_0 = g_x^{-1} \Fil^1 M_0$, for some $g_x \in G(W) \subset \GL(M_0)$. 
\end{lem}
\begin{proof}
Since $(\iota^*( s_{\alpha}\otimes 1))$ are parallel under $\nabla_{M_{R_G}} = \iota^* \nabla$, we know that the parallel transport map $g_x$ discussed above fixes $s_{\alpha}$. But that means $g_x \in G(W)$. 
\end{proof}
}

\subsection{The analogue of a result of Madapusi Pera on special cycles}

\begin{defn}\label{defn:isotropic line}
	Let $\oo$ be an arbitrary $\ZZ[1/2]$-algebra. Assume $\oo$ is local. Let $\mathbf L$ be a finite free $\oo$-module equipped with the structure of a self-dual quadratic space over $\oo$. By an \textit{isotropic line} in $\mathbf L$ we mean a direct summand of rank one on which the quadratic form is zero.  
\end{defn}
We start with a general lemma on Clifford algebras.
\begin{lem}\label{lem:Cliff}
	Let $\oo$ and $\mathbf L$ be as in Definition \ref{defn:isotropic line}. Let $C(\mathbf L)$ be the associated Clifford algebra. Let $\xi \in \mathbf L$ be an $\oo$-generator of an isotropic line. Let $\ker (\xi)$ be the kernel of the endomorphism of $C(\mathbf L)$ given by left multiplication by $\xi$. Then for any $v\in \mathbf L$, left multiplication by $v$ preserves $\ker(\xi)$ if and only if $v$ is orthogonal to $\xi$.  
\end{lem}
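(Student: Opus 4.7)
The plan is to exploit the Clifford relation $vw+wv=\langle v,w\rangle$ (where $\langle\cdot,\cdot\rangle$ denotes the bilinear form polarizing $q$, so $\langle v,v\rangle=2q(v)$), which is valid in $C(\mathbf L)$ because $2$ is invertible in $\oo$. I would first record that $\xi^2=q(\xi)=0$ since $\xi$ is isotropic; in particular $\xi\in\ker(\xi)$.

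Next, for any $v\in\mathbf L$ and any $c\in\ker(\xi)$, I would compute
\[
\xi(vc) \;=\; (\xi v)c \;=\; \bigl(\langle v,\xi\rangle - v\xi\bigr)c \;=\; \langle v,\xi\rangle\, c - v(\xi c) \;=\; \langle v,\xi\rangle\, c.
\]
So left multiplication by $v$ preserves $\ker(\xi)$ if and only if $\langle v,\xi\rangle\cdot c=0$ for every $c\in\ker(\xi)$. The implication $\langle v,\xi\rangle=0 \Rightarrow v\cdot\ker(\xi)\subseteq\ker(\xi)$ is then immediate.

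For the converse, I would specialize the displayed identity to $c=\xi$, which lies in $\ker(\xi)$, to obtain $\langle v,\xi\rangle\,\xi=0$ in $C(\mathbf L)$. It then remains to check that $\xi$, viewed inside $C(\mathbf L)$, generates a free $\oo$-submodule of rank one; this forces $\langle v,\xi\rangle=0$ in $\oo$. This last point is the only slightly delicate step: since $\oo$ can have zero divisors, one cannot cancel $\xi$ naively. The resolution is that $\xi$ generates an $\oo$-direct summand of $\mathbf L$ (by hypothesis it spans an isotropic line), hence extends to an $\oo$-basis of $\mathbf L$; by the standard structure theorem for Clifford algebras over a commutative ring (PBW for $C(\mathbf L)$, valid because $\mathbf L$ is free and $q$ is a quadratic form), the natural map $\mathbf L\hookrightarrow C(\mathbf L)$ identifies $\xi$ with one element of an $\oo$-basis of the free module $C(\mathbf L)$. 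In particular $\xi$ is $\oo$-torsion-free in $C(\mathbf L)$, so $\langle v,\xi\rangle\,\xi=0$ yields $\langle v,\xi\rangle=0$, as desired.

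The main (minor) obstacle is the last observation about freeness of the $\oo$-submodule of $C(\mathbf L)$ generated by $\xi$; everything else is a one-line manipulation of the Clifford relation.
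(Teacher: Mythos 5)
Your proof is correct. The identity $\xi(vc)=\langle v,\xi\rangle\, c$ for $c\in\ker(\xi)$ is valid, the specialization to $c=\xi$ (which lies in $\ker(\xi)$ because $\xi^2=q(\xi)=0$) gives $\langle v,\xi\rangle\,\xi=0$, and your justification that $\xi$ spans a free rank-one direct summand of $C(\mathbf L)$ --- via the standard basis of the Clifford algebra of a free quadratic module --- is exactly what is needed to cancel $\xi$. The paper's argument rests on the same two ingredients (the Clifford relation and the fact that $a\xi=0$ in $C(\mathbf L)$ forces $a=0$), but organizes the converse differently: it uses self-duality of $\mathbf L$ to produce a hyperbolic partner $\zeta$ with $\langle\zeta,\xi\rangle=1$, arranges $\zeta$ to be isotropic, writes $v=v_1+\lambda\zeta$ with $v_1\in\xi^\perp$, and computes $\xi(\lambda\zeta)\xi=2\lambda\xi=0$. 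Your version is more economical: it handles both implications through the single identity $\xi(vc)=\langle v,\xi\rangle\, c$, and it never invokes self-duality of $\mathbf L$, so it in fact proves a marginally more general statement (only the hypothesis that $\oo\xi$ is a rank-one direct summand is used). The one point to watch is the normalization of the bilinear form --- you take $\langle v,v\rangle=2q(v)$ with $vw+wv=\langle v,w\rangle$, while the paper takes $\langle v,v\rangle=q(v)$ with $vw+wv=2\langle v,w\rangle$ --- but since $2\in\oo^\times$ the orthogonality condition is the same either way, so this is purely cosmetic.
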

\begin{proof}
	Assume $v $ is orthogonal to $\xi$. Then $v\xi  = -\xi v$, so $v$ preserves  $\ker (\xi)$. 
	
	Conversely, assume $v$ preserves $\ker (\xi)$. Write $q$ for the quadratic form and $\lprod{,}$ the corresponding bilinear pairing. Since $\oo \xi$ is a direct summand of $\mathbf L$, there exists an $\oo$-module homomorphism $\mathbf L \to \oo$ sending $\xi$ to $1$. Since $\mathbf L $ is self-dual, we know that there exists $\zeta \in \mathbf L$ representing such a homomorphism. Namely we have $$\lprod{\zeta, \xi} = 1. $$ 
	It immediately follows that we have an $\oo$-module direct sum $\mathbf L = \xi^{\perp} \oplus \oo \zeta$. Replacing $\zeta$ by $\zeta - \frac{q(\zeta)}{2} \xi$, we may arrange that $\zeta$ is isotropic.  We have 
	$$q(\zeta +\xi) = 2 \lprod{\zeta, \xi} = 2,$$ and in $C(\mathbf L)$ we have 
	$$ q(\zeta +\xi) = \zeta \xi + \xi \zeta. $$ Hence in $C(\mathbf L)$ we have \begin{align}\label{eq: =2}
	\xi \zeta + \zeta \xi =2
	\end{align}Write $$v = v_1 + \lambda \zeta,$$ with $v_1 \in \xi^{\perp}$ and $\lambda \in \oo$. By the first part of the proof we know that $v_1$ preserves $\ker (\xi)$. Therefore $\lambda \zeta$ preserves $\ker (\xi)$. Note that $\xi \in \ker (\xi)$ as $\xi$ is isotropic. It follows that, in $C(\mathbf L)$, 
	$$ 0 \xlongequal {\lambda\zeta \mbox{ preserves }\ker (\xi) }\xi (\lambda \zeta) \xi \xlongequal{(\ref{eq: =2})} \lambda (2-\zeta \xi) \xi \xlongequal{\xi \mbox{ isotropic}} 2\lambda \xi. $$ This is possible only when $\lambda =0$, and hence we have $v= v_1 \in \xi^{\perp}$.
\end{proof}

The next result is a Rapoport--Zink space analogue of \cite[Proposition 5.16]{MPspin} which is in the context of special cycles on $\Gspin$ Shimura varieties. We only state a weaker analogue as it is sufficient for our need. The proof builds on loc. cit. too. We first introduce some definitions.

\begin{defn}\label{defn of Fil^1}
Denote by $y_{00}$ the distinguished $k$-point of $\RZ$ corresponding to $\mathbb X_0$ and the identity quasi-isogeny. Let $y_0 \in \RZ (k)$ be an arbitrary element. Let $L$ be the special lattice corresponding to $y_0$ under (\ref{eq:RZk}).  When $y_0 = y_{00}$, we have $\Phi L = V_W$ (cf. the discussion below (\ref{eq:RZk})). In this case define $\Fil ^1  (\Phi L) _k$ to be the one-dimensional subspace of $V_k$ defined by the cocharacter $\mu $ of $G_W$ and the representation $G_k \to \GL(V_k)$. For general $y_0$, let $g \in X_{G, b ,\mu^{\sigma}} (k)$ be associated to $y_0$. Then $\Phi L = g V_W$ and $g$ induces a map $V_k \to (\Phi L)_k$ (cf. loc. cit.).  Define $\Fil^1 (\Phi L) _k$ to be the image of $\Fil ^1 V_k$ under the last map. 
\end{defn}
\begin{rem}
By our explicit choice of $\mu$ in \S \ref{explicit choices}, the submodule $\ZZ_p x_2$ in $V$ is of weight $1$ with respect to $\mu$, and $\oplus_{1\leq i \leq n, i\neq 2}\ZZ_px _i$ is of weight $0$ with respect to $\mu$, so $\Fil^1 V_k = k x_2$.
\end{rem}
\begin{rem}\label{rem: not used}
	$\Fil ^1 (\Phi L)_k$ is the orthogonal complement in $(\Phi L) _k$ of $(L \cap \Phi L) _k$. However we will not need this description in the sequel. 
\end{rem}

\begin{defn}
Let $\mathscr C$ be the category defined as follows:
\begin{itemize}
	\item Objects in $\mathscr C$ are triples $(\oo, \oo\to k, \delta)$, where $\oo$ is a local artinian $W$-algebra, $\oo \to k$ is a $W$-algebra map, and $\delta$ is a nilpotent divided power structure on $\ker (\oo \to k)$. 
	\item Morphisms in $\mathscr C$ are $W$-algebra maps that are compatible with the structure maps to $k$ and the divided power structures.  
\end{itemize}
In the following we will abuse notation to write $\oo$ for an object in $\mathscr C$.
\end{defn}

Let $y_0 \in \RZ(k)$ be an arbitrary element. Let $\mathbf v$ be as in Definition \ref{def:specialcycle} such that the special cycle $\mathcal Z: = \mathcal Z (\mathbf v)$ contains $y_0$. In particular $\mathbf v \subset L\cap \Phi L$ by Remark \ref{rem: v and L}. Let $\widehat{\RZ}_{y_0}$ and $\widehat{\mathcal Z}_{y_0}$ be the formal completions of $\RZ$ and $\mathcal Z$ at $y_0$ respectively. 
\begin{thm}\label{thm: MP}
For any $\oo \in \mathscr C$ there is a bijection  $$ f_{\oo}: \widehat{\RZ}_{y_0} (\oo)  \isom  \set{\mbox{isotropic lines in $(\Phi L)_{\oo}: = \Phi L \otimes_W \oo$ lifting $\Fil^1 (\Phi L) _k$}} $$ such that the following properties hold. Here we equip $(\Phi L)_{\oo}$ with the $\oo$-bilinear form obtained by extension of scalars of the $W$-bilinear form on $\Phi L$.
\begin{enumerate}
	\item  $f_{\oo}$ is functorial in $\oo \in \mathscr C$ in the following sense. Let $\oo' \in \mathscr C$ be another object of $\mathscr C$ and let $\phi : \oo \to \oo'$ be a morphism in $\mathscr C$. Then we have a commuting diagram. 
	$$\xymatrix{ \widehat{\RZ}_{y_0} (\oo) \ar[d]^{f_{\oo}} \ar[r] &  \widehat{\RZ}_{y_0} (\oo') \ar[d]^{f_{\oo'}}\\  \im (f_{\oo} ) \ar[r] &  \im (f_{\oo'})}$$ Here the top horizontal map is the natural map induced by $\phi$, and the bottom horizontal map is given by base change along $\phi$. 
	\item $f_{\oo}$ restricts to a bijection 
	 $$
	 f_{\oo, \mathbf v}:  \widehat{\mathcal Z}_{y_0} (\oo) \isom   $$$$ \set{\mbox{isotropic lines in $(\Phi L)_{\oo}$ lifting $\Fil^1 (\Phi L) _k$}  \mbox{ and orthogonal to the image of $\mathbf v$ in $(\Phi L)_{\oo}$} }
	 $$
\end{enumerate}

\end{thm}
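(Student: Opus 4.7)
The plan is to deduce Theorem \ref{thm: MP} from \cite[Proposition 5.16]{MPspin}, combining Howard-Pappas's construction of $\RZ$ via crystalline tensors with Grothendieck-Messing deformation theory. The principal technical step --- and the main obstacle --- is to show that the formal neighborhood $\widehat{\RZ}_{y_0}$ inside the symplectic deformation space of $X_0$ is cut out by exactly those lifts of the Hodge filtration that arise from a cocharacter of $G$, and further that these are naturally parameterized by isotropic lines in the small quadratic lattice $\Phi L$ rather than by some finer datum in $C(V)$. This is precisely what \cite[Proposition 5.16]{MPspin} accomplishes in the integral model of the $\GSpin$ Shimura variety, and the $p$-adic uniformization recalled in \cite[\S 7]{Howard2015} transports the statement to the Rapoport-Zink setting. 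To set up the comparison, let $M = \mathbb{D}(X_0)(W)$ be the Dieudonn\'e module at $y_0$. By Remark \ref{moduli inter of L}, $\Phi L = \{v \in V_K : vM \subset M\}$, exhibiting $\Phi L$ as a quadratic $W$-lattice of crystalline endomorphisms of $M$; the element $g \in G(K)$ associated to $y_0$ provides an isometry $(V_W, q) \isom (\Phi L, q)$ sending $\Fil^1 V_k$ to $\Fil^1 (\Phi L)_k$.

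Given $\oo \in \mathscr{C}$ and a lift $X/\oo$ of $X_0$ representing a point of $\widehat{\RZ}_{y_0}(\oo)$, Grothendieck-Messing theory (applicable because $\ker(\oo \to k)$ carries a nilpotent PD-structure) produces a lifted Hodge filtration on $M \otimes_W \oo$. The condition that the lift lies in $\RZ \subset \RZ(\mathbb{X}_0, \lambda_0)$ --- namely, compatibility with the crystalline tensors $(s_\alpha)$ cutting out $G$ inside $\GL(C(V))$ --- forces this filtration to arise from a cocharacter $\mu' : \mathbb{G}_{m,\oo} \to G_{\oo}$ lifting the Hodge cocharacter at $y_0$. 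Via the standard representation $G \to \SO(V)$ and the isometry $V_\oo \isom (\Phi L)_\oo$ above, such $\mu'$ are in functorial bijection with isotropic lines in $(\Phi L)_\oo$ lifting $\Fil^1 (\Phi L)_k$. This yields $f_\oo$; its bijectivity and functoriality in $\oo$ are both inherited from \cite[Proposition 5.16]{MPspin}.

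For part (2), a lift $X/\oo$ in $\widehat{\RZ}_{y_0}(\oo)$ further lies in $\widehat{\mathcal{Z}}_{y_0}(\oo)$ iff every $v \in \mathbf{v} \subset \Phi L \subset \End(M)$ lifts to an endomorphism of $X$, which by Grothendieck-Messing is equivalent to $v \otimes 1$ preserving the lifted Hodge filtration on $M \otimes_W \oo$. Under the isometry above, the lifted Hodge filtration corresponds, through the spin representation description, to the Clifford-theoretic kernel $\ker(\xi) \subset C(\Phi L)_\oo$, where $\xi$ is a generator of the isotropic line $f_\oo(X) \subset (\Phi L)_\oo$; the preservation of the Hodge filtration by $v$ then translates into preservation of $\ker(\xi)$ under left Clifford multiplication by $v$. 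Applying Lemma \ref{lem:Cliff} to $\mathbf{L} = \Phi L$ gives that $v \otimes 1$ preserves $\ker(\xi)$ iff $v \perp \xi$, which is precisely the orthogonality condition in (2), completing the proof.
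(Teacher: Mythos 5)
Your proposal is correct and follows essentially the same route as the paper: both reduce the existence, bijectivity, and functoriality of $f_{\oo}$ to \cite[Proposition 5.16]{MPspin} transported through the Howard--Pappas comparison morphism $\Theta$ identifying $\widehat{\RZ}_{y_0}$ with the formal completion of the integral model, and both prove part (2) by combining Grothendieck--Messing theory with the fact that the lifted Hodge filtration is the kernel of a generator $\xi$ of the isotropic line, followed by Lemma \ref{lem:Cliff}. The only cosmetic difference is that you unfold the content of Madapusi Pera's proposition (cocharacters of $G$ lifting the Hodge cocharacter) where the paper simply cites it as a black box.
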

\begin{proof}  
	
The existence and construction of the bijection $f_{\oo}$ and the property (1) are consequences of \cite[Proposition 5.16]{MPspin} and the global construction of $\RZ$ in \cite{Howard2015} using the integral model of the $\Gspin$ Shimura variety. We explain this more precisely below.

 Consider  $$\Shh = \Shh_{U_p U^p},$$ the canonical integral model over $\ZZ_{(p)}$ of the Shimura variety associated to the $\Gspin $ Shimura datum associated to a quadratic space $V_{\QQ}$ over $\QQ$, at a suitable level $U^p$ away from $p$ and a hyperspecial level $U_p$ at $p$. See \cite[\S 7]{Howard2015} or \cite{MPspin} for more details on this concept. By \cite[7.2.3]{Howard2015}, we may assume that the following package of data:
\begin{itemize}
	\item the Shimura datum associated to $V_{\QQ}$,
	\item the Kuga-Satake Hodge embedding (cf. \cite[4.14]{Howard2015}) of the Shimura datum into a $\Gsp$ Shimura datum,
	\item the chosen hyperspecial level at $p$,
	\item an element $x_{00} \in \varprojlim_{U^p} \Shh_{U^p U_p} (k)$,
\end{itemize} 
induces, in the fashion of \cite[3.1.4]{Howard2015}, the local unramified Shimura-Hodge datum that we used to define $\RZ$. Let $\widehat{\Shh}$ be the formal scheme over $\ZZ_p$ obtained from $p$-adic completion of $\Shh$, and let $\widehat{\Shh}_W$ be the base change to $W$ of $\widehat{\Shh}$. Then as in \cite[3.2.14]{Howard2015}, we have a morphism of formal schemes over $W$:
 $$\Theta: \RZ \to \widehat{\Shh }_W. $$	
 We know that $\Theta$ maps $y_{00}$ to the $k$-point of $\widehat{\Shh}_W$ induced by $x_{00}$. 
Moreover, let $$x_0: = \Theta(y_0) \in \widehat{\Shh} _W (k) = \Shh(k)$$ and let $\widehat{U}$ be the formal completion of $\Shh$ at $x_0$ (or, what amounts to the same thing, the formal completion of $ \widehat{\Shh} _W$ at $x_0$). By the construction of $\RZ$ in \cite[\S 3]{Howard2015}, we know that $\Theta$ induces an isomorphism $ \widehat{\RZ}_{y_0} \isom \widehat{U}$. 

In \cite{MPspin}, two crystals $\mathbf H_{\cris}, \mathbf L_{\cris}$ are constructed on $\Shh _{k}$. (In fact \cite{MPspin} works over $\FF_p$, but we always base change from $\FF_p$ to $k$.) Here $\mathbf H_{\cris}$ is by definition the first relative crystalline cohomology of the Kuga-Satake abelian scheme over $\Shh_k$ in the sense of loc. cit.\footnote{See footnote \ref{foot}.}The specialization of $\mathbf H_{\cris}$ over $\spec k$ via $x_{00}$ is identified with the Dieudonn\'e module $C_W$, which is the covariant Diedonn\'e module of the $p$-divisible group $\mathbb X_0$ considered in this article (and \cite{Howard2015}) and the contravariant Diedonn\'e module of the Kuga-Satake abelian variety at $x_{00}$ considered in \cite{MPspin}.\footnote{\label{foot}Due to different conventions, the Kuga-Satake abelian scheme (and $p$-divisible group) considered by Madapusi Pera in \cite{MPspin} is different from that considered by Howard-Pappas in \cite{Howard2015}. In fact they are dual to each other.}
 Moreover, the embedding $V \hookrightarrow \End_{\ZZ_p} (C)$ has a cristalline realization, which is a sub-crystal $\mathbf L_{\cris} $ of $\End (\mathbf{H}_{\cris})$. For details see \cite[\S 4]{MPspin}. Among others, $\mathbf L_{\cris}$ has the following structures: 
\begin{itemize}
	\item Its specialization $\mathbf L_{\cris, x_0}$ to any $x_0 \in \Shh (k)$, viewed as a $W$-module, has the structure of a $W$-quadratic space. 
	\item  $\mathbf L_{\cris, x_0}\otimes_W k$ contains a canonical isotropic line $\Fil ^1 (\mathbf L_{\cris, x_0}\otimes_W k)$. 
	\end{itemize}

By the definition of $\Theta$ and the definition of the parametrization of $\RZ(k)$ by the affine Deligne-Lusztig set (cf. \cite[\S 2.4]{Howard2015}), we know that when $y_0 \in \RZ (k)$ corresponds to the special lattice $L$ under (\ref{eq:RZk}), the following statements are true: 
\begin{enumerate}[label=(\alph*)]
	\item\label{item:a} There is an isomorphism of Dieudonn\'e modules $(gC)_W \isom \mathbf H_{\cris, x_0}$.
	\item\label{item:b} There is a $W$-linear isometry $(\Phi L)_W \isom \mathbf L_{\cris,x_0} $ under which $\Fil ^1 (\Phi L)_k$ is identified with $$\Fil ^1 (\mathbf L_{\cris, x_0}\otimes_W k).$$ 
	\item We have a commutative diagram:
	$$ \xymatrix{ \Phi L  \ar@{^{(}->}[r]  \ar[d] &  \End_W ((gC)_W) \ar[d] \\		 \mathbf L_{\cris, x_0} \ar@{^{(}->}[r]  & \End_W(\mathbf H_{\cris, x_0}),}$$ where 
        \begin{itemize}
 	\item the right vertical map is induced by the map in \ref{item:a}.          
 	\item the left vertical map is the map in \ref{item:b}. 
 	\item the bottom horizontal map arises from the fact that $\mathbf L_{\cris}$ is a sub-crystal of $\End(\mathbf H_{\cris})$.
 \end{itemize}
\end{enumerate}
In the rest of the proof we make the identifications in \ref{item:a} and \ref{item:b} above and omit them from the notation. Abbreviate $\mathbf H: = \mathbf H_{\cris ,x_0}$ and $\mathbf L := \mathbf L_{\cris, x_0}$.

Now in \cite[Proposition 5.16]{MPspin} Madapusi Pera constructs a bijection 
$$ \widehat{U}(\oo)  \isom  \set{\mbox{isotropic lines in $\mathbf{L}_{\oo}: = \mathbf L\otimes_W \oo$ lifting $\Fil^1 \mathbf L _k$}} .$$
Moreover by the construction given in loc. cit. the above bijection is functorial in $\oo\in \mathscr C$. We define $f_{\oo}$ as the above bijection precomposed with the isomorphism $\Theta: \widehat{\RZ}_{y_0} \isom \widehat{ U}$. 
 
 It remains to prove property (2). Note that $\mathbf H = gC_W$ is the \emph{covariant} Dieudonn\'e module of the $p$-divisible group $X_{y_0}$ over $k$ determined by $y_0\in \RZ(k)$. Given $y \in \widehat{\RZ} _{y_0} (\oo)$ lifting $y_0$, by Grothendieck-Messing theory (for covariant Dieudonn\'e modules) we know that $y \in \widehat {\mathcal Z}_{y_0}$ if and only if the image of $\mathbf v$ in $\End_{\oo} (\mathbf H_{\oo})$ stabilizes $\Fil^1 \mathbf H_{\oo } \subset \mathbf H_{\oo}$, where $\Fil^1 \mathbf H_{\oo } $ is the Hodge filtration corresponding to the deformation from $k$ to $\oo$ of the $X_{y_0}$ determined by $y$. Now, as is stated in the proof of \cite[Proposition 5.16]{MPspin}\footnote{Madapusi Pera defines $\Fil^1 \mathbf H_{\oo}$ using the contravariant Grothendieck-Messing theory of the $p$-divisible group of the Kuga-Satake abelian scheme in his sense, which is the same as the covariant Grothendieck-Messing theory of the $p$-divisible group over $\widehat{U}$ transported via $\Theta$ from the universal $p$-divisible group over $\RZ$ in the sense of Howard-Pappas.}, we know that $\Fil^1 \mathbf H_{\oo}$ is the kernel in $\mathbf H_{\oo}$ of any $\oo$-generator $\xi$ of the isotropic line $f_{\oo}(y)$. Here $\xi\in \mathbf L_{\oo}$ is viewed as an element of $\End _{\oo}(\mathbf H_{\oo})$. By Lemma \ref{lem:Cliff}, $\mathbf v$ preserves $\Fil^1 \mathbf H_{\oo} = \ker \xi$ if and only if $\mathbf v $ is orthogonal to $\xi$ (inside $\mathbf L_{\oo}$). Thus $y \in \widehat {\mathcal Z}_{y_0}$ if and only if $f_{\oo} (y)$ is orthogonal to the image of $\mathbf v$ in $\mathbf L_{\oo} = (\Phi L) _{\oo}$.
  
\end{proof}

\begin{rem}\label{perp}
	Consider the bijection $f_{\oo, \mathbf v}$ for $\oo =k$. Since the source of this bijection is non-empty, it follows that $\Fil ^1 (\Phi L)_k$ is orthogonal to the image of $\mathbf v$ in $(\Phi L) _{k}$. This observation also follows from the Remark \ref{rem: not used} as $\mathbf v \subset L \cap \Phi L$. 
\end{rem}

\subsection{Reducedness of minuscule special cycles}
\begin{prop}\label{prop:Wp2points}
	Let $\Lambda$ be a $\ZZ_p$-lattice in $V_K^{\Phi}$ with $p^i \Lambda \subset \Lambda^{\vee} \subset \Lambda$ for some $i \in \ZZ_{\geq 1}$. (Equivalently, $\Lambda^{\vee}$ has invariant $(r_1,\cdots, r_n) $ such that $ i\geq r_1\geq r_2\geq \cdots \geq r_n \geq 0$.) Then the special cycle $\mathcal Z(\Lambda^{\vee})$ defined by $\Lambda ^{\vee}$ has no $ (W/p^{i+1})$-points. In particular, taking $i=1$ we see that $\RZ_{\Lambda} (W/p^2) = \varnothing$ for any vertex lattice $\Lambda$, or equivalently $\mathcal Z (\mathbf v) (W/p^2) = \varnothing$ for any minuscule $\mathbf v$. 
\end{prop}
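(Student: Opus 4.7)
The plan is to assume for contradiction that $\mathcal Z(\Lambda^\vee)$ admits a $(W/p^{i+1})$-point $y$ and then apply Theorem~\ref{thm: MP}(2) to convert this into a lattice-theoretic statement that violates the hypothesis $p^i\Lambda \subset \Lambda^\vee$. First, $y$ reduces to a $k$-point $y_0 \in \mathcal Z(\Lambda^\vee)(k) \subset \RZ(k)$ and factors through the formal completion $\widehat{\mathcal Z(\Lambda^\vee)}_{y_0}$; equipping $W/p^{i+1}$ with the canonical divided-power structure on $pW/p^{i+1}W$ (which is nilpotent since $p$ is odd) places it in $\mathscr C$, so the machinery of Theorem~\ref{thm: MP} applies.

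The key input from the special-lattice formalism is the sandwich $\Lambda_W^\vee \subset \Phi L \subset \Lambda_W$, where $L$ is the special lattice associated to $y_0$. The left inclusion is Remark~\ref{rem: v and L} extended $W$-linearly. For the right inclusion, note that $\Phi = b\circ\sigma$ acts on $V_K$ through the natural map $\GSpin(V) \to \SO(V)$ and hence preserves the bilinear form up to $\sigma$; since $L$ is self-dual in $V_K$, so is $\Phi L$, and dualizing $\Lambda_W^\vee \subset \Phi L$ yields $\Phi L \subset \Lambda_W$. Combined with the hypothesis $p^i\Lambda \subset \Lambda^\vee$ this gives $p^{i+1}\Lambda_W \subset p\Lambda_W^\vee \subset p\,\Phi L$, so in particular
\[
  \Phi L \cap p^{i+1}\Lambda_W \subset p\,\Phi L.
\]

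Next I apply Theorem~\ref{thm: MP}(2) with $\mathbf v = \Lambda^\vee$ and $\oo = W/p^{i+1}$ to interpret $y$ as an isotropic line $\ell \subset (\Phi L)_{W/p^{i+1}}$ lifting $\Fil^1(\Phi L)_k$ and orthogonal to the image of $\Lambda^\vee$. Pick a generator $\xi$ of $\ell$ and a lift $\tilde\xi \in \Phi L$. Orthogonality means $\langle \tilde\xi, v\rangle \in p^{i+1}W$ for every $v \in \Lambda_W^\vee$; by the identity $(\Lambda_W^\vee)^\vee = \Lambda_W$ this is equivalent to $\tilde\xi \in \Phi L \cap p^{i+1}\Lambda_W$, and hence by the displayed inclusion $\tilde\xi \in p\,\Phi L$. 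Thus $\xi$ vanishes modulo $p$, contradicting the fact that it generates a rank-one direct summand of $(\Phi L)_{W/p^{i+1}}$ lifting the nonzero line $\Fil^1(\Phi L)_k$.

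The main obstacle is establishing the sandwich $\Lambda_W^\vee \subset \Phi L \subset \Lambda_W$; this relies on the self-duality of $\Phi L$ (equivalently, $\Phi$ preserving the bilinear form on $V_K$) together with Remark~\ref{rem: v and L}. After that, everything reduces to formal manipulation of Theorem~\ref{thm: MP}(2) and bookkeeping with $p$-adic valuations of pairings.
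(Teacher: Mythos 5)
Your proposal is correct and follows essentially the same route as the paper: reduce to a $k$-point, invoke Theorem \ref{thm: MP}(2) with the nilpotent divided-power structure on $\ker(W/p^{i+1}\to k)$ to get an isotropic line orthogonal to $\Lambda^\vee$, lift a generator to $\Phi L$, and use $(\Lambda_W^\vee)^\vee=\Lambda_W$ together with $p^i\Lambda_W\subset\Lambda_W^\vee\subset\Phi L$ to force the lift into $p\,\Phi L$, contradicting that the line lifts the nonzero $\Fil^1(\Phi L)_k$. The only difference is cosmetic: you also establish $\Phi L\subset\Lambda_W$ via self-duality of $\Phi L$, which is true but not actually needed, since your displayed inclusion already follows from $p^{i+1}\Lambda_W\subset p\,\Lambda_W^\vee\subset p\,\Phi L$ alone.
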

\begin{proof}
	Suppose there exists $x\in \mathcal Z(\Lambda^{\vee} ) (W/p^{i+1})$. Let $x_0 \in \mathcal Z(\Lambda^{\vee} ) (k)$ be induced by $x$ under the reduction map $W/p^{i+1} \to W/p =  k$. Under (\ref{eq:RZk}) $x_0$ determines a special lattice $L$. By Remark \ref{rem: v and L}, $\Lambda ^{\vee}_W \subset L \cap \Phi L$. Note that $W/p^{i+1} \to k$ is a surjection whose kernel admits nilpotent divided powers. By Theorem \ref{thm: MP}, the existence of the lift $x$ of $x_0$ implies that there exists an isotropic line $\mathcal L$  (over $W/p^{i+1}$) in $(\Phi L)_{ W/p^{i+1}}$ lifting $\Fil^1 (\Phi L)_{k}$ and such that $\mathcal L$ is orthogonal to the image of $\Lambda^{\vee}$ in $(\Phi L)_{W/p^{i+1}}$. Let $l \in \Phi L$ be a lift of a generator of $\mathcal L$. Then $\lprod{l,\lambda} \in p^{i+1} W$ for all $\lambda \in \Lambda^{\vee}$. It follows that $p^{-(i+1)} l \in \Lambda_W $. Hence $p^{-1 } l \in p^i \Lambda_W \subset (\Lambda^{\vee})_W \subset \Phi L$, i.e. $l \in p \Phi L$. This contradicts with the fact that $\mathcal L$ maps to a non-zero line in $(\Phi L)_{k}$.    
\end{proof}

\subsubsection{}\label{compute tangent space}
	Let $u \in V_K^{\Phi} -\set{0}$. Suppose $x_0 \in \mathcal Z(u)(k)$. Let $T= k[\epsilon]/\epsilon^2$ be the ring of dual numbers over $k$. We equip $T$ with the map $T\to k, \epsilon \mapsto 0$, which has its kernel $(\epsilon) $ admitting nilpotent divided powers (in a unique way). Thus Theorem \ref{thm: MP} can be applied to $\oo=T$. 
	
	Let $\mathcal T_{x_0} \RZ_k$ and $\mathcal T_{x_0} \mathcal Z(u)_k $ be the tangent spaces at $x_0$ to $\RZ_k = \RZ\times_{\Spf W } \spec k$ and to $\mathcal Z(u) _k = \mathcal Z(u) \times _{\Spf W} \spec k$ respectively. We will always take the point of view that $\mathcal T_{x_0} \RZ_k$ is the preimage of $\set{x_0}$ under the reduction map $\RZ (T) \to \RZ(k)$. Similarly for $\mathcal T_{x_0} \mathcal Z(u) _k$. We compute $\mathcal T_{x_0} \RZ_k$ and $\mathcal T_{x_0} \mathcal Z(u)_k $ explicitly in the following. The result is given in Corollary \ref{conclusion of tgt}. 
	
	Let $L$ be the special lattice associated to $x_0$ under (\ref{eq:RZk}). Since $x_0 \in \mathcal Z(u) (k)$, we have $u \in L \cap \Phi L$ by Remark \ref{rem: v and L}. Let $\bar u$ be the image of $u$ in $(\Phi L)_k$. Let $\Fil^1 (\Phi L)_k$ be as in Definition \ref{defn of Fil^1}. By Remark \ref{perp} we know that $\bar u$ is orthogonal to $\Fil^1 (\Phi L)_k$. 
	
	 Define $\mathscr D$ to be the set of isotropic lines in $(\Phi L)_T$ lifting $\Fil^1 (\Phi L)_k$. Define $\mathscr D_u $ to be the subset of $\mathscr D$ consisting of lines which are in addition orthogonal to the image of $u$ in $(\Phi L)_T$. Let 
	 \begin{align}\label{app of MP} \mathscr G= f_T: \mathcal T_{x_0}
	 \RZ_k \isom \mathscr  D.
	 \end{align} be the bijection given in Theorem \ref{thm: MP}. By the same theorem it restrict to a bijection 
	$$\mathcal T_{x_0} \mathcal Z(u)_k \isom \mathscr  D_u 
.$$	
	\begin{defn}
 We identify $(\Phi L)_T$ with $(\Phi L)_k \otimes _k T$. Fix a $k$-generator $v_0 $ of $\Fil^1 (\Phi L)_k$.  
 Define a map $$ \tilde {\mathscr F }: (\Phi L)_k  \to \set{\mbox{$T$-submodules of } (\Phi L)_T } $$ $$w\mapsto \mathrm{span}_T \set{ v_0 \otimes_k 1 + w\otimes_k \epsilon} . $$  
	\end{defn}
	\begin{lem}
		$\tilde{\mathscr F}$ factors through $(\Phi L)_k / \Fil^1 (\Phi L )_k$, and its image consists of $T$-module direct summands of $(\Phi L)_T$ of rank one.
	\end{lem}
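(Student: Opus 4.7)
The plan is to verify the two claims independently, both by elementary linear algebra over $T = k[\epsilon]/\epsilon^2$.

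For the factorization through $(\Phi L)_k/\Fil^1(\Phi L)_k$, I would take $w_1, w_2 \in (\Phi L)_k$ with $w_1 - w_2 = \lambda v_0 \in k v_0 = \Fil^1(\Phi L)_k$ and check that the two generators of $\tilde{\mathscr F}(w_1)$ and $\tilde{\mathscr F}(w_2)$ differ by multiplication by a unit in $T$. Explicitly,
\[
v_0 \otimes 1 + w_1 \otimes \epsilon \;=\; v_0 \otimes (1+\lambda\epsilon) + w_2 \otimes \epsilon \;=\; (1+\lambda\epsilon)\bigl(v_0 \otimes 1 + w_2 \otimes \epsilon\bigr),
\]
where the last equality uses $\epsilon^2 = 0$. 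Since $1+\lambda\epsilon$ is a unit in $T$ with inverse $1-\lambda\epsilon$, both generators span the same cyclic $T$-submodule, and the factorization follows.

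For the direct summand claim, I would reduce modulo $\epsilon$. The generator $\xi := v_0 \otimes 1 + w \otimes \epsilon$ of $\tilde{\mathscr F}(w)$ reduces to $v_0 \in (\Phi L)_k$, which is nonzero by assumption on $v_0$ and hence generates a rank-one direct summand of the $k$-vector space $(\Phi L)_k$. Extending $v_0$ to a $k$-basis of $(\Phi L)_k$ and lifting this basis tautologically to a $T$-basis of $(\Phi L)_T$, I would then replace the lift of $v_0$ by $\xi$; since the new tuple differs from the old by a $T$-linear transformation whose matrix reduces modulo $\epsilon$ to the identity, it remains a $T$-basis of $(\Phi L)_T$. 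Hence $T\cdot \xi$ is a free rank-one direct summand.

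There is no substantial obstacle here: the crux of both parts is the invertibility of elements of the form $1+\lambda\epsilon$ in $T$, which makes the first step immediate and underlies the Nakayama-type argument for the second.
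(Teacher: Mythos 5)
Your proof is correct and follows essentially the same route as the paper: the factorization claim is verified by exactly the same identity $v_0\otimes 1+(w+\lambda v_0)\otimes\epsilon=(1+\lambda\epsilon)(v_0\otimes 1+w\otimes\epsilon)$ with $1+\lambda\epsilon\in T^\times$. For the direct-summand claim the paper writes down an explicit $T$-module complement (spanned by $v'\otimes 1+w\otimes\epsilon$ for $v'$ in a complement of $\Fil^1(\Phi L)_k$), whereas you lift a basis and invoke invertibility of a matrix congruent to the identity mod $\epsilon$; these are interchangeable one-line verifications of the same fact.
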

	\begin{proof}
		For any $\lambda \in k$, we have 
		$$ v_0 \otimes 1 + (w+\lambda v_0)\otimes \epsilon = (1+ \lambda\epsilon ) (v_0 \otimes 1 + w\otimes \epsilon),$$ and $1+\lambda \epsilon \in T^{\times}$. Hence $\tilde{\mathscr F} $ factors through $(\Phi L)_k / \Fil^1 (\Phi L) _k$. For any $w\in (\Phi L)_k$, we know that $\tilde {\mathscr F} (w)$ is a free module of rank one by definition. It remains to show that $\tilde {\mathscr F} (w)$ is a direct summand of $(\Phi L)_T$. Let $A$ be a $k$-vector space complement of $\Fil^1(\Phi L) _k$ inside $(\Phi L)_k$. We easily check that the following $T$-submodule of $(\Phi L)_T$ is a $T$-module complement of $\tilde {\mathscr F}(w)$: 
		$$ \mathrm{span}_T \set{v' \otimes 1 + w\otimes \epsilon | v' \in A}.$$
	\end{proof}
	\begin{cor}\label{defn of mathscr F} The map $\tilde{\mathscr F}$ induces a bijection of sets: 
		$$   \mathscr F: ( \Fil ^1 (\Phi L)_k)^{\perp} / \Fil^1 (\Phi L)_k  \isom \mathscr D.$$ Moreover, $\mathscr F$ restricts to a bijection $$\set{\bar u, \Fil ^1 (\Phi L)_k} ^{\perp} /\Fil^1 (\Phi L)_ k \isom \mathscr D_u.$$
	\end{cor}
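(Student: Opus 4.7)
The plan is to prove the two bijections by three short linear-algebra checks. First I will identify the isotropy condition on $\tilde{\mathscr F}(w)$. Then I will verify that the induced map from the quotient is injective by analyzing when two vectors of $(\Phi L)_T$ generate the same $T$-line, and produce a preimage for an arbitrary $\mathcal L\in \mathscr D$. The restricted bijection onto $\mathscr D_u$ will follow from one further orthogonality computation.

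Since $\Fil^1(\Phi L)_k$ is an isotropic line, the chosen generator $v_0$ satisfies $q(v_0)=0$ and $\langle v_0, v_0\rangle =0$. Using $\epsilon^2=0$, expanding the quadratic form gives
\[q(v_0\otimes 1 + w\otimes \epsilon) = q(v_0) + 2\langle v_0, w\rangle \epsilon = 2\langle v_0, w\rangle \epsilon.\]
Since $p$ is odd, this vanishes precisely when $w\in (\Fil^1(\Phi L)_k)^{\perp}$; hence $\tilde{\mathscr F}$ restricted to $(\Fil^1(\Phi L)_k)^{\perp}$ lands in $\mathscr D$. For injectivity after passing to the quotient: suppose $\tilde{\mathscr F}(w)=\tilde{\mathscr F}(w')$. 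Then $(a+b\epsilon)(v_0\otimes 1 + w'\otimes \epsilon) = v_0\otimes 1 + w\otimes \epsilon$ for some unit $a+b\epsilon \in T^{\times}$ (so $a\in k^{\times}$); comparing coefficients under the identification $(\Phi L)_T=(\Phi L)_k\otimes_k T$ forces $a=1$ and $w-w' = bv_0 \in \Fil^1(\Phi L)_k$. For surjectivity, an arbitrary $\mathcal L\in \mathscr D$ is generated by an element which may be written uniquely as $v\otimes 1 + w\otimes \epsilon$ with $v,w\in (\Phi L)_k$. The condition that $\mathcal L$ lifts $\Fil^1(\Phi L)_k$ forces $v=\lambda v_0$ for some $\lambda \in k^{\times}$; rescaling by $\lambda^{-1}\in T^{\times}$ exhibits $\mathcal L = \tilde{\mathscr F}(\lambda^{-1}w)$.

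For the restriction to $\mathscr D_u$, I will compute
\[\langle \bar u\otimes 1, v_0\otimes 1 + w\otimes \epsilon\rangle = \langle \bar u, v_0\rangle + \langle \bar u, w\rangle \epsilon.\]
The constant term already vanishes by Remark \ref{perp}, so orthogonality of $\tilde{\mathscr F}(w)$ to the image of $u$ is equivalent to $\langle \bar u, w\rangle =0$. Combined with $w\in (\Fil^1(\Phi L)_k)^{\perp}$, this yields exactly $w\in \set{\bar u, \Fil^1(\Phi L)_k}^{\perp}$. No step is a serious obstacle; the only care required is in the rescaling in the surjectivity argument and in invoking $p\neq 2$ to divide by $2$, both of which are available.
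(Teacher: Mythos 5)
Your proposal is correct and follows essentially the same route as the paper: the paper's own proof consists precisely of the two orthogonality checks you perform (isotropy of $\tilde{\mathscr F}(w)$ iff $\lprod{w,v_0}=0$, and orthogonality to the image of $u$ iff $\lprod{w,\bar u}=0$ via Remark \ref{perp}), with the injectivity/surjectivity onto lines lifting $\Fil^1(\Phi L)_k$ left implicit after the preceding lemma. Your explicit rescaling arguments for injectivity and surjectivity simply supply those omitted details and are sound.
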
\begin{proof}
	Since $\lprod{v_0,v_0} = 0 \in k$, the condition that $\tilde {\mathscr F}(w)$ is isotropic is equivalent to $\lprod{w, v_0} = 0 \in k$. Since $v_0$ is orthogonal to $\bar u$, the condition that $\tilde {\mathscr F}(w)$ is orthogonal to the image of $u$ in $(\Phi L)_T$ is equivalent to $\lprod{w , \bar u} = 0 \in k$. 
\end{proof}
 
\begin{lem}\label{k linear}Let $\mathscr G$ be as in (\ref{app of MP}) and let $\mathscr F$ be as in Corollary \ref{defn of mathscr F}. The map $$\mathscr G^{-1} \circ \mathscr F: (\Fil^1 (\Phi L)_k)^{\perp }/\Fil^1 (\Phi L)_k \to  \mathcal T_{x_0} \RZ_k $$ \ is $k$-linear. 
\end{lem}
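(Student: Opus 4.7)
The plan is to deduce the $k$-linearity from the functoriality property (part (1) of Theorem \ref{thm: MP}) applied to specific morphisms in $\mathscr{C}$ that encode the $k$-vector space structure on the tangent space. Recall that $\mathcal T_{x_0}\RZ_k$ inherits its $k$-vector space structure from general deformation theory: scalar multiplication by $\lambda \in k$ is induced by functoriality along the $W$-algebra endomorphism $\phi_\lambda: T \to T$, $\epsilon \mapsto \lambda \epsilon$; while addition is defined using the ring $T_{12} := k[\epsilon_1, \epsilon_2]/(\epsilon_1, \epsilon_2)^2 \cong T \times_k T$, equipped with the two projections $p_1, p_2$ and the ``sum'' map $s: T_{12} \to T$, $\epsilon_i \mapsto \epsilon$. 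We first observe that $T_{12}$ is an object of $\mathscr{C}$ (with the unique trivial nilpotent divided power structure on its maximal ideal, since this ideal squares to zero) and that $\phi_\lambda, p_1, p_2, s$ are all morphisms in $\mathscr{C}$.

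Compatibility with scalar multiplication will follow directly. By the functoriality of $f_{\oo}$, the action of $\phi_\lambda$ on $\widehat{\RZ}_{y_0}(T) = \mathcal T_{x_0}\RZ_k$ corresponds via $\mathscr{G}$ to base change of isotropic lines along $\phi_\lambda$ on the $\mathscr{D}$-side. A direct inspection shows that this base change sends $\mathscr{F}(w) = \mathrm{span}_T\{v_0 \otimes 1 + w \otimes \epsilon\}$ to $\mathrm{span}_T\{v_0 \otimes 1 + w \otimes \lambda\epsilon\} = \mathscr{F}(\lambda w)$, which matches the effect of scalar multiplication by $\lambda$ on the source.

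For additivity, given $t_1, t_2 \in \mathcal T_{x_0}\RZ_k$, we use the pro-representability of $\widehat{\RZ}_{y_0}$ (a standard consequence of the local Noetherianness of $\RZ$ at $x_0$) to obtain a unique element $t \in \widehat{\RZ}_{y_0}(T_{12})$ with $p_{i,*}(t) = t_i$; then $t_1 + t_2 = s_*(t)$ by the definition of the tangent space addition. Functoriality along $p_1, p_2$ then forces $f_{T_{12}}(t)$ to be spanned, after rescaling by a suitable unit of $T_{12}$ to normalize the constant term, by an element of the form $v_0 \otimes 1 + w_1 \otimes \epsilon_1 + w_2 \otimes \epsilon_2$, where $w_i \in (\Fil^1(\Phi L)_k)^\perp$ is a lift of $\mathscr{F}^{-1}(\mathscr{G}(t_i))$. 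Applying functoriality along $s$ then yields $\mathscr{G}(t_1 + t_2) = \mathrm{span}_T\{v_0 \otimes 1 + (w_1 + w_2) \otimes \epsilon\} = \mathscr{F}(w_1 + w_2)$, proving additivity. The main (mild) obstacle is verifying that a generator of $f_{T_{12}}(t)$ can be normalized to the claimed form, which follows from the fact that $f_{T_{12}}(t)$ is a $T_{12}$-module direct summand of rank one whose reduction modulo $(\epsilon_1, \epsilon_2)$ equals $\Fil^1(\Phi L)_k = k \cdot v_0$.
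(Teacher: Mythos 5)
Your proposal is correct and follows essentially the same route as the paper's proof: both reduce scalar multiplication to functoriality of $f_{\oo}$ along $\epsilon\mapsto\lambda\epsilon$, and both handle additivity by passing to $k[\epsilon_1,\epsilon_2]/(\epsilon_1,\epsilon_2)^2$, characterizing the corresponding isotropic line by its two reductions, and pushing forward along $\epsilon_i\mapsto\epsilon$. The only cosmetic difference is that you invoke pro-representability to justify the bijection $\widehat{\RZ}_{y_0}(T_{12})\cong\widehat{\RZ}_{y_0}(T_1)\times\widehat{\RZ}_{y_0}(T_2)$, where the paper simply cites the fiber-product property of $\tilde T$.
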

\begin{proof}
	The proof is a routine check, using the functorial property stated in Theorem \ref{thm: MP}.
	
	We first recall the $k$-vector space structure on $\mathcal T_{x_0} \RZ_k$, from the point of view that $\mathcal T_{x_0} \RZ_k$ is the preimage of $\set{x_0}$ under the map $\RZ(T) \to \RZ(k)$. 
	
\textbf{Scalar multiplication}: Given a tangent vector $v \in \mathcal T_{x_0} \RZ_k$ corresponding to $v_T \in \RZ(T)$ and given a scalar $\lambda \in k$, the tangent vector $\lambda v$ corresponds to the following element $(\lambda v)_T$ of $\RZ(T)$: the image of $v_T$ under $\RZ(T) \xrightarrow{T\to T, \epsilon \mapsto \lambda \epsilon} \RZ(T)$. We see that $(\lambda v)_T$ is indeed a preimage of $x_0$.

\textbf{Addition}: Let $v_1, v_2\in \mathcal T_{x_0} \RZ_k$ be two tangent vectors. Let $T_i = k[\epsilon_i]/\epsilon _i ^2, i=1,2$ be two copies of $T$. We represent $v_i$ as an element $(v_i)_{T_i}$ in $\RZ(T_i)$ that reduces to $x_0 \in \RZ(k)$, for $i=1,2$. Let $\tilde T$ be the fiber product of $T_1$ and $T_2$ over $k$, in the category of $k$-algebras. Namely, $\tilde T = k[\epsilon_1,\epsilon_2]/(\epsilon_1,\epsilon_2) ^2$. Let $\delta$ be the $k$-algebra map 
$$\delta : \tilde T \to T, ~ \epsilon_1\mapsto \epsilon,~\epsilon_2 \mapsto \epsilon.$$  
  By the fact that $\tilde T$ is the fiber product of $T_1$ and $T_2$, there is a canonical bijection 
 \begin{align}\label{fiber prod}
 \RZ(T_1) \times \RZ(T_2) \isom \RZ(\tilde T).
 \end{align}
Denote by $v_1 \tilde + v_2$ the image of $((v_1)_{T_1}, (v_2) _{T_2} )$ in $\RZ(\tilde T )$ under the above bijection. Then the tangent vector $v_1+v_2$ corresponds to the following element $(v_1+v_2)_T$ of $\RZ(T)$: the image of $v_1 \tilde + v_2$ under $\delta_*: \RZ(\tilde T) \to \RZ(T)$. This last element is indeed a preimage of $x_0$. 
 
 We now check that $\mathscr G^{-1} \circ \mathscr F$ is $k$-linear. We first check the compatibility with scalar multiplication. For any $\lambda \in k$ and $w\in( \Fil ^1 (\Phi L)_k)^{\perp}$, we have $\tilde {\mathscr F} (w) = \mathrm{span}_T\set{v_0 \otimes 1 + w\otimes _k \epsilon}$ and $\tilde {\mathscr F} (\lambda w) = \mathrm{span}_T\set{v_0 \otimes 1 + \lambda w\otimes _k \epsilon}$. Let $m_{\lambda}$ denote the map $ T\to T, \epsilon \mapsto \lambda \epsilon$. Then we have $\tilde{\mathscr F } (w) \otimes _{T,m_{\lambda}} T = \tilde {\mathscr F} (\lambda w)$ as submodules of $(\Phi L)_T$. By the functoriality in $\oo$ stated in Theorem \ref{thm: MP}, we know that for all $d\in \mathscr D$, the element $\mathscr G ^{-1} (  d\otimes _{T,m_{\lambda}}T )\in \RZ(T) $ is equal to the image of $\mathscr G^{-1} (d)$ under $\RZ(T) \xrightarrow{(m_{\lambda})_*} \RZ(T)$. It follows that $(\mathscr G^{-1} \circ \mathscr F)(\lambda w)$ is equal to $\lambda$ times the tangent vector $(\mathscr G^{-1} \circ \mathscr F)( w)$.
 
 We are left to check the additivity of $\mathscr G^{-1} \circ \mathscr F$. Let $w_1, w_2 \in ( \Fil ^1 (\Phi L)_k)^{\perp}$. Let $\mathscr D_i, \mathscr F_i,\mathscr G_i $ be the analogues of $\mathscr D, \mathscr F, \mathscr G$ respectively with $T$ replaced by $T_i$, for $i=1,2$. Also let $f_{\tilde T}$ be as in Theorem \ref{thm: MP} (with $\oo = \tilde T$, where $\ker(\tilde T\to k) $ is equipped with the unique nilpotent divided power structure.) Let $d_i : = \mathscr F_i (w_i), ~ i =1,2$. Then $d_i = \mathrm{span}_{T_i} (v_0\otimes 1 + w_i \otimes \epsilon_i)$.
  We easily see that the assertion $(\mathscr G^{-1} \circ \mathscr F) (w_1 +w_2) = (\mathscr G^{-1} \circ \mathscr F)(w_1) + (\mathscr G^{-1} \circ \mathscr F)(w_2) $
follows from the following claim:

\textbf{Claim.} Under (\ref{fiber prod}), the element 
$ (\mathscr G_1^{-1} (d_1),\mathscr G_2^{-1} (d_2) )$
 is sent to the element 
 $$ f_{\tilde T}^{-1} (\mathrm {span}_{\tilde T} \set{v_0\otimes 1 + w_1\otimes \epsilon_1 + w_2\otimes \epsilon_2}).$$

 We now prove the claim. Let $\tilde d$ be such that the element $ (\mathscr G_1^{-1} (d_1),\mathscr G_2^{-1} (d_2) )$ is sent under (\ref{fiber prod}) to $f_{\tilde T}^{-1} (\tilde d)$.  Thus $\tilde d $ is an isotropic line in $ (\Phi L) _{\tilde T}$. By the functoriality stated in Theorem \ref{thm: MP} and the functorial definition of (\ref{fiber prod}), we see that $\tilde d$ is characterized by the condition that $\tilde d \otimes _{\tilde T} T_i = d_i, ~ i = 1,2,$ where the tensor product is with respect to the the structure map $\tilde T\to T_i$ expressing $\tilde T$ as the fiber product of $T_1,T_2$ (i.e. reduction modulo $\epsilon_j$ for $j \neq i$). Using this characterization of $\tilde d$, we see that $\tilde d$ is as predicted in the claim.
\end{proof}
\begin{cor}\label{conclusion of tgt}
	The tangent space $\mathcal T_{x_0} \RZ_k$ is isomorphic to $$(\Fil^1 (\Phi L)_k)^{\perp }/\Fil^1 (\Phi L)_k.$$ Under this isomorphism, the subspace $\mathcal T_{x_0} \mathcal Z(u)_k$ of $\mathcal T_{x_0} \RZ_k$ is identified with 
	$$\set{\bar u, \Fil ^1 (\Phi L)_k} ^{\perp} /\Fil^1 (\Phi L)_ k.$$
\end{cor}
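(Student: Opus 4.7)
The plan is to simply assemble the pieces already established. By Theorem \ref{thm: MP} applied to $\oo = T = k[\epsilon]/\epsilon^2$, we have a bijection
\[
\mathscr G : \mathcal T_{x_0} \RZ_k \isom \mathscr D
\]
which restricts to a bijection $\mathcal T_{x_0}\mathcal Z(u)_k \isom \mathscr D_u$. By Corollary \ref{defn of mathscr F}, we have a bijection
\[
\mathscr F : (\Fil^1 (\Phi L)_k)^\perp / \Fil^1 (\Phi L)_k \isom \mathscr D
\]
which restricts to a bijection from $\{\bar u, \Fil^1(\Phi L)_k\}^\perp / \Fil^1 (\Phi L)_k$ onto $\mathscr D_u$. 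Composing gives a bijection
\[
\mathscr G^{-1} \circ \mathscr F : (\Fil^1 (\Phi L)_k)^\perp / \Fil^1 (\Phi L)_k \isom \mathcal T_{x_0} \RZ_k
\]
carrying $\{\bar u, \Fil^1(\Phi L)_k\}^\perp / \Fil^1 (\Phi L)_k$ onto $\mathcal T_{x_0}\mathcal Z(u)_k$.

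It remains to upgrade this set-theoretic bijection to an isomorphism of $k$-vector spaces. The source is manifestly a $k$-vector space, and the target has its natural $k$-vector space structure coming from viewing $\mathcal T_{x_0}\RZ_k$ as the preimage of $\{x_0\}$ under $\RZ(T)\to \RZ(k)$. The key input is precisely Lemma \ref{k linear}, which asserts that $\mathscr G^{-1} \circ \mathscr F$ is $k$-linear. Thus $\mathscr G^{-1} \circ \mathscr F$ is a $k$-linear isomorphism, and the restricted bijection on the special cycle side is a $k$-linear isomorphism between the corresponding subspaces. This completes the proof.

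There is no real obstacle here: the statement is essentially a bookkeeping consequence of Theorem \ref{thm: MP}, Corollary \ref{defn of mathscr F}, and Lemma \ref{k linear}. The conceptually nontrivial point, already carried out in Lemma \ref{k linear}, is the verification that the explicit linear parametrization $\tilde{\mathscr F}$ of liftings of $\Fil^1(\Phi L)_k$ is compatible with the intrinsic $k$-vector space structure on tangent vectors defined via fiber products $T_1 \times_k T_2 \cong k[\epsilon_1,\epsilon_2]/(\epsilon_1,\epsilon_2)^2$; this uses the functoriality of $f_{\oo}$ in $\oo \in \mathscr C$ provided by Theorem \ref{thm: MP}(1).
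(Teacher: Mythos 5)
Your proposal is correct and follows exactly the paper's own argument: the paper's proof likewise cites Corollary \ref{defn of mathscr F}, Lemma \ref{k linear}, and the bijectivity from Theorem \ref{thm: MP}. You have merely spelled out the assembly in slightly more detail, correctly identifying Lemma \ref{k linear} as the point where the set-theoretic bijection is upgraded to a $k$-linear isomorphism.
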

\begin{proof}
	This follows from Corollary \ref{defn of mathscr F}, Lemma \ref{k linear}, and the bijectivity of $\mathscr G^{-1}$ asserted in Theorem {\ref{thm: MP}}. 
\end{proof}
\begin{lem}\label{linear alg}
	Let $\Lambda\subset V^{\Phi} _K$ be a vertex lattice. Let $L$ be a self-dual $W$-lattice in $V_K$ such that $\Lambda^{\vee}_W \subset L \subset \Lambda_W$. Let $A$ be the image of $\Lambda^{\vee}_W$ in $L_k$. Then the following statements hold.
	\begin{enumerate}
		\item $\dim_k \Lambda_W/ L = \dim _k L /\Lambda_W^{\vee}$. Here both spaces are vector spaces over $k$ because $p \Lambda_W \subset \Lambda_W^{\vee} \subset L$ and $p L \subset p \Lambda_W \subset \Lambda_W^{\vee}$.  
		\item $A \supset A^{\perp}$. Here $A^{\perp}$ is the orthogonal complement of $A$ in $L_k$.
	\end{enumerate}
\end{lem}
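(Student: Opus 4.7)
The plan is to identify the right module-theoretic structures so that each assertion reduces to elementary bilinear algebra over $k$, working with two different quotients: $\Omega = \Lambda_W/\Lambda_W^{\vee}$ for part (1), and $L_k = L/pL$ for part (2).

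For part (1), I would pass to the quotient $\Omega := \Lambda_W/\Lambda_W^{\vee}$, which by Definition~\ref{def: vertex lattice} is a $k$-vector space equipped with the non-degenerate bilinear form induced by $p\cdot q$ on $V_K$. The chain $\Lambda_W^{\vee} \subset L \subset \Lambda_W$ yields a $k$-subspace $\bar L := L/\Lambda_W^{\vee}$ of $\Omega$. The key observation is that $\bar L$ is Lagrangian in $\Omega$: its orthogonal complement is
\[
\{v \in \Lambda_W : \langle v, L\rangle \subset p^{-1}W \text{ with } p\cdot \langle v, L\rangle \subset pW \}/\Lambda_W^{\vee} = (L^{\vee}\cap \Lambda_W)/\Lambda_W^{\vee},
\]
which equals $\bar L$ because $L = L^{\vee} \subset \Lambda_W$. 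Hence $\dim_k \bar L = \tfrac{1}{2}\dim_k \Omega = \dim_k(\Omega/\bar L)$, which is exactly the desired equality $\dim_k L/\Lambda_W^{\vee} = \dim_k \Lambda_W/L$.

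For part (2), the self-duality of $L$ implies that the pairing on $V_K$ restricts to a perfect $W$-pairing on $L$, which descends to a non-degenerate bilinear form on $L_k = L/pL$ over $k$. I would compute the orthogonal complement $A^{\perp}$ of $A = \Lambda_W^{\vee}/pL$ directly: using $(\Lambda^{\vee})^{\vee} = \Lambda$, one has
\[
A^{\perp} = \{v \in L : \langle v, \Lambda_W^{\vee}\rangle \subset pW\}/pL = (L \cap p\Lambda_W)/pL = p\Lambda_W/pL,
\]
where the last equality uses $p\Lambda_W \subset \Lambda_W^{\vee} \subset L$. The inclusion $A^{\perp} \subset A$ then reads $p\Lambda_W \subset \Lambda_W^{\vee}$, which is exactly the defining vertex lattice condition of Definition~\ref{def: vertex lattice}.

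I do not anticipate any real obstacle here; the only point to be careful about is to distinguish the two reductions (mod $\Lambda_W^{\vee}$ versus mod $pL$) and the corresponding normalizations of the quadratic form ($p\cdot q$ on $\Omega$ versus the restriction of $q$ to $L$).
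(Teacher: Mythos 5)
Your proof is correct and follows essentially the same route as the paper: part (1) by observing that $L/\Lambda_W^{\vee}$ is Lagrangian in $\Omega$ for the rescaled pairing $p\langle\cdot,\cdot\rangle$, and part (2) by computing $A^{\perp}=p\Lambda_W/pL$ and invoking $p\Lambda_W\subset\Lambda_W^{\vee}$. The only difference is that you spell out the orthogonal-complement computations which the paper leaves implicit.
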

\begin{proof}
	\textbf{(1)} Consider the $W$-bilinear pairing $$\Lambda_W \times \Lambda_W \to W $$ $$(x,y) \mapsto p \lprod{x,y},$$ where $\lprod{,}$ is the $K$-bilinear form on $V_K^{\Phi } \otimes _{\QQ_p} K = V_K$. We get an induced $k$-quadratic space structure on $\Lambda_W/\Lambda_W^{\vee} $. The image of $L$ in $\Lambda_W/\Lambda_W^{\vee}$ is equal to the orthogonal complement of itself, i.e. it is a Lagrangian subspace. Claim (1) follows.
	
	\textbf{(2)} By definition $A^{\perp}$ is the image in $L_k$ of the $W$-submodule $p\Lambda_W^{\vee \vee} = p \Lambda_W$ of $L$.  We have $p\Lambda_W \subset \Lambda_W^{\vee}$, so $A^{\perp} $ lies in the image of $\Lambda_W^{\vee}$ in $L_k$, which is $A$.   
\end{proof}

\begin{prop}\label{prop: calc of dim of tgt} Let $\Lambda \subset V_K^{\Phi}$ be a vertex lattice of type $t$ (so $t\geq 2$ is even). For all $x_0 \in  \RZ_{\Lambda} (k)$, we have 
	$$\dim_k \mathcal T_{x_0} \RZ_{\Lambda,k}  = t/2-1. $$
\end{prop}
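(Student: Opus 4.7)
The plan is to reduce the computation of $\mathcal T_{x_0} \RZ_{\Lambda,k}$ to the single-vector tangent space formula in Corollary \ref{conclusion of tgt}, and then to perform a short linear-algebra computation on the self-dual $W$-lattice $\Phi L$. Let $L$ be the special lattice corresponding to $x_0$ under (\ref{eq:RZk}). Since $x_0 \in \RZ_\Lambda(k) = \mathcal Z(\Lambda^\vee)(k)$, Remark \ref{rem: v and L} gives $\Lambda^\vee_W \subset L \cap \Phi L$. Moreover, $\Phi$ preserves the vertex lattice $\Lambda$, so $\Phi \Lambda_W = \Lambda_W$, and hence $L \subset \Lambda_W$ implies $\Phi L \subset \Lambda_W$. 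Thus $\Phi L$ is a self-dual $W$-lattice sandwiched as $\Lambda^\vee_W \subset \Phi L \subset \Lambda_W$, and Lemma \ref{linear alg} applies to $\Phi L$.

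Next, pick a $\ZZ_p$-basis $v_1, \ldots, v_n$ of $\Lambda^\vee$. Because $\RZ_\Lambda = \mathcal Z(\Lambda^\vee) = \bigcap_i \mathcal Z(v_i)$ as formal subschemes of $\RZ$, the tangent space at $x_0$ is the intersection of the tangent spaces $\mathcal T_{x_0} \mathcal Z(v_i)_k$. By Corollary \ref{conclusion of tgt}, this yields
\begin{equation*}
\mathcal T_{x_0} \RZ_{\Lambda, k} \;=\; \bigl( A + \Fil^1 (\Phi L)_k \bigr)^{\perp} \big/ \Fil^1 (\Phi L)_k,
\end{equation*}
where $A$ is the image of $\Lambda^\vee_W$ in $(\Phi L)_k$ (i.e.\ the $k$-span of $\bar v_1, \ldots, \bar v_n$).

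The main step is now to identify $A + \Fil^1 (\Phi L)_k$ with $A$ itself. On the one hand, Lemma \ref{linear alg}(2) applied to $\Phi L$ gives $A^\perp \subset A$. On the other hand, for each $v_i \in \Lambda^\vee$ we have $x_0 \in \mathcal Z(v_i)(k)$, so Remark \ref{perp} shows that $\bar v_i$ is orthogonal to $\Fil^1(\Phi L)_k$; hence $\Fil^1(\Phi L)_k \subset A^\perp \subset A$. Consequently $A + \Fil^1(\Phi L)_k = A$, and
\begin{equation*}
\mathcal T_{x_0} \RZ_{\Lambda, k} \;=\; A^\perp \big/ \Fil^1(\Phi L)_k.
\end{equation*}

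It remains to count dimensions. The chain $p\Phi L \subset p\Lambda_W \subset \Lambda^\vee_W \subset \Phi L$, together with Lemma \ref{linear alg}(1) applied to $\Phi L$, gives $\dim_k \Phi L / \Lambda^\vee_W = \dim_k \Lambda_W/\Phi L = t/2$, so
\begin{equation*}
\dim_k A \;=\; \dim_k \Lambda^\vee_W/p\Phi L \;=\; n - t/2, \qquad \dim_k A^\perp \;=\; t/2.
\end{equation*}
Since $\Fil^1(\Phi L)_k$ is the one-dimensional line determined by $\mu$ (as in Definition \ref{defn of Fil^1}), we conclude $\dim_k \mathcal T_{x_0} \RZ_{\Lambda, k} = t/2 - 1$. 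The only real subtlety is verifying that $\Phi L$ is admissible in Lemma \ref{linear alg}, which uses $\Phi$-stability of $\Lambda_W$; everything else is bookkeeping.
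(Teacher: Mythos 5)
Your proof is correct and follows essentially the same route as the paper's: reduce to the tangent-space formula of Corollary \ref{conclusion of tgt}, use Remark \ref{perp} and Lemma \ref{linear alg}(2) to see $\Fil^1(\Phi L)_k \subset A^\perp \subset A$ so that the relevant perp is just $A^\perp$, and then count dimensions via Lemma \ref{linear alg}(1). The only (harmless) additions are that you spell out the intersection-of-tangent-spaces reduction to single vectors and verify explicitly that $\Phi L$ satisfies the sandwiching hypothesis $\Lambda^\vee_W \subset \Phi L \subset \Lambda_W$, both of which the paper leaves implicit.
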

\begin{proof} 
	Let $L$ be the special lattice associated to $x_0$ under (\ref{eq:RZk}), and let $\Fil^1 (\Phi L)_k$ be as in Definition \ref{defn of Fil^1}. Then $\Lambda^{\vee} _W \subset L \cap \Phi L $. Denote by $A$ the image of $\Lambda_W^{\vee}$ in $(\Phi L)_k$. Then $A$ is orthogonal to $\Fil^1 (\Phi L)_k$ by Remark \ref{perp}. By Corollary \ref{conclusion of tgt}, we have an isomorphism of $k$-vector spaces $$\mathcal T_{x_0} \RZ_{\Lambda,k} \cong  \set{A, \Fil^1 (\Phi L)_k} ^{\perp} / \Fil^1 (\Phi L)_k .$$  Since $A$ is orthogonal to $\Fil^1 (\Phi L)_k$, we have $A \supset \Fil^1 (\Phi L)_k$ by Lemma \ref{linear alg} applied to the self-dual $W$-lattice $\Phi L$. Therefore $\mathcal T_{x_0}\RZ_{\Lambda,k} \cong A^{\perp} / \Fil^1 (\Phi L)_k$. Since the bilinear pairing on $(\Phi L)_k$ is non-degenerate, we have $\dim _k \mathcal T_{x_0}\RZ_{\Lambda,k} = \dim_k (\Phi L)_k - \dim_k A - 1 = \dim_k (\Phi L/\Lambda_W^{\vee}) -1$. By claim (1) in Lemma \ref{linear alg} (applied to $\Phi L$), we have $\dim_k (\Phi L/\Lambda_W^{\vee}) = t/2$.
\end{proof}

\begin{cor}\label{cor:regularity}
	Let $\Lambda \subset V^{\Phi} _K$ be a vertex lattice. The formal scheme $\RZ_{\Lambda} \times_{\Spf W} \spec k$ is regular.
\end{cor}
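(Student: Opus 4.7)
The plan is to combine the tangent-space computation of Proposition \ref{prop: calc of dim of tgt} with the Howard--Pappas description (\S \ref{sec:str}) of the reduced subscheme $\RZ_\Lambda^{\mathrm{red}}$. The strategy is: at every closed point $x_0$ of the special fiber, sandwich the Krull dimension of the completed local ring between the dimension of its reduction (from below) and the embedding dimension (from above), and check that both extremes equal $t_\Lambda/2 - 1$, whence regularity follows from the standard criterion $\dim = \dim_k \mathfrak m/\mathfrak m^2$.

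First I would fix a closed point $x_0 \in \RZ_\Lambda$ and set $B := \widehat{\mathcal O}_{\RZ_\Lambda \times_{\Spf W} \spec k,\, x_0}$. By definition $\dim_k \mathfrak m_B/\mathfrak m_B^2 = \dim_k \mathcal T_{x_0} \RZ_{\Lambda, k}$, which equals $t_\Lambda/2 - 1$ by Proposition \ref{prop: calc of dim of tgt}. This already yields the upper bound $\dim B \leq t_\Lambda/2 - 1$, so it remains only to establish the matching lower bound.

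For the lower bound I would use that $\dim B = \dim B^{\mathrm{red}}$, and that $B^{\mathrm{red}}$ is the completed local ring of $\RZ_\Lambda^{\mathrm{red}}$ at $x_0$, since reducing the special fiber of $\RZ_\Lambda$ gives $\RZ_\Lambda^{\mathrm{red}}$. Via the isomorphism $p^\ZZ \backslash \RZ_\Lambda^{\mathrm{red}} \cong S_\Lambda$ of (\ref{eq:rzlambda}) and the fact (\S \ref{sec:variety-s_lambda}) that each component $S_\Lambda^\pm$ is smooth projective of dimension $t_\Lambda/2 - 1$, the scheme $\RZ_\Lambda^{\mathrm{red}}$ is smooth and equidimensional of the same dimension at every closed point. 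Therefore $\dim B^{\mathrm{red}} = t_\Lambda/2 - 1$, which combined with the upper bound forces $\dim B = \dim_k \mathfrak m_B/\mathfrak m_B^2 = t_\Lambda/2 - 1$, so $B$ is a regular local ring.

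Since $x_0$ was arbitrary, the regularity of $\RZ_\Lambda \times_{\Spf W} \spec k$ follows. All the substantive content lies in Proposition \ref{prop: calc of dim of tgt} (and, upstream, in Theorem \ref{thm: MP}); the remaining argument is the elementary sandwiching of Krull dimensions using the smoothness of $S_\Lambda$ already imported from \cite{Howard2015}, so I do not anticipate any real obstacle in this corollary itself.
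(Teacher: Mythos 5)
Your proof is correct and follows essentially the same route as the paper's: the lower bound on the Krull dimension comes from the smooth reduced subscheme $\RZ_\Lambda^{\mathrm{red}}\cong$ (two copies of) $S_\Lambda$ of dimension $t_\Lambda/2-1$, the upper bound comes from the tangent space computation in Proposition \ref{prop: calc of dim of tgt}, and regularity follows from the equality $\dim = \dim_k \mathfrak m/\mathfrak m^2$. No issues.
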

\begin{proof}Let $t$ be the type of $\Lambda$.	Denote $X: = \RZ_{\Lambda} ^{\mathrm{red} } $ and $Y: = \RZ_{\Lambda} \times_{\Spf W} \spec k$. Then $X$ is a formal subscheme of $Y$ over $k$. Recall from \S \ref{sec:str} that $X$ is a smooth $k$-scheme of dimension $t/2-1$. It follows that for all $x_0 \in Y(k)$, the complete local ring of $Y$ at $x_0$ is of dimension $\geq t/2-1$. By Proposition \ref{prop: calc of dim of tgt}, the tangent space of $Y$ at $x_0$ has $k$-dimension equal to $t/2 -1$. Hence $Y$ is regular at $x_0$.        
\end{proof}

\begin{thm}\label{thm:reducedminusculecyle}
	Let $\Lambda \subset V^{\Phi} _K$ be a vertex lattice. Then $\RZ_{\Lambda} = \RZ_{\Lambda}^{\mathrm{red}}$ and is of characteristic $p$.
\end{thm}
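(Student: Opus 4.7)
\textbf{Proof plan for Theorem \ref{thm:reducedminusculecyle}.}

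The plan is to reduce both assertions to the single statement that $p$ acts as zero on the structure sheaf $\oo_{\RZ_\Lambda}$. Granting this, $\RZ_\Lambda$ coincides with the scheme $\RZ_\Lambda \times_{\Spf W} \spec k$, which is regular by Corollary~\ref{cor:regularity}, hence reduced, hence equal to its reduced subscheme $\RZ_\Lambda^{\mathrm{red}}$; both conclusions follow simultaneously.

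To establish the vanishing of $p$, I verify it in every completed local ring at a $k$-point and then globalize. Fix $x_0 \in \RZ_\Lambda(k)$ and set $R = \widehat{\oo}_{\RZ_\Lambda, x_0}$ and $\bar R = R/pR$. By Corollary~\ref{cor:regularity}, $\bar R$ is a regular local $k$-algebra, so $\bar R \cong k[[X_1,\ldots,X_d]]$ for some $d$. Lifting a regular system of parameters to $R$ yields a continuous $W$-algebra map $\phi: W[[X_1,\ldots,X_d]] \to R$ that becomes an isomorphism modulo $p$, and is therefore surjective by the standard Nakayama-type criterion for complete local rings. Let $I = \ker \phi$; since $\phi$ is an isomorphism mod $p$, $I$ is contained in $p W[[X_1,\ldots,X_d]]$. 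Using that $p$ is a nonzerodivisor, write $I = pJ$ for a unique ideal $J \subset W[[X_1,\ldots,X_d]]$. The desired $pR = 0$ is then equivalent to $p \in I$, i.e., to $J = W[[X_1,\ldots,X_d]]$.

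Assume for contradiction that $J$ is proper, so $J \subset (p, X_1, \ldots, X_d)$, and every $g \in J$ has constant term $g(0) \in pW$. Define the $W$-algebra map $\psi: W[[X_1,\ldots,X_d]] \to W/p^2$ sending each $X_i \mapsto 0$; then for any generator $pg$ of $I$,
\[
\psi(pg) = p\cdot g(0) \equiv 0 \pmod{p^2},
\]
so $\psi$ kills $I$ and factors through $R$ as a local $W$-algebra homomorphism $R \to W/p^2$. This yields a $W/p^2$-point of $\RZ_\Lambda$, contradicting Proposition~\ref{prop:Wp2points}. Hence $J = W[[X_1,\ldots,X_d]]$, and $p = 0$ in $R$.

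The main obstacles I anticipate are formal rather than substantive: identifying $W/p^2$-points of $\RZ_\Lambda$ with local $W$-algebra maps $\widehat{\oo}_{\RZ_\Lambda, x_0} \to W/p^2$ for $x_0 \in \RZ_\Lambda(k)$ (immediate from the universal property of formal completion), and promoting pointwise vanishing of $p$ to vanishing on $\oo_{\RZ_\Lambda}$ globally, which follows from the local Noetherian nature of $\RZ_\Lambda$ via the injectivity of the canonical map into the product of completed stalks at closed points over any quasi-compact affine open.
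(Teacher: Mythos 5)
Your proposal is correct and follows the same route as the paper: combine Proposition \ref{prop:Wp2points} (no $W/p^2$-points) with Corollary \ref{cor:regularity} (regular special fiber) to conclude $\RZ_\Lambda$ equals its special fiber, hence is reduced. The only difference is that the paper outsources the commutative-algebra core — which you prove by hand via the surjection $W[[X_1,\dots,X_d]]\twoheadrightarrow \widehat{\oo}_{\RZ_\Lambda,x_0}$ and the constructed map to $W/p^2$ — to the citation \cite[Lemma 10.3]{RTZ}, whose proof is essentially your argument.
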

\begin{proof}
 $\RZ_{\Lambda}$ does not admit $W/p^2$-points (Proposition \ref{prop:Wp2points}) and its special fiber is regular (Corollary \ref{cor:regularity}). It follows from \cite[Lemma 10.3]{RTZ} that $\RZ_{\Lambda}$ is equal to its special fiber. Being regular itself, $\RZ_{\Lambda} $ is reduced.
\end{proof}

\section{The intersection length formula}\label{sec:inters-length-form}

\subsection{The arithmetic intersection as a fixed point scheme}\label{subsection: lth}
Recall from \S \ref{sec:goal} that we are interested in computing the intersection of $\RZ^g$ and $\delta(\RZ^{\flat})$, for $g\in J_b (\QQ_p)$.

\begin{prop}\label{prop:concentration weak}
	Assume $g\in J_b(\mathbb{Q}_p)$ is regular semisimple. Then $\delta(\RZ^\flat)\cap \RZ^g$ is contained in $\mathcal{Z}(\mathbf{v}(g))$, where $\mathbf v (g) = ( x_n, gx_n,\cdots, g^{n-1} x_n)$.
\end{prop}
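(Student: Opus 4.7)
The plan is to first replace $\delta(\RZ^{\flat})$ by the larger closed formal subscheme $\mathcal Z(x_n)$ via the inclusion $\delta(\RZ^{\flat}) \subseteq \mathcal Z(x_n)$ from Lemma \ref{lem:specialdivisor}, and then to prove the stronger statement
$$\mathcal Z(x_n) \cap \RZ^g \subseteq \mathcal Z(\mathbf v(g)).$$
The strategy is to extract two concrete lifting properties for quasi-endomorphisms of the universal $p$-divisible group $X$ — one from each factor — and then combine them multiplicatively, using that $g$ acts on $V_K^{\Phi}$ by conjugation in the Clifford algebra.

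I would first recall that for $g \in J_b(\QQ_p) = \GSpin(V_K^{\Phi})(\QQ_p)$, the action on $V_K^{\Phi}$ is conjugation inside the Clifford algebra. Thus the vector $g^i x_n \in V_K^{\Phi}$ is literally the conjugate $g^i x_n g^{-i}$ in $C(V_K^{\Phi}) \subset \End^0(\mathbb X_0)$. Working over the intersection $\mathcal Z(x_n) \cap \RZ^g$, the two factors then give: (a) from $\mathcal Z(x_n)$ and Definition \ref{def:specialcycle}, the quasi-endomorphism $\rho \circ x_n \circ \rho^{-1}$ of the universal $X$ lifts to an element of $\End(X)$; (b) from $\RZ^g$, the universal point is fixed by the $J_b(\QQ_p)$-action, so by rigidity of quasi-isogenies there is an automorphism $\alpha$ of $X$ with $\alpha = \rho \circ g \circ \rho^{-1}$ as quasi-endomorphisms. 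In particular $\rho \circ g^{\pm i} \circ \rho^{-1}$ lifts to an automorphism of $X$ for every integer $i$.

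Combining (a) and (b), the identity in $\End^0(X)$
$$\rho \circ (g^i x_n g^{-i}) \circ \rho^{-1} \;=\; (\rho \circ g \circ \rho^{-1})^i \circ (\rho \circ x_n \circ \rho^{-1}) \circ (\rho \circ g \circ \rho^{-1})^{-i}$$
exhibits each component of $\mathbf v(g)$ as a composition of honest (auto)morphisms of $X$, and hence lies in $\End(X)$. Applying this for $i=0,1,\ldots,n-1$ and invoking Definition \ref{def:specialcycle} yields $\mathcal Z(x_n) \cap \RZ^g \subseteq \mathcal Z(\mathbf v(g))$, completing the proof.

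The only mildly delicate point — and the one I would take most care over — is item (b): one must check that being fixed by $g$ as a \emph{scheme-theoretic} condition (over an arbitrary test algebra, not just on $k$-points) really forces $\rho g \rho^{-1}$ to lift to an automorphism of the universal $X$ over $\RZ^g$. This is standard from the moduli description of $\RZ$ as a closed formal subscheme of $\RZ(\mathbb X_0,\lambda_0)$, where the $J_b(\QQ_p)$-action comes from precomposition of the quasi-isogeny with quasi-isogenies of $\mathbb X_0$, and rigidity of quasi-isogenies supplies the required automorphism uniquely. I note that the regular semisimplicity hypothesis is not strictly used in the containment itself; it enters only because it ensures $\mathbf v(g)$ is a full $\QQ_p$-basis of $V_K^{\Phi}$, which is what makes $\mathcal Z(\mathbf v(g))$ the geometrically meaningful cycle used in the subsequent minuscule analysis.
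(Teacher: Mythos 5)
Your proposal is correct and is essentially the paper's argument: the paper likewise reduces to $\mathcal{Z}(x_n)\cap\RZ^g$ via Lemma \ref{lem:specialdivisor} and then deduces $\mathcal{Z}(x_n)\cap\RZ^g\subseteq\mathcal{Z}(gx_n)$ "by the definition of special cycles," iterating $n-1$ times — which is exactly the conjugation mechanism you spell out. Your explicit identity $\rho(g^ix_ng^{-i})\rho^{-1}=(\rho g\rho^{-1})^i(\rho x_n\rho^{-1})(\rho g\rho^{-1})^{-i}$ just does all the iterations at once, and your care over the scheme-theoretic meaning of being $g$-fixed is a reasonable elaboration of what the paper leaves implicit.
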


\begin{proof}
	By Lemma \ref{lem:specialdivisor}, we have $\delta(\RZ^\flat)\subseteq \mathcal{Z}(x_n)$. Hence $\delta(\RZ^\flat)\cap \RZ^g\subseteq \mathcal{Z}(x_n)\cap \RZ^g\subseteq \mathcal{Z}(gx_n)$ by the definition of special cycles. Repeating this procedure we obtain $$\delta(\RZ^\flat)\cap\RZ^g\subseteq \mathcal{Z}(x_n)\cap \mathcal{Z}(gx_n)\cap\cdots\cap\mathcal{Z}(g^{n-1}x_n)=\mathcal{Z}(\mathbf{v}(g)).\qedhere$$ 
\end{proof}
\begin{cor}\label{concentration}
Assume $g\in J_b(\QQ_p)$ is regular semi-simple and minuscule. Then $$\delta(\RZ^\flat)\cap \RZ^g \subset \RZ _{L(g) ^{\vee}} = \RZ_{L(g) ^{\vee}} ^{\mathrm{red}}.$$ In particular, $\delta(\RZ^\flat)\cap \RZ^g$ is a scheme of characteristic $p$.
\end{cor}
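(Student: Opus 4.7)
The plan is to chain together three results already established in the paper. First, I would invoke Proposition \ref{prop:concentration weak} to obtain the inclusion
\[
\delta(\RZ^\flat)\cap \RZ^g \subseteq \mathcal{Z}(\mathbf{v}(g)).
\]
This is the geometric content and provides the bridge from the fixed-locus intersection to a special cycle.

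Next, I would identify $\mathcal Z(\mathbf v(g))$ with $\RZ_{L(g)^\vee}$. By hypothesis $g$ is regular semisimple and minuscule, which by Definition \ref{def:relative regular} means $\mathbf v(g)=(x_n,gx_n,\ldots,g^{n-1}x_n)$ is an $n$-tuple in $V_K^\Phi$ with $T(\mathbf v(g))$ non-singular and $\mathbf v(g)$ minuscule in the sense of Definition \ref{def:minuscule}. Since $L(\mathbf v(g)) = L(g)$ by construction, Remark \ref{rem:minsculespecialcyle} applies to show that $L(g)^\vee$ is a vertex lattice and
\[
\mathcal Z(\mathbf v(g)) \;=\; \RZ_{L(g)^\vee}.
\]

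Finally, I would apply Theorem \ref{thm:reducedminusculecyle} (Theorem B) to the vertex lattice $\Lambda = L(g)^\vee$, which yields $\RZ_{L(g)^\vee} = \RZ_{L(g)^\vee}^{\mathrm{red}}$ and asserts that this formal scheme is of characteristic $p$. Combining these three inputs gives
\[
\delta(\RZ^\flat)\cap \RZ^g \;\subseteq\; \RZ_{L(g)^\vee} \;=\; \RZ_{L(g)^\vee}^{\mathrm{red}},
\]
and since $\RZ_{L(g)^\vee}^{\mathrm{red}}$ is a $k$-scheme, the intersection $\delta(\RZ^\flat)\cap\RZ^g$, being a closed formal subscheme, is automatically of characteristic $p$.

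There is no real obstacle here: all the substantive work has been done in Proposition \ref{prop:concentration weak} (an elementary consequence of $\delta(\RZ^\flat)\subseteq \mathcal Z(x_n)$ together with $g$-invariance) and in Theorem B, whose proof occupied the whole of Section \ref{sec:reduc-minusc-spec}. The only point that merits a sentence of justification is the identification $L(\mathbf v(g))^\vee = L(g)^\vee$ as vertex lattices and the verification that $\mathbf v(g)$ is minuscule in the sense of Definition \ref{def:minuscule}, both of which are immediate from the definitions adopted in Definition \ref{def:relative regular}.
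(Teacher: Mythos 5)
Your proposal is correct and follows essentially the same route as the paper: the paper's own proof cites exactly the same three ingredients (Proposition \ref{prop:concentration weak}, Remark \ref{rem:minsculespecialcyle}, and Theorem \ref{thm:reducedminusculecyle}) and then concludes, as you do, that a closed formal subscheme of a characteristic-$p$ scheme is itself a scheme of characteristic $p$. Nothing is missing.
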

\begin{proof}
	The first statement is an immediate consequence of Remark \ref{rem:minsculespecialcyle}, Theorem \ref{thm:reducedminusculecyle}, and Proposition \ref{prop:concentration weak}. Now both $\delta (\RZ^{\flat})$ and $\RZ^g$ are closed formal subschemes of $\RZ$, so $\delta (\RZ^{\flat})\cap \RZ^g$ is a closed formal subscheme of the scheme $\RZ _{L(g) ^{\vee}} = \RZ_{L(g) ^{\vee}} ^{\mathrm{red}}$ of characteristic $p$. Hence $\delta (\RZ^{\flat})\cap \RZ^g$ is its self a scheme of characteristic $p$. 
\end{proof}
\subsubsection{}\label{notation}
In the rest of this section we will fix $g \in J_b(\QQ_p)$ regular semisimple and minuscule, and assume $\RZ^g \neq \varnothing$. Take $\Lambda:=L(g)^{\vee}$. Then $\Lambda$ is a vertex lattice stable under $g$, cf. Remark \ref{rem:regular}. We are interested in computing the intersection length of $\delta(\RZ^{\flat})$ and $\RZ^g $ around a $k$-point of intersection. Recall the isomorphism (\ref{eq:rzlambda}) between $p^{\ZZ}\backslash \RZ_{\Lambda} ^{\mathrm{red}}$ (which we now know is just $p^{\ZZ}\backslash \RZ_{\Lambda}$) and $S_{\Lambda}$. Recall from \S \ref{sec:variety-s_lambda} that $S_{\Lambda}$ is a projective smooth variety over $k$ of dimension $t_{\Lambda} /2 -1$. We write $d = t_{\Lambda}/2$. Let $\Omega_0 = \Lambda / \Lambda^{\vee} $ and $\Omega = \Omega_0 \otimes _{\FF_p}  k = \Lambda_{W} / \Lambda_W^{\vee}$. Let $\lprod{,}$ be the $k$-bilinear form on $\Omega$ (cf. \S \ref{sec:vertex-lattices}). Let $\mathbb G = \SO(\Omega) , \mathbb G_0 = \SO(\Omega_0)$. Let $\bar g$ be the induced action of $g$ on $\Omega$. Then $\bar g  \in \mathbb G_0(\FF_p) \subset \mathbb G(k)$. \ignore{Recall that $$S_{\Lambda} (k) = \set{\mbox{Lagrangians } \mathcal L \subset \Omega,~ \dim (\mathcal L + \Phi \mathcal L) =d+1 }$$$$ = \set{(\mathcal L_{d-1} ,\mathcal L_d) | \mathcal L_d \subset \Omega \mbox{~is Lagrangian,~} \dim \mathcal L_{d-1} = d-1, \mathcal L_{d-1} \subset \mathcal L_d \cap \Phi \mathcal L_d }.$$}

There is a natural action of $\bar g$ on $S_{\Lambda}$ via its action on $\Omega$. On $R$-points $\bar g$ sends $(\mathcal L_{d-1}, \LL_d)$ to $(\bar g \LL_{d-1}, \bar g \LL_d)$. The latter is indeed a point of $S_{\Lambda}$ because $\bar g \Phi = \Phi \bar g $ by the fact that $\bar g \in \mathbb G_0 (\FF_p)$. The following proposition allows us to reduce the study of intersection multiplicities to the study of the non-reduced structure of $S_{\Lambda} ^{\bar g}$. 

\begin{prop}\label{prop:nonreduced}
 $p^\mathbb{Z}\backslash(\delta (\RZ^{\flat})\cap \RZ^g)\cong S_\Lambda^{\bar g}$.
\end{prop}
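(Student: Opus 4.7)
The plan is to identify $\delta(\RZ^\flat)\cap \RZ^g$ with the fixed-point subscheme $\RZ_\Lambda^g$ of $g$ acting on the scheme $\RZ_\Lambda$, and then transport this via \eqref{eq:rzlambda} to $S_\Lambda^{\bar g}$ after quotienting by $p^{\ZZ}$.

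First I will establish the scheme-theoretic equality
\[
\delta(\RZ^\flat)\cap \RZ^g \;=\; \RZ_\Lambda^g
\]
inside $\RZ$. One inclusion is immediate: by Corollary~\ref{concentration}, $\delta(\RZ^\flat)\cap\RZ^g$ is a closed subscheme of $\RZ_\Lambda$ and tautologically of $\RZ^g$, hence of $\RZ_\Lambda\cap\RZ^g=\RZ_\Lambda^g$. For the reverse inclusion it suffices to show $\RZ_\Lambda\subset \delta(\RZ^\flat)$. Since $\Lambda^\vee=L(g)$ contains $x_n$, by Remark~\ref{rem:minsculespecialcyle} we have $\RZ_\Lambda=\mathcal Z(L(g))\subset \mathcal Z(x_n)$; over this locus the quasi-endomorphism $x_n$ lifts to an honest endomorphism of the universal $p$-divisible group $X$, so the Clifford-idempotent argument in the second half of the proof of Lemma~\ref{lem:specialdivisor} applies verbatim over the base $\RZ_\Lambda$: the algebra $C(x_n)^{\mathrm{op}}\otimes C(x_n)\cong M_2(\ZZ_p)$ acts on $X$, and its two standard idempotents decompose $X\cong X^\flat\oplus X^\flat$. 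This factors $\RZ_\Lambda\hookrightarrow \RZ$ through $\phi(\RZ(\mathbb X_0^\flat,\lambda_0^\flat))$, and combined with the Cartesian diagram~\eqref{eq:funcotrialityRZ} (the content of the lemma preceding Lemma~\ref{lem:specialdivisor}) this yields $\RZ_\Lambda\subset \phi(\RZ(\mathbb X_0^\flat,\lambda_0^\flat))\cap \RZ = \delta(\RZ^\flat)$.

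Next I will match the $g$-action on $\RZ_\Lambda$ with the $\bar g$-action on $S_\Lambda$ under the isomorphism~\eqref{eq:rzlambda}. The element $g\in J_b(\QQ_p)$ stabilizes $\Lambda$ (by Remark~\ref{rem:regular}), so it acts on $\RZ_\Lambda$ and commutes with the central $p^{\ZZ}$-action, inducing an automorphism $\tilde g$ of $S_\Lambda\cong p^{\ZZ}\backslash \RZ_\Lambda$. Both $\tilde g$ and the action of $\bar g\in \mathbb G_0(\FF_p)\subset \mathbb G(k)$ on $S_\Lambda$ are automorphisms of a reduced finite-type $k$-scheme, and they agree on $k$-points: tracing through the bijection~\eqref{special lattices in RZLambda} followed by $L\mapsto L/\Lambda_W^\vee$, the $g$-action sends $L$ to $gL$, whose image in $\Omega$ is exactly $\bar g (L/\Lambda_W^\vee)$. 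Hence $\tilde g=\bar g$, and consequently $(p^{\ZZ}\backslash \RZ_\Lambda)^{\tilde g}\cong S_\Lambda^{\bar g}$.

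Finally, since $p^{\ZZ}$ is central in $J_b(\QQ_p)$ and acts freely on $\RZ_\Lambda$ through the decomposition $\RZ=\coprod_\ell \RZ^{(\ell)}$, taking $p^{\ZZ}$-quotients commutes with taking $g$-fixed-point subschemes, so $p^{\ZZ}\backslash \RZ_\Lambda^g\cong (p^{\ZZ}\backslash\RZ_\Lambda)^{\tilde g}\cong S_\Lambda^{\bar g}$, and combining with the first step yields the proposition. I expect the main obstacle to be the base-change justification of the Clifford-idempotent splitting in the first step: one needs that the decomposition $X=X^\flat\oplus X^\flat$ produced over $\RZ_\Lambda$ is not only pointwise but genuinely lives over this (now known to be reduced, by Theorem~\ref{thm:reducedminusculecyle}) scheme, and that the Cartesian diagram~\eqref{eq:funcotrialityRZ} then lets one read off a factorization through $\delta(\RZ^\flat)$; the remaining steps are essentially formal given the structural results of \S\ref{sec:gspin-rapoport-zink} and Theorem~B.
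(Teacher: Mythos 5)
Your proposal is correct, and its overall skeleton (reduce to the $g$-fixed points of $\RZ_\Lambda$, then transport through \eqref{eq:rzlambda}) matches the paper's; the difference lies in how the key containment $\RZ_\Lambda\subset\delta(\RZ^\flat)$ is justified. You deduce it from the scheme-theoretic equality $\mathcal Z(x_n)=\delta(\RZ^\flat)$ of Lemma \ref{lem:specialdivisor} — in fact you could simply cite that lemma rather than re-running the Clifford-idempotent argument over $\RZ_\Lambda$, since the lemma is already an identity of closed formal subschemes of $\RZ$; the "base-change obstacle" you flag is therefore not really an obstacle, but note that the paper explicitly remarks that it will only ever use the easy inclusion $\delta(\RZ^\flat)\subseteq\mathcal Z(x_n)$. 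The paper instead observes that $\RZ_\Lambda^{\mathrm{red}}$ is reduced (Theorem \ref{thm:reducedminusculecyle}), so the containment in the closed formal subscheme $\delta(\RZ^\flat)$ can be checked on $k$-points, where it is immediate from the special-lattice descriptions: lattices $L\supseteq\Lambda_W^\vee=L(g)_W$ contain $x_n$, hence lie in $\delta(\RZ^\flat)(k)$ by Lemma \ref{lem:Lg}. What each route buys: yours is more direct and does not need reducedness for this particular containment (though Theorem B is still indispensable to identify $\RZ_\Lambda$ with $\RZ_\Lambda^{\mathrm{red}}$ and hence with $S_\Lambda$), while the paper's keeps the logical dependencies lighter by avoiding the Morita/idempotent direction of Lemma \ref{lem:specialdivisor} altogether. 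Your extra care in checking that the $g$-action matches $\bar g$ on $k$-points of a reduced scheme, and that the $p^{\ZZ}$-quotient commutes with taking $g$-fixed points because $p^{\ZZ}$ permutes the components $\RZ^{(\ell)}$ freely while $g$ preserves them, fills in details the paper leaves as an "observation" and is welcome.
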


\begin{proof}
In view of Theorem \ref{thm:reducedminusculecyle}, Corollary \ref{concentration} and the observation that the isomorphism (\ref{eq:rzlambda}) induces an isomorphism $p^{\ZZ} \backslash (\RZ_{\Lambda}^{\mathrm{red} } )^g \isom S_{\Lambda} ^{\bar g} $, it suffices to show 
$$ (p^{\ZZ} \backslash \RZ_{\Lambda}^{\mathrm{red}} ) \cap  ( p^{\ZZ} \backslash \delta(\RZ^{\flat})) = (p^{\ZZ} \backslash \RZ_{\Lambda}^{\mathrm{red}} ). $$ Since both $p^{\ZZ} \backslash \RZ_{\Lambda}^{\mathrm{red}} $ and $ p^{\ZZ} \backslash \delta(\RZ^{\flat})$ are closed formal subschemes of $p^{\ZZ} \backslash \RZ$ and since $p^{\ZZ} \backslash \RZ_{\Lambda}^{\mathrm{red}} $ is a reduced scheme, it suffices to check that $$ p^{\ZZ} \backslash \RZ_{\Lambda} ^{\mathrm{red}} (k) \subset p^{\ZZ } \backslash \delta (\RZ^{\flat}) (k). $$ Now the left hand side consists of special lattices $L$ containing $\Lambda^{\vee}$, and the right hand side consists of special lattices $L$ containing $x_n$ (cf. (\ref{special lattices in RZLambda}) and Lemma \ref{lem:Lg}). We finish the proof by noting that by definition $x_n \in \Lambda^{\vee} = L(g) $. 
\end{proof}


Proposition \ref{prop:nonreduced} reduces the intersection problem to the study of $S_{\Lambda}^{\bar g}$. 

\subsection{Study of $S_{\Lambda}^{\bar g}$}

We continue to use the notation in \S \ref{subsection: lth}. 
We adopt the following notation from \cite[\S 3.2]{HPGU22}.
\begin{defn}
	Let $\OGr(d-1)$ (resp. $\OGr(d)$) be the moduli space of totally isotropic subspaces of $\Omega$ of dimension $d-1$ (resp. $d$). For a finite dimensional vector space $W$ over $k$ and an integer $l$ with $0\leq l \leq \dim W$, we write $\Gr(W,l)$ for the Grassmannian classifying $l$-dimensional subspaces of $W$. Thus for $j \in \set{d-1, d}$ and any $k$-algebra $R$, we have 
	$$\OGr (j) (R) = \set{\mbox{$R$-module local direct summands of $\Omega\otimes _k R$ of local rank $j$ which are totally isotropic}}.$$
	Also $$\Gr(W,l) (R) = \set{\mbox{$R$-module local direct summands of $W\otimes _k R$ of local rank $k$}}.$$
	\ignore{Let $\OGr(d-1, d) \subset \OGr(d-1) \times \OGr(d)$ be the subscheme defined by the incidence relation. Namely, it is the locus where the $(d-1)$-dimensional isotropic subspace is contained in the $d$-dimensional isotropic subspace. }
\end{defn}
\ignore{Recall from loc. cit. that $\OGr(d-1,d) $ has two connected components $\OGr(d-1, d) ^{\pm}$, and the projection to the first factor $\OGr(d-1, d) \to \OGr(d-1)$ restricts to isomorphisms $$\OGr(d-1, d)^+ \isom \OGr(d-1)$$ $$\OGr(d-1, d)^- \isom \OGr(d-1). $$
Also recall from loc. cit. that the natural morphism $S_{\Lambda} \hookrightarrow \OGr(d-1, d)$ is a closed embedding, and $S_{\Lambda}^{\pm} : = S_{\Lambda} \cap \OGr(d-1, d) ^{\pm}$ give the two connected components of $S_{\Lambda}$. In the following, we denote by $S_{\Lambda}^0$}

\ignore{
Following the notation of \cite[\S 3.2]{HPGU22}, we have parabolic subgroups $P_0 , P^+, P^-$ of $\mathbb G$. Here $P_0$ is the stabilizer of the standard $(d-1)$-dimensional isotropic subspace $\lprod{e_1,\cdots, e_{d-1}}$, and $P^+$ (resp. $P^-$) is the stabilizers of the standard $d$-dimensional isotropic subspace $\lprod{e_1,\cdots, e_d}$ (resp. $\lprod{e_1,\cdots, e_{d-1}, f_d}$). We have $P_0  = P^+ \cap P^-$. The Frobenius $\Phi$ interchanges $P^{\pm}$ and stabilizes $P_0$. Let $\OGr(d-1)$ (resp. $\OGr(d)$) be the moduli space of isotropic subspaces of $\Omega$ of dimension $d-1$ (resp. $d$). Using $\lprod{e_1,\cdots, e_{d-1}}$ one identifies $\OGr(d-1)$ with $\mathbb G/P_0$. Using $\lprod{e_1,\cdots, e_d}$ (resp. $\lprod{e_1,\cdots, e_{d-1}, f_d}$) one identifies $\OGr(d)$ with $\mathbb G/P^+$ (resp. $\mathbb G/P^-$).

Consider $$ i^+ =  (\pi ^+ ,\pi^-) : \mathbb G/ P_0 \to \mathbb G/P^+ \times_k \mathbb G/P^-$$ $$g\mapsto (g,g)$$ which is a closed embedding. We let $\Gamma^+_{\Phi} \subset \mathbb G/P^+ \times \mathbb G/P^-$ be the graph of $\Phi: \mathbb G/ P^+ \to \mathbb G/ P^-$. Then by loc. cit. we have \begin{align}\label{HP isom}
S_{\Lambda} ^+ &\isom  (i^+) ^{-1} \Gamma_{\Phi} ^+ \subset \mathbb G/P_0\\
\nonumber S_{\Lambda}\ni \mathcal L = h \lprod{e_1,\cdots, e_d}  &\mapsfrom  h
\end{align}

\begin{rem} Under our identifications $\mathbb G/P_0 = \OGr(d-1)$ and $ \mathbb G/P^{\pm} = \OGr (d)$, the morphism $i^+  : \GG/P_0 \to \mathbb G/P^+ \times_k \mathbb G/P^-$ is equivalent to the morphism $\OGr (d-1) \to \OGr (d) \times_k \OGr(d)$ which sends $\mathcal L_{d-1} \in \OGr (d-1)$ to the pair $(\mathcal L_d^+ ,\mathcal L_d^-)$, such that the two Lagrangians $\set{\mathcal L_d^+, \mathcal L_d^-}$ are the (unique two) Lagrangians containing $\mathcal L_d$, and such that there exists $g\in \mathbb G$ such that $g \lprod{e_1,\cdots, e_d} = \mathcal L_d^+, g\lprod{e_1,\cdots, e_{d-1} , f_d} = \mathcal L_d^-$. (A priori exactly one of the two possible orderings $(\mathcal L_d^+ ,\mathcal L_d^-)$ and $(\mathcal L_d^- ,\mathcal L_d^+)$ satisfies the last condition.) See \cite[\S 3.2]{HPGU22} for more details.
\end{rem}

\begin{prop}\label{tan spaces}
Suppose $x_0 \in S_{\Lambda} ^+ (k) $ corresponds to $(\mathcal L_{d-1}, \mathcal L_d)$. We have \begin{align}\label{tan of S}
\mathcal T _{x_0} S_{\Lambda} ^+ = \mathcal T_{e } ( \mathrm{Stab} _{\Phi \mathcal L_d} \mathbb G / \mathrm{Stab} _{\mathcal L_{d-1}} \mathbb G).
\end{align} 
Suppose in addition that $x_0$ is fixed by $\bar g$. Then $\bar g \in \mathrm{Stab} _{\mathcal L_{d-1}} \mathbb G$. We have
\begin{align}\label{tan of S^g}
\mathcal T_{x_0 }( S_{\Lambda} ^+)^{\bar g} = \mathcal T_{e} (\mathrm{Stab} _{\Phi \mathcal L_d} \mathbb G / \mathrm{Stab} _{\mathcal L_{d-1}} \mathbb G)^{\bar g}, 
\end{align}
where the action of $\bar g$ on $\mathrm{Stab} _{\Phi \mathcal L_d} \mathbb G / \mathrm{Stab} _{\mathcal L_{d-1}} \mathbb G$ is through left multiplication or conjugation, which are the same.
\end{prop}
 \begin{proof}
 	Through (\ref{HP isom}) we regard $S_{\Lambda} ^+ $ as a sub-variety of $\mathbb G/ P_0$, and write points of it as $[h], h\in\mathbb G$. 
 	Assume $x_0 = [h_0] \in S_{\Lambda}^+ (k)$, where $h_0 \in \mathbb G(k) $. We identify the tangent space of $\mathbb G/P_0$ at $[h_0]$ with $\Lie \mathbb G / \Lie P_0$ using left multiplication by $h_0$. We compute $\mathcal T_{x_0} S_{\Lambda}^+$ as a subspace of $\Lie \mathbb G/\Lie P_0$. Because the tangent map of $\Phi: \mathbb G/ P^+ \to \mathbb G/ P^-$ is zero, by (\ref{HP isom}) we have \begin{align}
 	\label{prim tan}
 	\mathcal T_{x_0} S_{\Lambda} ^+ = \set{X\in \Lie \mathbb G/\Lie P_0| ~\mathrm{Im} (X) = 0 \in \Lie \mathbb G/ \Lie P^-} = \Lie P^- / \Lie P_0.
 	\end{align}
 	
 	Now suppose that $x_0$ is fixed by $\bar g$. Then we have 
 	$$ h_0^{-1} \bar g h_0 =: p_0 \in P_0(k). $$ 
 	Under (\ref{prim tan}), the tangent action of $\bar g$ on the $\mathcal T_{x_0} S_{\Lambda} ^+ $ corresponds to the action of $\ad (p_0)$ on $\Lie P^- / \Lie P_0$. Consequently the tangent space of $(S_{\Lambda} ^+)^{\bar g}$ at $x_0$ is
 	\begin{align}\label{prim tan g}
 	\mathcal T_{x_0} (S^+_{\Lambda}) ^{\bar g} = ( \Lie P^-/ \Lie P_0)^{\ad p_0}. 
 	\end{align} 
 	Now by the identifications we have made, the formulas (\ref{prim tan}) and (\ref{prim tan g}) are respectively equivalent to (\ref{tan of S}) and (\ref{tan of S^g}).
 \end{proof}
 
We now study the right hand sides of (\ref{tan of S}) and (\ref{tan of S^g}).

\begin{lem} \label{realization of quotient}
	Let $x_0 = (\mathcal L_{d-1} ,\mathcal L_d) \in S_{\Lambda} ^+ (k)$. The Grassmannian $\Gr (\Phi \mathcal L_d , d-1)$, with the natural action by $\mathrm{Stab} _{\Phi \mathcal L_d} \mathbb G $ and with the distinguished point $\mathcal L_{d-1}\in \Gr (\Phi \mathcal L_d , d-1)(k)$, realizes the quotient $\mathrm{Stab} _{\Phi \mathcal L_d} \mathbb G / \mathrm{Stab} _{\mathcal L_{d-1}} \mathbb G$. 
\end{lem}
\begin{proof}Since $\Phi \mathcal L_{d}$ is a Lagrangian subspace of $\Omega$, we know that the algebraic group $\mathrm{Stab} _{\Phi \mathcal L_d} \mathbb G$ acts on $\Gr (\Phi \mathcal L_d , d-1)$ through its Levi quotient group $\GL (\Phi \mathcal L_d)$. Moreover the isotropic subgroup at $\mathcal L_{d-1} \in \Gr (\Phi \mathcal L_d , d-1)$ is equal to $\mathrm{Stab} _{ \mathcal L_{d-1}} \mathbb G$. Now $\GL(\Phi \mathcal L_d)$ is a quotient of $\mathrm{Stab} _{\Phi \mathcal L_d} \mathbb G$, and  $\Gr (\Phi \mathcal L_d , d-1)$ is a quotient of $\GL(\Phi \mathcal L_d) $. The lemmas follows, for instance by \cite[6.7]{borel1991}.
\end{proof}

\begin{cor}\label{grass version of tan}
	Let $x_0 = (\mathcal L_{d-1} ,\mathcal L_d) \in S_{\Lambda} ^+ (k)$. Then we have an identification
	$$ \mathcal T_{x_0 } S_{\Lambda} ^+ \cong \Hom (\mathcal L_{d-1}, \Phi \mathcal L_d / \mathcal L_{d-1}).$$ Suppose in addition that $x_0  \in S_{\Lambda} ^+ (k)^{\bar g}$. Then $\bar g \in \mathbb G$ stabilizes $\mathcal L_d$, $\Phi \mathcal L_d$, and $\mathcal L_{d-1}$. We have an identification  
	$$ \mathcal T_{x_0 }( S_{\Lambda} ^+ )^{\bar g} \cong \Hom (\mathcal L_{d-1}, \Phi \mathcal L_d / \mathcal L_{d-1})^{\bar g}.$$
\end{cor}
\begin{proof}
	This follows from Proposition \ref{tan spaces} and Lemma \ref{realization of quotient}.
\end{proof}
}

 \ignore{
\begin{cor}\label{main result on tgt space}
Let $x_0 = (\mathcal L_{d-1} ,\mathcal L_d) \in S_{\Lambda} ^+ (k)^{\bar g}$. Let $\lambda, c$ be as in Definition \ref{defn of lambda c}. Then the tangent space $\mathcal T_{x_0} (S_{\Lambda} ^+) ^{\bar g}$ has dimension at most one over $k$. It has dimension one if and only if $c > 1$. 
\end{cor}
\begin{proof}
By Corollary \ref{grass version of tan}, we have 
	$$ \mathcal T_{x_0 }( S_{\Lambda} ^+ )^{\bar g} \cong \Hom (\mathcal L_{d-1}, \Phi \mathcal L_d / \mathcal L_{d-1})^{\bar g} \cong (\mathcal L_{d-1} ^*) ^{\lambda h^t},$$
	where $$h: = (\bar g |_{\mathcal L_{d-1}} )^{-1}\in \GL(\mathcal L_{d-1}). $$ Namely, $\mathcal T_{x_0 }( S_{\Lambda} ^+ )^{\bar g}$ is the eigenspace of eigenvalue $\lambda^{-1}$ of $h^t$ acting on $ \mathcal L_{d-1} ^*$. Since $\bar g|_{\Phi \mathcal L_d}$ has the property that to each eigenvalue there is at most one Jordan block, the same holds for $\bar g|_{\mathcal L_{d-1}}$, $h$, and $h^t$. Moreover $\lambda^{-1}$ is an eigenvalue of $h^t$ if and only if $\lambda$ is an eigenvalue of $\bar g|_{\mathcal L_{d-1}}$, if and only if $c>1$. It follows that the eigenspace of eigenvalue $\lambda^{-1}$ of $h^t$ has dimension at most one, and it has dimension one if and only if $c>1$.
\end{proof}
Next we study the local lengths of $(S_{\Lambda} ^+)^{\bar g}$.}

 \begin{defn}
 	If $A$ is a finite dimensional $k$-vector space, we write $\aff{A}$ for the affine space over $k$ defined by $A$. Thus for a $k$-algebra $R$ we have $\aff{A} (R) = A\otimes_k R$. 
 \end{defn}
 \ignore{\begin{defn}
 	Let $V_1\subset V_2 \subset V_3$ be $k$-vector spaces. Let $A$ be a subspace of $\Hom(V_1, V_3)$ and $B$ be a subspace of $\Hom (V_2, V_3)$. Write $(A\times B)^{\mathrm{comp}}$ for the subspace of $A\times B$ consisting of compatible elements, i.e. elements $(\phi,\psi) \in A\times B$ such that $\psi|_{V_1} = \phi$. Write $\aff{A} \times^{\mathrm{comp}} \aff{B}$ for $\aff{(A\times B)^{\mathrm{comp}} }$. 
 \end{defn}}
 \begin{defn}Let $\mathcal L_d,\mathcal M_d $ be Lagrangian subspaces of $\Omega$ such that $\Omega = \mathcal L_d \oplus \mathcal M_d$. we write $\Hom_{\anti} (\mathcal L_d, \mathcal M_d)$ for the space of anti-symmetric $k$-linear maps $\mathcal L_d \to \mathcal M_d$. Here we say $\phi: \mathcal L_d \to \mathcal M_d$ is anti-symmetric if the bilinear form $\LL_d \times \LL_d \to k, (x,y) \mapsto \lprod{x, \phi y}$ is anti-symmetric.
 	 \end{defn}
 	 
 	 \subsubsection{}\label{recall of Grass}
Recall that in general, if $A$ is a finite dimensional vector space over $k$ and $B$ is a subspace, then we can construct a Zariski open of the Grassmannian $\Gr (A,\dim B)$ as follows. Choose a subspace $C$ of $A$ such that $A = B \oplus C$. Then there is an open embedding $\iota_{B,C}: \aff{\Hom_k (B, C)} \to \Gr (A, \dim B)$ which we now describe. For any $k$-algebra $R$ and any $R$-point $\phi$ of $\aff{\Hom _k(B,C)}$, we view $\phi$ as an element of $\Hom _k(B, C) \otimes R = \Hom _R(B\otimes R, C\otimes R)$. Then $\iota_{B,C}$ maps $\phi$ to the $R$-point of $\Gr(A, \dim B)$ corresponding the following $R$-submodule of $A$: 
\begin{align}\label{subspace corrsp to phi}
\set{x+ \phi(x)| x \in B\otimes R}.
\end{align}
For details see for instance \cite[Lecture 6]{Joe}. In the following we will think of $\aff{\Hom_k(B,C)}$ as a Zariski open of $\Gr(A,\dim B)$, omitting $\iota_{B,C}$ from the notation.

 \begin{lem}\label{local structure of OGr}
 Let $\mathcal L_d,\mathcal M_d $ be complementary Lagrangian subspaces of $\Omega$ over $k$. Then $$ \OGr(d) \times _{\Gr(\Omega, d)} \aff{\Hom (\LL_d, \mathcal M_d)} = \aff{\Hom_{\anti} ( \mathcal L_d ,\mathcal M_d) }.$$ In particular, the $k$-point $\LL_d$ in $\OGr(d)$ has an open neighborhood of the form $\aff{\Hom_{\anti} ( \mathcal L_d ,\mathcal M_d) }$.
 \end{lem}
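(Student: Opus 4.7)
The plan is to unwind the explicit Zariski chart $\aff{\Hom(\LL_d, \mathcal M_d)}$ of $\Gr(\Omega, d)$ described in \S\ref{recall of Grass} with $A = \Omega$, $B = \LL_d$, $C = \mathcal M_d$, and then impose on an $R$-point $\phi$ the closed condition cutting out $\OGr(d) \subset \Gr(\Omega, d)$, namely total isotropy of the corresponding submodule.

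First I would recall that for a $k$-algebra $R$ and an $R$-point $\phi \in \Hom_R(\LL_d \otimes_k R, \mathcal M_d \otimes_k R)$, the associated $R$-point of $\Gr(\Omega, d)$ is the local direct summand
$$N_\phi := \{x + \phi(x) \mid x \in \LL_d \otimes_k R\} \subset \Omega \otimes_k R.$$
Then I would compute, for $x, y \in \LL_d \otimes_k R$,
$$\langle x + \phi(x),\, y + \phi(y) \rangle = \langle x, y\rangle + \langle x, \phi(y)\rangle + \langle \phi(x), y\rangle + \langle \phi(x), \phi(y)\rangle,$$
and observe that the first and last terms vanish because $\LL_d$ and $\mathcal M_d$ are Lagrangian (Lagrangianness is preserved under $-\otimes_k R$ since the bilinear form is $R$-bilinear after extension). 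Using the symmetry of $\langle \cdot, \cdot \rangle$, this reduces to
$$\langle x + \phi(x),\, y + \phi(y) \rangle = \langle x, \phi(y)\rangle + \langle y, \phi(x)\rangle.$$

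Next I would conclude that $N_\phi$ is totally isotropic, i.e.\ $\phi$ defines an $R$-point of $\OGr(d) \times_{\Gr(\Omega,d)} \aff{\Hom(\LL_d, \mathcal M_d)}$, if and only if the $R$-bilinear form $(x,y) \mapsto \langle x, \phi(y)\rangle$ on $\LL_d \otimes_k R$ is anti-symmetric, which is exactly the defining condition of $\aff{\Hom_{\anti}(\LL_d, \mathcal M_d)}$. Since this identification is functorial in $R$, the asserted equality of schemes follows by Yoneda. The ``in particular'' clause is then immediate: the point $\LL_d \in \OGr(d)(k)$ corresponds to the origin $\phi = 0$ in this chart.

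No serious obstacle is expected; the only thing to watch is the bookkeeping of signs and to justify interchanging the two middle terms via the symmetry of the quadratic form on $\Omega$ (which is why ``anti-symmetric'' appears as the condition cutting out $\OGr$ inside $\Gr$ rather than ``symmetric'').
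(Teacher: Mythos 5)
Your proposal is correct and follows essentially the same route as the paper: identify the chart of $\Gr(\Omega,d)$ via the graph submodule $N_\phi$ and observe that the cross terms $\langle x,\phi(y)\rangle+\langle y,\phi(x)\rangle$ are all that survive, yielding exactly the anti-symmetry condition. The paper's version only tests the diagonal $x=y$ (getting $\langle x,\phi(x)\rangle=0$ for all $x$), which is equivalent to your polarized condition since $p$ is odd, so the two arguments are the same computation in slightly different packaging.
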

 \begin{proof}
 	Let $R$ be a $k$-algebra and $\phi$ an $R$-point of $\aff{\Hom (\LL_d, \mathcal M_d)}$. Then the submodule (\ref{subspace corrsp to phi}) (for $B = \LL_d$) is Lagrangian if and only if for all $x\in B\otimes R$, $$\lprod{x+ \phi (x) , x + \phi(x)} =0. $$ But we have $\lprod{x , x} = \lprod{ \phi (x), \phi(x)} = 0 $ since $\LL_d\otimes R$ and $\mathcal M_d \otimes R$ are both Lagrangian. Hence (\ref{subspace corrsp to phi}) is Lagrangian if and only if $\lprod{x, \phi(x)} =0 $ for all $x\in \mathcal M_d \otimes R$.
 	  \end{proof}
\subsubsection{}\label{take x_0}
It follows from the assumptions we made on $\bar g \in \mathbb G(k)$ in \ref{notation} that its characteristic polynomial on $\Omega$ is equal to its minimal polynomial on $\Omega$ (cf. Remark \ref{rem:regular}). In general this property is equivalent to the property that in the Jordan normal form all the Jordan blocks have distinct eigenvalues. From now on we let $x_0  = (\mathcal L_{d-1} ,\mathcal L_d) \in S_{\Lambda} (k) $ be an element fixed by $\bar g$. Then $\Phi \mathcal L_d \subset \Omega$ is also stable under $\bar g$. If we identify $\Omega/ \Phi \mathcal L_d$ with $(\Phi \mathcal L_d)^*$ (the $k$-vector space dual) using the bilinear form on $\Omega$, the action of $\bar g$ on $\Omega/ \Phi \mathcal L_d$ is equal to the inverse transpose of $\bar g|_{\Phi \mathcal L_d}$. It follows that the minimal polynomial (resp. characteristic polynomial) of $\bar g$ on $\Omega$ is equal to the minimal polynomial (resp. characteristic polynomial) of $\bar g|_{\Phi \mathcal L_d}$ times its reciprocal. Hence $\bar g|_{\Phi \mathcal L_d}$ has equal minimal and characteristic polynomial, too.

\begin{defn}\label{defn of lambda c}
	Let $\lambda$ be the (nonzero) eigenvalue of $\bar g$ on the one-dimensional $\Phi \mathcal L_d/\mathcal L_{d-1}$. Let $c$ be the size of the unique Jordan block of eigenvalue $\lambda$ of $\bar g|_{\Phi \mathcal L_d}$.  
\end{defn}
 \subsubsection{}\label{defn of U and I}
Let $x_0 = (\mathcal L_{d-1} ,\mathcal L_d ) \in S_{\Lambda} (k)^{\bar g}$ as in \ref{take x_0}. Define 
$$Y: = \Gr(\Phi \LL_d, d-1 ) \times _k \OGr (d).$$
 Let $\mathcal I \subset Y$ be the sub-functor defined by the incidence relation, i.e. for a $k$-algebra $R$
  $$ \mathcal I (R) = \set{ (\LL_{d-1} ' , \LL_d') \in \Gr (\Phi \LL_d, d-1)  (R) \times \OGr (d) (R) ~|~  \LL_{d-1}' \subset \LL'_{d} }. $$
   The pair $(\LL_{d-1} ,\LL_{d} )$ defines a $k$-point in $\mathcal I$, which we again denote by $x_0$. It is well known that the incidence sub-functor of $\Gr(\Phi \LL_d, d-1) \times \Gr(\Omega, d)$ is represented by a closed subscheme, and it follows that $\mathcal I$ is a closed subscheme of $Y$.

Since $x_ 0 = (\LL_{d-1}, \LL_d) \in S_{\Lambda} (k)$ is fixed by $\bar g$, we have a natural action of $\bar g$ on $Y$, stabilizing $\mathcal I$ and fixing $x_0 \in \mathcal I$. Let $$\tilde{\mathcal R}: = \oo_{\mathcal I ,x_0},\quad \mathcal R: = \oo_{\mathcal I^{\bar g}, x_0},\quad \tilde {\mathcal S}: = \oo_{S_{\Lambda} , x_0},\quad\mathcal S: = \oo_{S_{\Lambda}^{\bar g} , x_0}$$ be the local rings at $x_0 $ of $\mathcal I, \mathcal I^{\bar g}, S_{\Lambda}, S_{\Lambda}^{\bar g}$ respectively. Let $$\tilde {\mathcal R}_p : = \tilde {\mathcal R} / \mathfrak m_{\tilde {\mathcal R}}^p,\quad \mathcal R_p : = \mathcal R/\mathfrak m_{\mathcal R}^p,\quad\tilde {\mathcal S}_p : = \tilde {\mathcal S}/\mathfrak m_{\tilde {\mathcal S}}^p,\quad \mathcal S_p: = \mathcal S/\mathfrak m_{\mathcal S}^p$$
be the above four local rings modulo the $p$-th powers of their respective maximal ideals.

The following lemma expresses the observation that $\mathcal I^{\bar g}$ may serve as a model for $S_{\Lambda}^{\bar g}$ locally around $x_0$. 

\begin{lem}\begin{enumerate}		
		\item There is a $k$-algebra isomorphism $ \tilde{\mathcal R} _p \cong \tilde {\mathcal S} _p$, equivariant for the $\bar g$-action on both sides. 
	\item There is a $k$-algebra isomorphism $ \mathcal R_p \cong \mathcal S_p$. 
	\end{enumerate}\label{loc mod} \end{lem}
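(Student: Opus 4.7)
The plan is to realize both $S_{\Lambda}$ and $\mathcal I$ as closed subfunctors of the common ambient $Y' := \OGr(d) \times_k \Gr(\Omega, d-1)$ near the base point $x_0 = (\LL_d, \LL_{d-1})$, and to argue that their defining conditions become indistinguishable upon restriction to artinian local $k$-algebras $R$ with residue field $k$ satisfying $\mathfrak m_R^p = 0$. The rings $\tilde{\mathcal R}_p$ and $\tilde{\mathcal S}_p$ are themselves of this form, and every local homomorphism from $\tilde{\mathcal R}$ (resp.\ $\tilde{\mathcal S}$) into such an $R$ factors automatically through $\tilde{\mathcal R}_p$ (resp.\ $\tilde{\mathcal S}_p$). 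So on this class of test rings they represent the functors of $R$-points of $\mathcal I$ (resp.\ $S_{\Lambda}$) lying over $x_0$, and Yoneda will deliver $\tilde{\mathcal R}_p \cong \tilde{\mathcal S}_p$ as soon as these two functors are matched naturally.

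The key observation is that for any such test ring $R$, the $p$-Frobenius $\Phi_R \colon R\to R$ sends $\mathfrak m_R$ into $\mathfrak m_R^p = 0$, and hence factors through the residue field $k$. Consequently, for any $R$-point $\LL'_d\in\OGr(d)(R)$ reducing to $\LL_d$, the Frobenius twist $\Phi\LL'_d$ coincides with the constant deformation $\Phi\LL_d\otimes_k R$. I will verify this explicitly in the chart $\aff{\Hom_{\anti}(\LL_d,\mathcal M_d)}$ of $\OGr(d)$ around $\LL_d$ provided by Lemma \ref{local structure of OGr}: if $\phi\in\Hom(\LL_d,\mathcal M_d)\otimes R$ represents $\LL'_d$ via \eqref{subspace corrsp to phi}, then the coefficients of $\phi$ lie in $\mathfrak m_R$, so their $p$-th powers, which describe $\Phi\LL'_d$ in the analogous chart around $\Phi\LL_d$, all vanish.

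Granted this, an $R$-point of $S_{\Lambda}$ lying over $x_0$ is a pair $(\LL'_{d-1},\LL'_d)$ with $\LL'_d$ a Lagrangian deformation of $\LL_d$, $\LL'_{d-1}$ a rank $d-1$ local direct summand of $\Omega\otimes_k R$ deforming $\LL_{d-1}$, and $\LL'_{d-1}\subset \LL'_d\cap\Phi\LL'_d$; by the previous paragraph the last condition is just $\LL'_{d-1}\subset \LL'_d$ together with $\LL'_{d-1}\subset \Phi\LL_d\otimes_k R$, and in view of Remark \ref{rem: direct summand} this is precisely the data of an $R$-point of $\mathcal I$ lying over $x_0$. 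The bijection is manifestly natural in $R$ and compatible with the $\bar g$-action, since $\bar g\in\mathbb G_0(\FF_p)$ commutes with $\Phi$ and preserves both subfunctors of $Y'$. By Yoneda this proves (1).

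Part (2) will then follow formally. For a scheme $X$ carrying a finite-order automorphism $\bar g$ fixing a point $x_0$, the local ring of the fixed-point subscheme at $x_0$ is the quotient of $\oo_{X,x_0}$ by the ideal generated by $\{\bar g^* f - f : f\in\oo_{X,x_0}\}$. Reducing this description modulo the $p$-th power of the maximal ideal identifies $\mathcal R_p$ with $\tilde{\mathcal R}_p/(\bar g^* f - f)$ and $\mathcal S_p$ with $\tilde{\mathcal S}_p/(\bar g^* f - f)$; the $\bar g$-equivariant isomorphism from (1) carries one such ideal onto the other and yields $\mathcal R_p \cong \mathcal S_p$. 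The only genuine input is the Frobenius-factorization observation in the second paragraph; the rest is functor-of-points bookkeeping, which is where the care is needed but which presents no conceptual obstacle.
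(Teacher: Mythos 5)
Your proposal is correct and is essentially the paper's own argument: the entire content in both cases is that the Frobenius on a local $k$-algebra whose maximal ideal has vanishing $p$-th power factors through the residue field, so that $\Phi\LL'_d$ is the constant deformation $\Phi\LL_d\otimes_k R$ and the moduli conditions defining $S_{\Lambda}$ and $\mathcal I$ coincide over such test rings; the paper phrases this by transporting the tautological pairs over $\tilde{\mathcal R}_p$ and $\tilde{\mathcal S}_p$ rather than by Yoneda on the category of test rings, but these are the same argument. Part (2) is handled identically in both, via the description of the fixed-point ideal as generated by $\bar g^* f - f$.
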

\begin{proof}
	We first show (1). Let $(\LL_{d-1}' , \LL_{d}')$ be the tautological pair over $\tilde{\mathcal S} _p$ for the moduli problem $S_{\Lambda}$, and let $(\LL_{d-1}'' , \LL_{d}'')$ be the tautological pair over $\tilde{\mathcal R} _p$ for the moduli problem $\mathcal I$. Note that $$\Phi \LL_d' = (\Phi \LL_d) \otimes \tilde {\mathcal S} _p$$ as submodules of $\Omega \otimes_k \tilde {\mathcal S} _p $ because $\Phi: \tilde {\mathcal S} _p \to \tilde {\mathcal S} _p$ factors through the reduction map $\tilde {\mathcal S} _p \to k$. It follows that $(\LL_{d-1}',\LL_d')$ defines a point in $\mathcal I (\tilde{\mathcal S}_p)$ lifting $x_0 \in \mathcal I(k)$. Similarly, $$\Phi \LL_d'' = (\Phi \LL_d) \otimes \tilde {\mathcal R} _p$$ as submodules of $\Omega \otimes_k \tilde {\mathcal R} _p $, and hence $(\LL_{d-1}'',\LL_d'')$ defines a point in $S_{\Lambda} (\tilde {\mathcal R}_p)$ lifting $x_0 \in \mathcal S_{\Lambda}(k)$. The point in $\mathcal I(\tilde{\mathcal S}_p)$ and the point in $S_{\Lambda} (\tilde {\mathcal R} _p) $ constructed above give rise to inverse $k$-algebra isomorphisms between $\tilde{\mathcal R} _p$ and $\tilde {\mathcal S} _p$, which are obviously $\bar g$-equivariant. 
	
	(2) follows from (1), since $\mathcal R_p$ (resp. $\mathcal S_p$) is the quotient ring of $\tilde{\mathcal R}_p$ (resp. $\tilde{\mathcal S} _p$) modulo the ideal generated by elements of the form $r - \bar g \cdot r$ with $r\in \tilde{\mathcal R}_p$ (resp. $r\in \tilde{\mathcal S}_p$).   
 \end{proof}
\ignore{\begin{rem}
	The above proof also shows that $\oo_{S_{\Lambda} ^+, x_0}/\mathfrak m_{x_0}^l$ is isomorphic to $\oo_{\mathcal I , x_0} /\mathfrak m _{x_0} ^l$ for $1\leq l \leq p$. In particular, $S_{\Lambda} ^+$ and $\mathcal I$ have isomorphic tangent spaces at $x_0$. By Lemma \ref{eqn for I}, we see that the tangent space of $\mathcal I$ at $x_0$ is isomorphic to $(\Hom (\LL_{d-1}, \mathrm{span}_k (w_d)) \times \Hom_{\anti} (\LL_d, ~\mathrm{span}_k (w_1,\cdots, w_d)))^{\mathrm{comp}}$. The last vector space is isomorphic to $\Hom(\LL_{d-1}, \mathrm{span}_k (w_d)) $ via projection to the first factor. From this we recover Corollary \ref{grass version of tan}. 
\end{rem}}

\subsection{Study of $\mathcal{I}^{\bar g}$}\label{choice of basis}
Next we study $\mathcal{I}^{\bar g}$ by choosing certain explicit coordinates on $\mathcal{I}$. Choose a $k$-basis $v_1,\cdots, v_d, w_1,\cdots , w_d$ of $\Omega$, such that 
 \begin{itemize}
 	\item $\LL_{d-1}$ is spanned by $v_1,\cdots, v_{d-1}$.
 	\item $\LL_d$ is spanned by $v_1,\cdots, v_d$.
 	\item $\Phi \LL_d$ is spanned by $v_1,\cdots, v_{d-1} , w_d$.
 	\item $\lprod{v_i, v_j} = \lprod{w_i , w_j} = 0, \lprod{v_i,w_j} = \delta_{ij}.$
 \end{itemize}
 We will denote $$\hat v_i : = \begin{cases}
 v_i,~ 1\leq i\leq d-1 \\ 
 w_d, ~ i=d
 \end{cases}$$
 Also denote 
 $$\mathcal M_d : = \mathrm{span}_k (w_1,\cdots, w_d). $$

 For $1\leq i \leq d-1$, define an element $\phi_i \in \Hom (\LL_{d-1} , ~\mathrm{span}_k (w_d) )$ by 
 \begin{align}\label{phi_i}
 \phi_i (v_j) = \delta_{ij}  w_d.
 \end{align}
 Then $\phi_1,\cdots, \phi_{d-1}$ is a basis of $\Hom (\LL_{d-1} , ~\mathrm{span}_k (w_d) )$.

By \S \ref{recall of Grass} and Lemma \ref{local structure of OGr}, there is a Zariski open neighborhood of $x_0$ in $Y$, of the form $$\mathcal U: = \aff{\Hom (\LL_{d-1} ,  ~\mathrm{span}_k (w_d) )} \times \aff{\Hom_{\anti} (\LL_d,\mathcal M_d)} .$$
 
\begin{lem}
	\begin{enumerate}
		\item  Let $R$ be a $k$-algebra. Let $y \in \mathcal U(R)$, corresponding to $$(\phi,\psi) \in \Hom (\LL_{d-1} ,       \mathrm{span}_k(w_d))\otimes R \oplus \Hom_{\anti} (\LL_d, \mathcal M_d) \otimes R.$$ We view $\phi \in \Hom_R(\LL_{d-1}\otimes R, \mathrm{span}_R (w_d))$ and $\psi \in \Hom _R (\LL_d \otimes R, \mathcal M_d \otimes R)$. Then $y$ is in $\mathcal I$ if and only if $\psi|_{\LL_{d-1} \otimes R} = \phi$. 
		\item The projection to the first factor $\mathcal U \to \aff{\Hom (\LL_{d-1} ,\mathrm{span}_k (w_d))}$ restricts to an isomorphism 
		$$ \mathcal U \cap \mathcal I \isom \aff{\Hom (\LL_{d-1} ,\mathrm{span}_k (w_d))}. $$
	\end{enumerate}
\label{eqn for I}
\end{lem}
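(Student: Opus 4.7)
For part (1), the plan is to unwind the description of the subspaces corresponding to $(\phi,\psi)$ under the two open embeddings and then test the incidence relation. By \S\ref{recall of Grass} applied with $A = \Phi\LL_d$, $B = \LL_{d-1}$, $C = \mathrm{span}_k(w_d)$, the element $\phi$ classifies the submodule $\LL_{d-1}' = \{x + \phi(x) : x \in \LL_{d-1}\otimes R\} \subset \Phi\LL_d \otimes R$, and by Lemma~\ref{local structure of OGr} applied with $A = \Omega$, $B = \LL_d$, $C = \mathcal M_d$, the element $\psi$ classifies $\LL_d' = \{x + \psi(x) : x \in \LL_d \otimes R\} \subset \Omega \otimes R$. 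Since $\LL_{d-1} \subset \LL_d$, $\phi(x) \in \mathrm{span}_k(w_d) \subset \mathcal M_d$, the inclusion $\LL_{d-1}' \subset \LL_d'$ reads: for every $x \in \LL_{d-1} \otimes R$ there exists $z \in \LL_d \otimes R$ with $x + \phi(x) = z + \psi(z)$. Projecting onto the two summands of $\Omega \otimes R = (\LL_d \otimes R) \oplus (\mathcal M_d \otimes R)$ forces $z = x$ and $\psi(x) = \phi(x)$. Conversely, if $\psi|_{\LL_{d-1}\otimes R} = \phi$ then $x + \phi(x) = x + \psi(x) \in \LL_d'$ for each such $x$, proving the equivalence.

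For part (2), part (1) identifies $\mathcal U \cap \mathcal I$ with the closed subfunctor of $\mathcal U$ defined by the linear equation $\psi|_{\LL_{d-1}} = \phi$. I will construct a morphism $s : \aff{\Hom(\LL_{d-1}, \mathrm{span}_k(w_d))} \to \mathcal U \cap \mathcal I$ inverse to the projection $\pi_1$. Given $\phi$, write $\phi(v_i) = c_i w_d$ with $c_i \in R$ for $1 \le i \le d-1$, and define $\psi \in \Hom(\LL_d, \mathcal M_d) \otimes R$ on the basis $v_1,\ldots,v_d$ by
\[
\psi(v_i) = c_i w_d \quad (i < d), \qquad \psi(v_d) = -\sum_{i=1}^{d-1} c_i w_i .
\]
Using $\lprod{v_i,w_j}=\delta_{ij}$, the matrix of $\psi$ in these bases is anti-symmetric with $m_{ii}=0$ (note this uses $p \neq 2$), so $\psi \in \Hom_{\anti}(\LL_d, \mathcal M_d)\otimes R$ and by construction $\psi|_{\LL_{d-1}\otimes R} = \phi$; this defines $s$. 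For uniqueness, suppose $\psi'$ is any anti-symmetric extension of $\phi$; writing $\psi'(v_j) = \sum_i m_{ij} w_i$, the equation $\psi'|_{\LL_{d-1}}=\phi$ forces $m_{ij} = 0$ for $i, j < d$ and $m_{di} = c_i$ for $i < d$, and then $m_{id} = -c_i$ and $m_{dd} = 0$ by anti-symmetry. Hence $\psi' = \psi$, so $\pi_1 \circ s = \id$ and $s \circ \pi_1 = \id$, completing the proof.

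The only place where something nontrivial could go wrong is the existence-and-uniqueness of the anti-symmetric extension in the second paragraph, but both are immediate from the isotropy of $\LL_{d-1}$ (which makes the constraint consistent) together with the nondegeneracy of $\lprod{\cdot,\cdot}$ between $\LL_d$ and $\mathcal M_d$ (which makes the matrix entries uniquely recoverable), so no serious obstacle arises.
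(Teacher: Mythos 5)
Your proof is correct and follows essentially the same route as the paper's: part (1) by unwinding the Grassmannian coordinates and using the direct-sum decomposition of $\Omega\otimes R$ to force $z=x$ and $\psi(x)=\phi(x)$, and part (2) by exhibiting the unique anti-symmetric extension of $\phi$ (your explicit formula for $\psi(v_d)$ is exactly the one the paper characterizes via the pairing relations). No gaps.
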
 
\begin{proof}
	\textbf{(1)} We know that $y$ is in $\mathcal I$ if and only if for all $v\in \LL_{d-1}\otimes R$, there exists $v'\in \LL_d\otimes R$, such that $$v+ \phi(v)  = v'+ \psi(v') $$ as elements of $\Omega\otimes R$. Decompose $v' = v'_1 + v'_2$ with $v'_1 \in \LL_{d-1} \otimes R$ and $v'_2 \in \mathrm{span}_R (v_d)$. Then the above equation reads 
	$$ v-v'_1  = v'_2 + (\psi(v') - \phi(v)).$$ Since $v-v'_1 \in \LL_{d-1} \otimes R,~ v_2' \in \mathrm{span}_R (v_d), ~\psi(v')-\phi(v) \in \mathcal M_d \otimes R $, the above equation holds if and only if $v= v_1' , ~ v_2' = 0, ~\phi(v) = \psi(v)$. 
	Hence $y\in \mathcal I$ if and only if for all $v \in \LL_{d-1}\otimes R$ we have $\psi(v) = \phi(v)$. This proves (1). 
	
	\textbf{(2)} By (1), we know that $\mathcal U \cap \mathcal I$ is the affine subspace of $\mathcal U$ associated to the linear subspace of $$\Hom (\LL_{d-1} , \mathrm{span}_k (w_d)) \times \Hom_{\anti} (\LL_d, \mathcal M_d)$$ consisting of pairs $(\phi,\psi)$ such that $\psi|_{\LL_{d-1}} = \phi$. Call this subspace $A$. We only need to show that projection to the first factor induces an isomorphism $A \isom \Hom (\LL_{d-1} ,\mathrm{span}_k  (w_d))$.
	
	 Note that if $\psi \in \Hom_{\anti} (\LL_d, \mathcal M_d)$, then $\psi$ is determined by $\psi|_{\LL_{d-1}}$. This is because for each $1\leq i \leq d$, we have \begin{align}\label{psi v_d}
\lprod{\psi v_d, v_i} = \begin{cases}
- \lprod{v_d, \psi v_i} , ~  i \leq d-1\\
0, ~ i =d
\end{cases}
	\end{align} which means that $\psi(v_d)$ is determined by $\psi|_{\LL_{d-1}}$. Conversely, given $\phi \in \Hom (\LL_{d-1} , \mathrm{span}_k (w_d))$ we can construct $\psi \in \Hom_{\anti} (\LL_d, \mathcal M_d) $ such that $\psi|_{\LL_{d-1}} = \phi$ as follows. For $1\leq j \leq d-1$, define $\psi  (v_j)$ to be $\phi(v_j)$. Define $\psi(v_d)$ to be the unique element of $\mathcal M_d$ satisfying (\ref{psi v_d}). In this way we have defined a linear map $\psi: \LL_{d} \to \mathcal M_d$ such that $\psi|_{\LL_{d-1}} = \phi$. We now check that $\psi $ is anti-symmetric. We need to check that for all $1\leq i \leq j \leq d$, we have $\lprod{\psi v_j , v_i} = - \lprod{\psi v_i, v _j}$. If $j = d$, this is true by (\ref{psi v_d}). Suppose $j<d$. Then $\lprod{\psi v_j , v_i} = \lprod{\psi v_i, v_j} = 0$ because $\psi v_j , \psi v_i \in \mathrm{span}_k (w_d)$ and $\lprod{w_d, \LL_{d-1}} = 0$. Thus $\psi$ is indeed antisymmetric. It follows that $A \isom \Hom (\LL_{d-1}, \mathrm{span}_k (w_d))$.
\end{proof}
  \ignore{
 \subsubsection{}\label{discussion of fixed points in Y}
 Now assume that $x_0 \in S_{\Lambda}^{\bar g}(k)  .$ Then $\bar g$ stabilizes $\LL_d, \Phi \LL_d, \LL_{d-1}$. The natural action of $\bar g$ on $Y$ does not stabilize $\mathcal U$ in general, but we have a natural identification 
 \begin{align}\label{gU}
 \bar g \cdot  \mathcal U \cong \aff{\Hom (\LL_{d-1}, \mathrm{span}_k (\bar g w_d))} \times \aff{\Hom_{\anti} (\LL_d, \bar g \mathcal M_d)} \subset Y. 
 \end{align}
 Let $R$ be a $k$-algebra. If $y \in \mathcal U(R)$ is given by $(\phi,\psi)$, where $\phi \in \Hom (\LL_{d-1}, \mathrm{span}_k ( w_d))\otimes R$ and $\psi \in \Hom_{\anti} (\LL_d, \mathcal M_d ) \otimes R$, then $\bar g (y ) \in \bar g \cdot \mathcal U$ corresponds under (\ref{gU}) to $$(\bar g\circ \phi \circ (\bar g|_{\LL_{d-1}})^{-1}, \bar g \circ \psi \circ (\bar g|_{\LL_d})^{-1} ). $$ Denote the last pair by $(\leftidx{^{\bar g}} \phi, \leftidx {^{\bar g}} \psi).$ Then $y = \bar g y$ if and only if the following two conditions hold. \begin{align}
 \label{cond 1} \forall u \in \LL_{d-1}, \exists u' \in \LL_{d-1} \otimes R,  ~ u + \phi (u)  = u' + \leftidx{^{\bar g}}\phi (u') \\
  \label{cond 2} \forall v \in \LL_{d}, \exists v' \in \LL_{d} \otimes R, ~ v+ \psi (v) = v'+ \leftidx{^{\bar g}} \psi(v')
\end{align}}
From now on we assume $x_0 = (\LL_{d-1}, \LL_d)\in S_{\Lambda} ^{\bar g} (k)$.
\begin{defn}
	Write the matrix over $k$ of $\bar g$ acting on $\Phi \LL_d$ under the basis $\hat v_1,\cdots, \hat v_{d}$ (cf. \S \ref{choice of basis}) as 
	$$ \begin{pmatrix}
	H_1 & H_2 \\ H_3 & H_4
	\end{pmatrix},$$ where $H_1$ is of size $(d-1) \times (d-1)$, $H_2$ is of size $(d-1) \times 1$, $H_3$ is of size $1\times (d-1)$, and $H_4\in k$. 
\end{defn}
\begin{rem}\label{H_3=0}
Since $\bar g$ stabilizes $\LL_{d-1}$, we have $H_3=0$
\end{rem}

\begin{prop}\label{prop: fixed pts in Y}
	Let $R$ be a $k$-algebra and let $y =(\phi,\psi) \in \mathcal U(R)$. Represent $\phi$ as an $R$-linear combination $\phi = \sum_{i=1} ^{d-1} r_i \phi_i$ of the $\phi_i$'s (cf. (\ref{phi_i})), where $r_i \in R$. Write $\vec{r}$ for the row vector $(r_1,\cdots, r_{d-1}).$  
	\begin{enumerate}
		\item View $\phi$ as an element of $ \Gr (\Phi \LL_d, d-1)(R).$ It is fixed by $\bar g|_{\Phi \LL_d}$ if and only if 
		\begin{align}\label{key equation}
		\vec r ( H_1 + H_2 \vec r) = H_4 \vec r.
		\end{align}
		\ignore{\begin{align}\label{cond 1 easy}
		r_i = h_{dd} r_i \sum_{l=1}^{d-1} h^{ld} r_l +   h_{dd} \sum_{ l =1}^{d-1} h^{li} r_l ,~  1\leq i \leq d-1.
		\end{align}} 	
		\item Assume that $y \in \mathcal I(R)$ and that $\phi \in \Gr (\Phi \LL_d, d-1)$ is fixed by $\bar g|_{\Phi \LL_d}$. Then $\psi$, viewed as an element of $ \OGr(d)(R)$, is fixed by $\bar g$. In other words, $y$ is fixed by $\bar g$ in this case.
	\end{enumerate}
\end{prop}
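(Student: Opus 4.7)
My plan is to treat the two parts in turn, exploiting the explicit coordinates set up in \S\ref{choice of basis}.

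For part (1), via the open embedding $\iota_{\LL_{d-1}, \mathrm{span}_k(w_d)}$ from \S\ref{recall of Grass}, the $R$-point $\phi = \sum_i r_i \phi_i$ corresponds to the rank-$(d-1)$ $R$-module direct summand
$$M_\phi := \mathrm{span}_R\{v_j + r_j w_d : 1 \le j \le d-1\} \subset \Phi\LL_d \otimes_k R.$$
Using the matrix $\left(\begin{smallmatrix} H_1 & H_2 \\ 0 & H_4\end{smallmatrix}\right)$ of $\bar g|_{\Phi\LL_d}$ in the basis $\hat v_1,\ldots,\hat v_d$ (with the zero block courtesy of Remark~\ref{H_3=0}), I will expand $\bar g(v_j + r_j w_d)$ and note that a vector $\sum_i c_i v_i + c_d w_d \in \Phi\LL_d \otimes_k R$ lies in $M_\phi$ if and only if $c_d = \sum_i c_i r_i$. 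Assembling the resulting conditions for $j = 1, \ldots, d-1$ yields the matrix equation $\vec r (H_1 + H_2 \vec r) = H_4 \vec r$.

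For part (2), the cleanest route is to pass to the rank-$2$ quadratic space $M_\phi^\perp/M_\phi$ over $R$, where $M_\phi^\perp$ is the orthogonal complement of $M_\phi$ in $\Omega \otimes_k R$. I will first verify that this quotient is a hyperbolic plane by exhibiting a hyperbolic basis: the class $\bar w_d$ (the image of $\Phi\LL_d/M_\phi$) is isotropic, and direct computation using the $\mathcal I$-condition $\psi|_{\LL_{d-1}} = \phi$ together with the anti-symmetry of $\psi$ (cf.\ the proof of Lemma~\ref{eqn for I}) shows that $\psi(v_d) = -\sum_i r_i w_i$, so $M_\psi/M_\phi = R \bar u_d$ with $u_d := v_d - \sum_i r_i w_i$, and $\langle w_d, u_d\rangle = 1$ gives the hyperbolic pairing. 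The crux is then to show that $\bar g$ acts on $M_\phi^\perp/M_\phi$ as an element of $SO$. Along the filtration $0 \subset M_\phi \subset M_\phi^\perp \subset \Omega \otimes_k R$, the determinant of $\bar g$ factors; and the non-degenerate pairing on $\Omega$ produces a $\bar g$-equivariant isomorphism $(\Omega \otimes_k R)/M_\phi^\perp \cong M_\phi^*$ under which $\bar g$ acts as the inverse transpose of its action on $M_\phi$ (using $\langle \bar g x, m\rangle = \langle x, \bar g^{-1} m\rangle$). The outer two contributions cancel, leaving $\det(\bar g|_{M_\phi^\perp/M_\phi}) = \det_{\Omega \otimes_k R}(\bar g) = 1$. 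Since $p$ is odd, the group $SO$ of a hyperbolic plane is the split torus $\mathbb G_m$ acting diagonally in any hyperbolic basis; hence $\bar g$ preserves both isotropic lines, in particular $M_\psi/M_\phi$. Lifting back, $\bar g(M_\psi) \subset M_\psi + M_\phi = M_\psi$, with equality by invertibility of $\bar g$.

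The step I expect to be the main obstacle is the determinant bookkeeping in (2) and the justification of the $\bar g$-equivariant duality $(\Omega \otimes_k R)/M_\phi^\perp \cong M_\phi^*$ over a possibly non-local $k$-algebra $R$. The direct-summand properties needed to make sense of this (namely that $M_\phi$ and $M_\phi^\perp$ are local direct summands of $\Omega \otimes_k R$) are automatic from the fact that $M_\phi$ is a local direct summand of $\Phi\LL_d \otimes_k R$ and $\Phi\LL_d$ is a local direct summand of $\Omega$ (cf.\ Remark~\ref{rem: direct summand}). Once these are in place, the structure $SO(H) \cong \mathbb G_m$ of the split torus does the rest.
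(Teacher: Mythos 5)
Your proof is correct. Part (1) is essentially the paper's own computation in different clothing: the paper realizes $\phi$ as the column space of $\left(\begin{smallmatrix} I_{d-1} & 0\\ \vec r & 0\end{smallmatrix}\right)$ and compares column spaces after applying $\left(\begin{smallmatrix} H_1&H_2\\0&H_4\end{smallmatrix}\right)$, which amounts to the same membership criterion $c_d=\sum_i c_i r_i$ that you use for the graph submodule; both reductions of ``fixed'' to ``contained'' rest on the invertibility of $\bar g$. Part (2), however, takes a genuinely different route. The paper argues globally: it maps $\mathcal I$ to the orthogonal incidence variety $\OGr(d-1,d)$, observes that $\mathcal U\cap\mathcal I$ and $\bar g(\mathcal U\cap\mathcal I)$ are connected and share the point $x_0$, so $y$ and $\bar g y$ land in the same connected component, and then invokes the fact (from \cite[\S 3.2]{HPGU22}) that projection to the first factor is an isomorphism on each component, whence fixing $\phi$ forces fixing $\psi$. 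Your argument is local and linear-algebraic: you exhibit $M_\phi^{\perp}/M_\phi$ as a free hyperbolic plane with hyperbolic basis $(\bar w_d,\bar u_d)$, compute $\det(\bar g|_{M_\phi^{\perp}/M_\phi})=1$ via the $\bar g$-equivariant duality $(\Omega\otimes_k R)/M_\phi^{\perp}\cong M_\phi^{*}$, and use that $\SO$ of a hyperbolic plane over any $k$-algebra (with $p$ odd) is the diagonal torus, hence preserves each of the two isotropic lines, in particular $M_\psi/M_\phi$. Your route avoids importing the component structure of $\OGr(d-1,d)$ and makes the dichotomy between the two Lagrangians containing $M_\phi$ completely explicit (they are the two isotropic lines of the hyperbolic plane), at the cost of somewhat more bookkeeping; the paper's route is shorter given the facts it cites. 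The one step you should spell out in both parts is the passage from containment to equality (e.g.\ $\bar g M_\psi\subseteq M_\psi$ gives equality because $\bar g^{-1}$ also lies in the diagonal torus, or because both sides are local direct summands of $\Omega\otimes_k R$ of the same rank), but this is routine.
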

\begin{proof} 
	\textbf{(1)}
	First we identify $(\Phi \LL_d)\otimes R$ with $R^{d-1}$ using the basis $\hat v_1,\cdots, \hat v_d$. 
	As a point of $\Gr(\Phi \LL_d, d-1)$, $\phi$ corresponds to the following submodule of $(\Phi \LL_d)\otimes R$: the image, i.e. column space, of the $R$-matrix 
	$$ \begin{pmatrix}
	I_{d-1} & 0 \\  \vec r & 0 
	\end{pmatrix}.$$
Hence $\phi \in \Gr (\Phi \LL_d, d-1)$ is fixed by $\bar g|_{\Phi \LL_d}$ if and only if the following two $R$-matrices have the same column space:
$$ A_1 : = \begin{pmatrix}
I_{d-1} & 0 \\  \vec r & 0 
\end{pmatrix} \mbox{ and } ~A_2:= \begin{pmatrix}
H_1 & H_2 \\ H_3 & H_4
\end{pmatrix}\begin{pmatrix}
I_{d-1} & 0 \\  \vec r & 0 
\end{pmatrix}. $$
Note that since $\begin{pmatrix}
H_1 & H_2 \\ H_3 & H_4
\end{pmatrix}$ is invertible, $A_1$ and $A_2$ have the same column space if and only if the column space of $A_2$ is contained in that of $A_1$.
Since $H_3 =0$ (cf. Remark \ref{H_3=0}), we have
$$A_2=  \begin{pmatrix}
H_1 + H_2 \vec r & 0 \\ H_4 \vec r & 0
\end{pmatrix}. $$
But we easily see that the column space of $\begin{pmatrix}
H_1 + H_2 \vec r & 0 \\ H_4 \vec r & 0
\end{pmatrix}$ is contained in that of $ \begin{pmatrix}
I_{d-1} & 0 \\  \vec r & 0 
\end{pmatrix}$ if and only if (\ref{key equation}) holds. 

\ignore{	\textbf{(1)}
Condition (\ref{cond 1}) is equivalent to the condition that for each $1\leq i \leq d-1$, there exist $a_1,\cdots, a_{d-1} \in R$ such that
\begin{align}\label{cond 1'}
v_i + \phi(v_i) = \sum_{t=1} ^{d-1} a_t ( v_t + \leftidx{^{\bar g}} \phi ( v_t)).
\end{align} 
We have $\phi(v_i) = r_i  w_d$. 
We compute, for $1\leq t \leq d-1$, 
\begin{align}\label{g phi v_k}
\leftidx{^{\bar g}} \phi ( v_t) = \bar g \phi \bar g^{-1} v_t = \sum_{ l = 1}^{d-1} \bar g \phi h^{lt} v_l = \sum _{l=1}^{d-1} h^{lt} \bar g r_l w_d = \sum_{l=1}^{d-1} r_l h^{lt} \sum _{m=1}^d h_{md} \hat v_m .
\end{align} 

Hence the RHS of (\ref{cond 1'}) is equal to 
$$  \left[ \sum_{t=1} ^{d-1} a_t \sum_{ l =1}^{d-1} r_l h^{lt} h_{dd} \right] w_d + \sum_{j=1}^{d-1}  \left[a_j +  \sum_{t=1} ^{d-1} a_t \sum_{ l =1}^{d-1} r_l h^{lt} h_{jd}\right] v_j .$$

Thus (\ref{cond 1'}) is equivalent to 
$$\begin{cases}
\delta_{ij} = a_j +  \sum_{t=1} ^{d-1} a_t \sum_{ l =1}^{d-1} r_l h^{lt} h_{jd} , ~ 1\leq j \leq d-1 \\
r_i = \sum_{t=1} ^{d-1} a_t \sum_{ l =1}^{d-1} r_l h^{lt} h_{dd}. 
\end{cases}$$
Note that $h_{dd} \in k^{\times}$, so we may substitute $h_{dd}^{-1}$ times the second equation into the first, and get \begin{align}\label{value of a_j}
a_j = \delta_{ij} - h_{dd}^{-1} r_i h_{jd}. 
\end{align}
Substituting these values of $a_j$ back to the second equation we get the condition
 \begin{align}\label{cond 1''}
 r_i = \sum_{t=1} ^{d-1} (h_{dd} \delta_{it} - r_i h_{td}) \sum_{ l =1}^{d-1} r_l h^{lt},~  1\leq i \leq d-1,
 \end{align} 	
 Conversely, if (\ref{cond 1''}) holds, the values (\ref{value of a_j}) of $a_j$ satisfy (\ref{cond 1'}).
 	
 Now, using $\sum_{t=1} ^{d-1} h_{td} h^{lt} = \delta _{ld} - h_{dd} h^{ld} =- h_{dd} h^{ld}$ for $1\leq l \leq d-1$ to simplify the RHS of (\ref{cond 1''}), we see that (\ref{cond 1''}) is equivalent to (\ref{cond 1 easy}). }

\textbf{(2)}
 Let $\OGr(d-1,d)$ be the incidence subscheme of $\OGr(d-1) \times \OGr(d)$. Consider the natural morphism $f: \mathcal I \to \OGr(d-1,d)$, $(\LL_{d-1}', \LL_d') \mapsto (\LL_{d-1}', \LL_d').$ Note that $\mathcal U \cap \mathcal I$ is connected because it is a linear subspaces of the affine spaces $\mathcal U$ (cf. Lemma \ref{eqn for I}). Thus $(\bar g \cdot \mathcal U) \cap \mathcal I = \bar g (\mathcal U \cap \mathcal I) $ is also connected. Since $\mathcal U \cap \mathcal I$ and $(\bar g \cdot \mathcal U) \cap \mathcal I$ share a common $k$-point, namely $x_0$, we see that that $f(\mathcal U\cap \mathcal I)$ and $f((\bar g \cdot \mathcal U)\cap \mathcal I )$ are in one connected component of $\OGr(d-1, d)$. We have $y \in \mathcal U \cap \mathcal I$ and $\bar g y \in (\bar g \cdot \mathcal U)\cap \mathcal I $. In particular $f(y)$ and $ f(\bar g y) $ are $R$-points of the aforementioned connected component of $\OGr(d-1, d)$. Recall from \cite[\S 3.2]{HPGU22} that $\OGr(d-1, d)$ has two connected components, and each is isomorphic to $\OGr(d-1)$ via the projection to the first factor. Our assumptions imply that $f(y), f(\bar g y)$ have the same image in $\OGr(d-1)$. It follows that $f(y) = f(\bar g y)$. But by definition $f$ is injective on $R$-points, so $y = \bar g y$.

\ignore{We first claim that under the assumptions (\ref{cond 2}) is automatic for $v\in \LL_{d-1}$. Indeed, for $v \in \LL_{d-1}$, by (\ref{cond 1}) we can find $v' \in \LL_{d-1} \otimes R$ such that $v+ \phi (v) = v' + \leftidx {^{\bar g}} \phi (v')$. By Lemma \ref{eqn for I}, we have $\psi|_{\LL_{d-1} \otimes R} = \phi$, and therefore $$ v + \psi (v) = v+ \phi(v) = v' + \bar g \phi \bar g^{-1} v' \xlongequal{\bar g^{-1} v' \in \LL_{d-1} \otimes R} v' + \bar g \psi \bar g^{-1} v' = v' + \leftidx{^{\bar g}} \psi  (v'), $$ thus (\ref{cond 2}). It remains to study the condition \ref{cond 2} for $v = v_d$. 
By Lemma \ref{eqn for I}, we have 
$$ \psi v_j = r_j w_d, ~ 1\leq j \leq d-1$$ 
$$\psi v_d = -\sum_{ j=1} ^{d-1} r_j w_j. $$ Now condition (\ref{cond 2}) for $v = v_d$ is equivalent to the existence of $a_1,\cdots, a_d \in R$ such that 
\begin{align}\label{cond 2'}
v_d - \sum_{j=1} ^{d-1} r_j w_j = \sum_{k=1} ^d a_k ( v_k + \leftidx{^{\bar g}} \psi (v_k)).   
\end{align}
Denote the left (resp. right) hand side of (\ref{cond 2'}) by $\mathscr L$ (resp. $\mathscr R$). 
For $1\leq j \leq d-1$, we compute 
$$\lprod{\mathscr R, v_j} = \sum_{k=1}^{d} a_k \lprod{\bar g \psi \bar g^{-1} v_k , v_j}=  \sum_{k=1}^{d} a_k \lprod{ \psi \bar g^{-1} v_k , \bar g^{-1} v_j} = \sum_{k=1}^{d} a_k \lprod{ \psi \sum_{l=1}^{d} g^{lk} v_l  , \sum_{m=1}^{d-1} g^{mj} v_m} $$
$$ =\sum_{k=1}^{d} a_k \lprod{   g^{dk}  \psi v_d , \sum_{m=1}^{d-1} g^{mj} v_m} = -\sum_{k=1}^{d} a_k \lprod{   g^{dk}  \sum _{l=1}^{d-1} r_l w_l , \sum_{m=1}^{d-1} g^{mj} v_m} =-\sum_{k=1}^{d} a_k   g^{dk}    \sum _{l=1}^{d-1} r_l g^{lj}. $$
Also $$\lprod{\mathscr L, v_j} = -r_j. $$
Using (\ref{cond 1 easy}), we have 
$$\lprod{\mathscr L - \mathscr R, v_j} = \left[\sum_{k=1}^{d} a_k   g^{dk}    \sum _{l=1}^{d-1} r_l g^{lj} \right]- r_j = \left[\sum_{k=1}^{d} a_k   g^{dk}    h_{dd}^{-1} (r_j + r_j r_d) \right]- r_j  .$$
We compute 
$$ \lprod{\mathscr R, v_d} =\sum_{k=1}^{d} a_k \lprod{ \psi \sum_{l=1}^{d} g^{lk} v_l  , \sum_{m=1}^{d} g^{md} v_m} = \left[\sum_{k=1}^{d} a_k \lprod{ \psi \sum_{l=1}^{d} g^{lk} v_l  , \sum_{m=1}^{d-1} g^{md} v_m}\right] + \left[\sum_{k=1}^{d} a_k \lprod{ \psi \sum_{l=1}^{d} g^{lk} v_l  ,  g^{dd} v_d}\right]$$
$$=\left[-\sum_{k=1}^{d} a_k   g^{dk}    \sum _{l=1}^{d-1} r_l g^{ld}\right] + \left[\sum_{k=1}^{d} a_k \lprod{ \sum_{l=1}^{d-1} g^{lk}  r_l w_d  ,  g^{dd} v_d}\right] = \left[-\sum_{k=1}^{d} a_k   g^{dk}    \sum _{l=1}^{d-1} r_l g^{ld}\right] + \left[\sum_{k=1}^{d} a_k g^{dd} \sum_{l=1}^{d-1} r_l g^{lk}     \right] .$$
Also 
$$\lprod{\mathscr L, v_d} = 0$$
Using (\ref{cond 1 easy}), we have 
$$\lprod{\mathscr R -\mathscr L, v_d} = \lprod{\mathscr R, v_d} =  \left[-\sum_{k=1}^{d-1} a_k   g^{dk}    \sum _{l=1}^{d-1} r_l g^{ld}\right] - \left[ a_d   g^{dd}    \sum _{l=1}^{d-1} r_l g^{ld}\right]+ \left[\sum_{k=1}^{d-1} a_k g^{dd} \sum_{l=1}^{d-1} r_l g^{lk}     \right]  + \left[a_d g^{dd} \sum_{l=1}^{d-1} r_l g^{ld}     \right]$$ 
$$ = \left[-\sum_{k=1}^{d-1} a_k   g^{dk}    \sum _{l=1}^{d-1} r_l g^{ld}\right]+ \left[\sum_{k=1}^{d-1} a_k g^{dd} \sum_{l=1}^{d-1} r_l g^{lk}     \right] \xlongequal{(\ref{cond 1 easy}), ~ h_{dd} = g^{dd}}  \left[-\sum_{k=1}^{d-1} a_k   g^{dk}    \sum _{l=1}^{d-1} r_l g^{ld}\right]+ \left[\sum_{k=1}^{d-1} a_k (r_k + r_k r_d)   \right]$$

We compute, for $1\leq k \leq d-1$, 
$$  \leftidx{^{\bar g}} \psi (v_k)  = \leftidx{^{\bar g}} \phi ( v_k) \xlongequal{(\ref{g phi v_k})} \sum_{l=1}^{d-1} r_l g^{lk} \sum _{m=1}^d h_{md} \hat v_m , $$ 
and $$ \leftidx{^{\bar g}} \psi (v_d) = \bar g \psi \bar g^{-1} v_d = \sum_{l=1}^{d} g^{ld} \bar g \psi v_l = \sum_{l=1}^{d-1}  r_{l} g^{ld} \sum _{m=1}^d h_{md} \hat v_m  - g^{dd} \sum_{j=1}^{d-1} r_j (gw_j) .  $$
Hence the RHS of (\ref{cond 2'}) is equal to 
$$ \sum_{k=1} ^d a_k v_k + \sum _{k=1}^d a_k  \sum_{l=1}^{d-1}  r_{l} g^{ld} \sum _{m=1}^d h_{md} \hat v_m - g^{dd} \sum_{j=1}^{d-1} r_j (gw_j) $$ 
$$ = \sum_{j=1}^{d-1} \left[a_j + \sum _{k=1}^d a_k  \sum_{l=1}^{d-1}  r_{l} g^{lk} h_{jd}\right] v_j + \left[a_d v_d\right]  + \left[ \sum _{k=1}^d a_k  \sum_{l=1}^{d-1}  r_{l} g^{lk}  h_{dd} \right] w_d - g^{dd} \sum_{j=1}^{d-1} r_j (gw_j).$$
Therefore RHS of (\ref{cond 2'}) pairs with $v_j$ ($1\leq j \leq d-1$) to give 
$$ - g^{dd} \sum_{j=1}^{d-1} r_j \lprod{gw_j, v_j} =  - g^{dd} \sum_{j=1}^{d-1} r_j \lprod{w_j, \sum_{ l=1}^{d-1} g^{lj} v_l}=   - g^{dd} \sum_{j=1}^{d-1} r_j g^{jj}. $$
On the other hand, the LHS of (\ref{cond 2'}) pairs with $v_j$ ($1\leq j \leq d-1$) to give $r_j$. Therefore (\ref{cond 2'}) implies that 
}
\end{proof}
\begin{prop}\label{structure of R}
	Assume $x_0 \in S_{\Lambda}^{\bar g}(k)$. Then the local ring $\mathcal R = \oo_{\mathcal I^{\bar g}, x_0}$ of $\mathcal I ^{\bar g}$ at $x_0$, is isomorphic to the local ring at the origin of the  subscheme of $\adele^{d-1}_k$ defined by the equations (\ref{key equation}), where $\adele_k^{d-1}$ has coordinates $r_1,\cdots , r_{d-1}$. Moreover, explicitly we have 
	$$ \mathcal R \cong k[X]/ X^c.$$
\end{prop}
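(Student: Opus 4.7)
The first claim follows directly from what has been set up: by Lemma \ref{eqn for I}(2) the open $\mathcal U \cap \mathcal I$ is identified with $\aff{d-1}_k$ via the projection to the $\phi$-factor, with coordinates $r_1, \ldots, r_{d-1}$, and by Proposition \ref{prop: fixed pts in Y} the $\bar g$-fixed locus inside this open coincides with the subscheme cut out by (\ref{key equation}). Localizing at the origin gives the first assertion.

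For the explicit description $\mathcal R \cong k[X]/X^c$, my plan is to perform a linear change of basis on $\LL_{d-1}$ that puts $H_1$ in a convenient form and then solve the resulting system. By Remark \ref{rem:regular} the operator $\bar g|_{\Phi \LL_d}$ is cyclic (minimal polynomial equals characteristic polynomial), so it decomposes as $E_\lambda \oplus E'$, where $E_\lambda$ is the generalized $\lambda$-eigenspace (a single Jordan block of size $c$) and $E'$ is $\bar g$-stable with $\bar g - \lambda$ invertible on it. Chasing the short exact sequence $0 \to \LL_{d-1} \to \Phi \LL_d \to \Phi \LL_d / \LL_{d-1} \to 0$ and using that $\bar g$ acts on the quotient by $\lambda$, I will verify: (i) $\LL_{d-1} \cap E_\lambda = (\bar g - \lambda) E_\lambda$ has dimension $c-1$ with $\bar g$ acting as a single Jordan block of size $c-1$ at $\lambda$; (ii) $E' \subset \LL_{d-1}$; (iii) the component of $(\bar g - \lambda) w_d$ in $\LL_{d-1} \cap E_\lambda$ is a cyclic vector for the Jordan block in (i).

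Choosing a Jordan basis of $\LL_{d-1} \cap E_\lambda$ together with any basis of $E'$ and relabeling the $r_i$ accordingly (a linear change that preserves the local ring at the origin), the matrix $H_1$ becomes block-diagonal $\mathrm{diag}(J_{c-1}(\lambda),\, H_1')$ with $H_1' - \lambda I$ invertible, and by (iii) the $\lambda$-component $H_2^\lambda = (h_1, \ldots, h_{c-1})^T$ of $H_2$ has $h_{c-1} \neq 0$. Writing $\vec r = (\vec r^\lambda, \vec r')$ and $s := \vec r H_2$, the equation (\ref{key equation}) splits into $\vec r'(H_1' - \lambda I + sI) = 0$ and $\vec r^\lambda N + s\, \vec r^\lambda = 0$ with $N := J_{c-1}(\lambda) - \lambda I$. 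Since $H_1' - \lambda I$ is invertible and $s$ lies in the maximal ideal, the first equation forces $\vec r' = 0$ in $\mathcal R$. The second reads componentwise $r_{j-1} = -s\, r_j$ for $2 \leq j \leq c-1$ and $s\, r_1 = 0$; back-substitution gives $r_i = (-s)^{c-1-i} r_{c-1}$ for all $i$, and combining with $s = \sum h_i r_i$ produces the implicit equation $s = r_{c-1}\bigl(h_{c-1} + O(s)\bigr)$. By the formal implicit function theorem this has a unique solution $s = u\, r_{c-1}$ with $u(0) = h_{c-1}$ a unit; feeding into $s\, r_1 = 0$ gives $u^{c-1} r_{c-1}^c = 0$, i.e.\ $r_{c-1}^c = 0$. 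Setting $X := r_{c-1}$, all the $r_i$ are expressed as multiples of $X$, so the completion of $\mathcal R$ is $k[[X]]/X^c = k[X]/X^c$; since the latter is Artinian, $\mathcal R$ coincides with its completion.

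The main obstacle is the linear-algebra step (i)–(iii), which isolates the Jordan structure of $H_1$ and the cyclicity of $H_2^\lambda$ — these are what encode the integer $c$ into the geometry. Once that structural input is in hand, the remainder of the argument is routine local-algebra manipulation carried out inside the completion of $\mathcal R$.
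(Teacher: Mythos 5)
Your proposal is correct and follows essentially the same route as the paper: put $H_1$ in Jordan form (the paper places the $\lambda$-block last and deduces $h_{c-1}\neq 0$ from a rank count on $\lambda I_d - \left(\begin{smallmatrix} H_1 & H_2 \\ 0 & H_4\end{smallmatrix}\right)$, whereas you place it first and deduce it from the cyclicity of the $E_\lambda$-component of $(\bar g-\lambda)w_d$ — both valid), then eliminate variables from \eqref{key equation}; the paper's auxiliary variable $A$ plays exactly the role of your $s=\vec r H_2$. The only point to tighten is the last step: as written, your chain of consequences only exhibits $\mathcal R$ as a \emph{quotient} of $k[X]/X^c$, and to get equality you must package the manipulations as reversible variable eliminations (unit Jacobian, so each substitution is an isomorphism of local rings), which is precisely the bookkeeping the paper makes explicit by adjoining $A$ as a genuine polynomial variable before eliminating.
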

\begin{proof}
	The first claim follows from Lemma \ref{eqn for I} and Proposition \ref{prop: fixed pts in Y}. To compute $\mathcal R$ explicitly, we may and shall assume that the bases chosen in \ref{defn of U and I} are such that the matrix $H_1$ is already in its (upper-triangular) Jordan normal form. Recall from Definition \ref{defn of lambda c} that all the Jordan blocks have distinct eigenvalues. Let $J_{d_1} (\lambda_1), \cdots, J_{d_{s-1}} (\lambda_{s-1})$ be the Jordan blocks that have eigenvalues different from $\lambda$. Let $\lambda_s = \lambda$ and let $J_{d_s} (\lambda_s)$ be the Jordan block of eigenvalue $\lambda_s$ that appears in $H_1$, where we allow $d_s =0$. Then $d_s =c-1$. Moreover, we assume that $J_{d_1} (\lambda_1),\cdots, J_{d_s} (\lambda_s)$ appear in the indicated order. Note that $H_4 =\lambda$. 
	Write $H_1 = (h_{ij})_{1\leq i,j \leq d-1}$. The equations (\ref{key equation}) become 
	
\begin{align}\label{first eqn}
\begin{cases}
 r_{i-1} h_{i-1, i}  + ( h_{i,i}- \lambda +\vec r H_2) r_i = 0 ,~ 2\leq i \leq d-1 \\
( h_{1,1}- \lambda +\vec r H_2) r_1 = 0  
\end{cases}
\end{align}
		Note that when $h_{i,i}$ is not in the Jordan block $J_{d_s} (\lambda_s)$, we have $h_{i,i} -\lambda \in k^{\times}$, so the element $h_{i,i} - \lambda + \vec r H_2$ is a unit in the local ring $\oo_{\adele^{d-1}, 0}$. Hence for $i \leq d_1 + d_2 + \cdots + d_{s-1} = d-c$, each $r_i$ is solved to be a multiple of $r_{i-1}$ and this multiple eventually becomes zero when this procedure is iterated. In other words, the ideal in $\oo_{\adele^{d-1}, 0}$ defining $\mathcal R$ is generated by $$ r_1, r_2,\cdots, r_{d-c}, \quad (\vec r H_2)r_{d-c+1}, \quad   (\vec r H_2) r_i + r_{i-1}\ (d-c+1 < i \leq d-1).   $$
		When $c=1$, we have $\mathcal R \cong k$ as expected. Assume now $c \geq 2$. Let $h_1,\cdots, h_{c-1}$ be the last $c-1$ entries of the $(d-1)\times 1$-matrix $H_2$. Make the change of variables 
		$$\begin{cases}
X_i = r_{d-c +i}, ~ 1\leq i \leq c-1 , \\ 
A  = \vec r H_2.
		\end{cases}$$
		Then we have  
		$$\mathcal R \cong \left(\frac{k [X_1,\cdots, X_{c-1}, A] } { (  A-\sum_{ i =1}^{c-1} h_i X_i, ~ AX_1, X_1 + AX_2, ~X_2 +AX_3,\cdots, X_{c-2} +AX_{c-1})}\right)_{(X_1,\cdots, X_{c-1})}$$
		By eliminating the variables $X_1,\ldots,X_{c-2}$, we obtain that $$\mathcal{R} \cong \left(\frac{k[X_{c-1}, A]} { (X_{c-1} A^{c-1},~ A - X_{c-1} \sum_{i=0}^{c-2} h_{c-1-i} (-A)^i )}\right) _{(X_{c-1}, A)}.$$  
		Note that if $h_{c-1} = 0$, then the last two rows of the matrix $$\lambda I_d - \begin{pmatrix}
		H_1 & H_2 \\ 0 & H_4
		\end{pmatrix}$$ are both zero. This contradicts with the fact that the matrix $ \begin{pmatrix}
		H_1 & H_2 \\ 0 & H_4
		\end{pmatrix}$, which represents $\bar g$ on $\Phi \LL_d$, has in its Jordan normal form a unique Jordan block of eigenvalue $\lambda$ (cf. \S \ref{take x_0}).
		Hence $h_{c-1} \neq  0 $, and $\sum_{i=0}^{c-2} h_{c-1-i} (-A)^i$ is a unit in $k[X_{c-1}, A] _{(X_{c-1}, A)}$. It follows that
		$$\mathcal R\cong  \left(\frac{k[X]} { (X^c  )}\right) _{(X)} = k[X]/X^c,$$ as desired.
\end{proof}

\subsection{The intersection length formula}

We are now ready to determine the structure of the complete local ring of $S_{\Lambda}^{\bar g}$ at a $k$-point of it, when $p$ is large enough. It is a consequence of Lemma \ref{loc mod}, Proposition \ref{structure of R}, and some commutative algebra. 
\begin{thm}\label{main thm for multiplicity}
	Let $x_0 \in S_{\Lambda} ^{\bar g}(k)$. Let $\lambda $ and $c$ be as in Definition \ref{defn of lambda c}. Assume $p>c$. Then the complete local ring of $S_{\Lambda}^{\bar g}$ at $x_0$ is isomorphic to $k[X]/ X^c$. 
\end{thm}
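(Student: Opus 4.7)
The plan is to combine Lemma~\ref{loc mod}(2) with Proposition~\ref{structure of R} to pin down the truncation $\mathcal{S}_p = \mathcal{S}/\mathfrak{m}_{\mathcal{S}}^p$, and then upgrade this to a description of the full complete local ring $\widehat{\mathcal{S}}$ via Nakayama's lemma. Essentially all of the real work is already behind us; what remains is a short piece of commutative algebra, and the only delicate point is why the hypothesis $p>c$ (rather than $p\geq c$) is exactly what is needed.

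First I would read off from Proposition~\ref{structure of R} that $\mathcal{R}\cong k[X]/X^c$. Since $p>c$ we have $X^p\in (X^c)$, so $\mathfrak{m}_{\mathcal{R}}^p = 0$ in $k[X]/X^c$, giving $\mathcal{R}_p = \mathcal{R}/\mathfrak{m}_{\mathcal{R}}^p \cong k[X]/X^c$. Lemma~\ref{loc mod}(2) then yields $\mathcal{S}/\mathfrak{m}_{\mathcal{S}}^p \cong k[X]/X^c$. Passing to the $\mathfrak{m}_{\mathcal{S}}$-adic completion, the standard identification $\widehat{\mathcal{S}}/\mathfrak{m}_{\widehat{\mathcal{S}}}^p \cong \mathcal{S}/\mathfrak{m}_{\mathcal{S}}^p$ gives $\widehat{\mathcal{S}}/\mathfrak{m}_{\widehat{\mathcal{S}}}^p \cong k[X]/X^c$.

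To conclude I would argue as follows. The above isomorphism sends $\mathfrak{m}_{\widehat{\mathcal{S}}}^c$ into $(X)^c = 0$, hence $\mathfrak{m}_{\widehat{\mathcal{S}}}^c \subseteq \mathfrak{m}_{\widehat{\mathcal{S}}}^p$; the reverse inclusion $\mathfrak{m}_{\widehat{\mathcal{S}}}^p \subseteq \mathfrak{m}_{\widehat{\mathcal{S}}}^c$ is automatic from $p\geq c$. Thus
\[
\mathfrak{m}_{\widehat{\mathcal{S}}}^c \;=\; \mathfrak{m}_{\widehat{\mathcal{S}}}^p \;=\; \mathfrak{m}_{\widehat{\mathcal{S}}}\cdot \mathfrak{m}_{\widehat{\mathcal{S}}}^{p-1} \;\subseteq\; \mathfrak{m}_{\widehat{\mathcal{S}}}\cdot \mathfrak{m}_{\widehat{\mathcal{S}}}^c,
\]
where the last inclusion uses $p-1\geq c$, i.e., precisely the hypothesis $p>c$. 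Since $\widehat{\mathcal{S}}$ is Noetherian, Nakayama's lemma forces $\mathfrak{m}_{\widehat{\mathcal{S}}}^c = 0$, and therefore $\widehat{\mathcal{S}} = \widehat{\mathcal{S}}/\mathfrak{m}_{\widehat{\mathcal{S}}}^p \cong k[X]/X^c$, as desired. The only substantive obstacle is the bookkeeping with the two exponents $p$ and $c$; in particular, without $p-1\geq c$ one would be stuck with knowledge of $\mathcal{S}/\mathfrak{m}_{\mathcal{S}}^p$ but no way to remove the truncation.
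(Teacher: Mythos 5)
Your proposal is correct, and the reduction to Lemma~\ref{loc mod}(2) and Proposition~\ref{structure of R} is exactly the paper's; the difference is in the final commutative-algebra step, where you take a genuinely slicker route. The paper presents $\widehat{\mathcal S}=k[[X_1,\dots,X_{d-1}]]/I$, compares $I$ with the kernel $J$ of the composite map onto $k[X]/X^c$ via $I+\mathfrak m^p=J$, and then proves $\mathfrak m^p\subseteq I$ by an induction showing $Y^c\in I+\mathfrak m^{pl}$ for all $l$, invoking Krull's intersection theorem (twice) and a unit trick with $1-A$; this is modeled on \cite[Lemma 11.1]{RTZ}. You instead exploit that the target $k[X]/X^c$ has maximal ideal nilpotent of index $c\le p-1$: the isomorphism $\widehat{\mathcal S}/\mathfrak m^p\cong k[X]/X^c$ immediately gives $\mathfrak m^c\subseteq\mathfrak m^p\subseteq\mathfrak m\cdot\mathfrak m^{c}$ (the last inclusion using $p-1\ge c$, i.e.\ $p>c$), and Nakayama kills $\mathfrak m^c$, so the truncation at level $p$ is already the whole complete local ring. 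Every step checks out: the image of $\mathfrak m^c_{\widehat{\mathcal S}}$ in the quotient is the $c$-th power of its maximal ideal, hence zero; $\widehat{\mathcal S}/\mathfrak m_{\widehat{\mathcal S}}^p\cong\mathcal S/\mathfrak m_{\mathcal S}^p$ is the standard compatibility of completion with finite truncations; and the $c=1$ case is covered since $p\ge 2$. Your argument buys brevity and avoids choosing a presentation of $\widehat{\mathcal S}$, while the paper's argument additionally exhibits a generator $Y$ with $Y^c\in I$ explicitly; for the statement of the theorem your version suffices, and the hypothesis $p>c$ enters both arguments in essentially the same two places.
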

\begin{proof}
Since $S_{\Lambda}$ is smooth of dimension $d-1$ (cf. \S \ref{sec:variety-s_lambda}), the complete local ring of $S_{\Lambda}^{\bar g}$ at $x_0$ is of the form $$ \hat {\mathcal S}= k[[X_1,\cdots, X_{d-1}]]/I $$ for a proper ideal $I$ of $k[[X_1,\cdots, X_{d-1}]]$.\footnote{We use this notation because previously we used the notation $\mathcal S$ to denote the local ring of $S_{\Lambda}^{\bar g} $ at $x_0$.} Let $\mathfrak m$ be the maximal ideal of $k [[X_1,\cdots, X_{d-1}]]$ and let $\bar {\mathfrak  m }$ be the maximal ideal of $\hat {\mathcal S}$. By Lemma \ref{loc mod} and Proposition \ref{structure of R}, there is an isomorphism
$$\beta: \hat {\mathcal S}/\bar {\mathfrak m}^p \isom k[X]/ X^c. $$

We first notice that if $R_1$ is any quotient ring of $k[[X_1,\cdots, X_{d-1}]]$ with its maximal ideal $\mathfrak m_1$ satisfying $\mathfrak m_1 = \mathfrak m_1^2$ (i.e. $R_1 $ has zero cotangent space), then $R_1 =k$. In fact, $R_1$ is noetherian and we have $\mathfrak m_1^l = \mathfrak m_1 $ for all $l\in \ZZ_{\geq 1}$, so by Krull's intersection theorem we conclude that $\mathfrak m_1 = 0$ and $R_1 =k$. 

Assume $c=1$. Then $\hat {\mathcal S}/ \bar{\mathfrak m} ^p \cong k$, so $\hat {\mathcal{S}}$ has zero cotangent space and thus $\hat {\mathcal S }= k$. Next we treat the case $c\geq 2$. Let $\alpha$ be the composite 
 $$\alpha: k [[ X_1,\cdots, X_{d-1}]] \to\hat { \mathcal S}/\bar {\mathfrak m}^p  \xrightarrow{\beta}  k[X]/X^c. $$ Let $J = \ker \alpha$. It suffices to prove that $I= J$. Note that because $\beta$ is an isomorphism we have \begin{align}\label{first reln between I and J}
 I +  {\mathfrak m }^p = J.
 \end{align}
 In the following we prove $\mathfrak m^p \subset I$, which will imply $I= J$ and hence the theorem. The argument is a variant of \cite[Lemma 11.1]{RTZ}. 
 
 Let $Y \in k [[ X_1,\cdots, X_{d-1}]]$ be such that $\alpha(Y) =X$. Since $X$ generates the maximal ideal in $k[X]/X^c$, we have 
 \begin{align}\label{m=J+Y} \mathfrak m =  J +(Y).
 \end{align} Then by (\ref{first reln between I and J}) and (\ref{m=J+Y}) we have $\mathfrak m = I + (Y) + \mathfrak m ^p$,
 and so the local ring $k[[X_1,\cdots, X_{d-1}]]/ (I+(Y))$ has zero cotangent space. We have observed that the cotangent space being zero implies that the ring has to be $k$, or equivalently
  \begin{align}\label{m=I+Y}
 \mathfrak m = I + (Y)  
 \end{align}
 
 Now we start to show $\mathfrak m^p \subset I$. By (\ref{m=I+Y}) we have $\mathfrak m^p \subset I+ (Y^p)$, so we only need to prove $Y^p\in  I$. We will show the stronger statement that $Y^c \in I$. By Krull's intersection theorem, it suffices to show that $Y^c \in I + \mathfrak m ^{pl}$ for all $l\geq 1$. In the rest we show this by induction on $l$.
 
  Assume $l=1$. Note that $\alpha (Y^c) =0$, so by (\ref{first reln between I and J} ) we have $$ Y^c \in J=  I + \mathfrak m^p.$$ Suppose $Y^ c \in  I + \mathfrak m^{pl}$ for an integer $ l \geq 1$. Write
   \begin{align}\label{decomp Y^c}
   Y^c = i +  m, ~i \in I,~ m \in \mathfrak m^{pl}.
   \end{align}   By (\ref{m=J+Y}) we know $$\mathfrak m^{pl} \subset ( J + (Y)) ^{pl} \subset \sum _{s=0}^{pl} J^s (Y) ^{pl-s}. $$ Thus we can decompose $m\in \mathfrak m ^{pl}$ into a sum 
   \begin{align}\label{decomp m}
    m = \sum_{s=0} ^{pl} j_s Y^{pl-s},~ j_s \in J^s.
   \end{align}
By (\ref{decomp Y^c}) and (\ref{decomp m}), we have 
\begin{align*}
Y^c = i +  \sum _{s=0}^{pl} j_s Y ^{pl-s} .
\end{align*}
 Splitting the summation $\sum_{ s = 0} ^{pl} $ into two sums $\sum_{s= 0} ^{pl-c}$ and $\sum_{ s = pl-c +1} ^{pl}$ and moving the sum $\sum_{s= 0} ^{pl-c}$ to the left hand side, we obtain
 \begin{align}\label{to extract A}
 Y^c - \sum _{s=0}^{pl-c} j_s Y ^{pl-s} =  i + \sum _{s=pl-c+1}^{pl} j_s Y ^{pl-s}.
 \end{align}
 
   Denote $$A : = \sum_{s =0 } ^{pl-c} j_s Y^{pl-s-c} .$$ Then the left hand side of (\ref{to extract A}) is equal to $(1-A) Y^c$. Hence we have $$(1-A)  Y^c = i + \sum_{ s= pl-c+1} ^{pl} j_s Y^{pl -s}  \subset I + J^{pl-c+1} \xlongequal{(\ref{first reln between I and J})} I+ (I+\mathfrak m^p) ^{pl-c+1} = I + \mathfrak m ^{p (pl-c+1)}  \subset I + \mathfrak m ^{p(l+1)}, $$ where for the last inclusion we have used $c< p$. Since $1-A$ is a unit in $k[[X_1,\cdots, X_{d-1}]] $ (because $c<p$), we have $Y^c \in I + \mathfrak m^{p(l+1)}$. By induction, $Y^c \in I + \mathfrak m^{pl}$ for all $l\in \ZZ_{\geq 1}$, as desired.
\end{proof}

\begin{cor}\label{cor:mainformula}Let $g \in J_b(\QQ_p)$ be regular semisimple and minuscule. Assume $\RZ^g \neq \varnothing$ and keep the notation of \ref{notation}. Let $x_0 \in (\delta(\RZ^\flat)\cap \RZ^g)(k)$. Let $(\LL_{d-1}, \LL_d) \in S_{\Lambda} (k)$ correspond to $x_0$ via Proposition \ref{prop:nonreduced} and define $\lambda, c$ as in Definition \ref{defn of lambda c}. Assume $p>c$. Then the complete local ring of $\delta(\RZ^\flat)\cap \RZ^g$ at $x_0$ is isomorphic to $k[X]/ X^c$. Moreover, we have $c=\frac{m(Q(T))+1}{2}$, where $Q(T)$ as in Theorem \ref{thm:pointscounting}. In particular, $1\leq c\leq n/2$. 
\end{cor}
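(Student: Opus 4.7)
The plan is to prove the three claims in sequence.

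The first assertion is a direct consequence of Proposition~\ref{prop:nonreduced} and Theorem~\ref{main thm for multiplicity}. The $p^{\ZZ}$-action on $\RZ$ is free (since $\eta_b(p)=p^2$ freely permutes the components $\RZ^{(\ell)}$), so the isomorphism $p^{\ZZ}\backslash(\delta(\RZ^\flat)\cap\RZ^g)\cong S_{\Lambda}^{\bar g}$ from loc.~cit.\ identifies complete local rings at $x_0$ and at $(\LL_{d-1},\LL_d)$, and Theorem~\ref{main thm for multiplicity} gives $k[X]/X^c$ under the standing hypothesis $p>c$.

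The substantive step is $c=(m(Q(T))+1)/2$. The approach is to recover the factorization $P(T)=P_1(T)Q(T)P_1^*(T)$ from the proof of Theorem~\ref{thm:pointscounting}. Let $L\subseteq V_K$ be the special lattice corresponding to $x_0$, and let $\Lambda_x$ be the minimal vertex lattice associated to $L$; by Lemma~\ref{lem:Lg}(4) one has $L\supseteq L(g)_W$, hence $L(g)\subseteq L^\Phi=\Lambda_x^\vee$, i.e.\ $\Lambda_x\subseteq L(g)^\vee$. Setting $U:=\Lambda_x^\vee/L(g)\subseteq\Omega_0$ yields the totally isotropic $\bar g$-invariant subspace from loc.~cit., with $P_1$ and $Q$ the characteristic polynomials of $\bar g$ on $U$ and $U^\perp/U$ respectively, and $m(Q(T))=2a+1$ where $a$ denotes the multiplicity of $Q$ in $P_1$. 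The inclusions $\Lambda_{x,W}^\vee\subseteq L\cap\Phi L$ and $L,\Phi L\subseteq\Lambda_{x,W}$ translate to $U_k\subseteq\LL_{d-1}$ and $\LL_d,\Phi\LL_d\subseteq U^\perp_k$ inside $\Omega$. Since $\bar g$ acts cyclically on $\Omega_0$ (Remark~\ref{rem:regular}), $\bar g|_\Omega$ has a single Jordan block at $\lambda$, so $c$ equals the multiplicity of $(T-\lambda)$ in the characteristic polynomial of $\bar g|_{\Phi\LL_d}$.

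The $\bar g$-equivariant exact sequence $0\to U_k\to\Phi\LL_d\to\Phi\LL_d/U_k\to 0$ factors this characteristic polynomial as $P_1(T)\cdot \mathrm{charpoly}(\bar g|_{\Phi\LL_d/U_k})$. The first factor contributes multiplicity $a$ at $T-\lambda$, since $\lambda$ is a root of the irreducible $Q$ (separable over $\FF_p$) and $a$ is the multiplicity of $Q$ in $P_1$. The second factor contributes multiplicity exactly $1$, for the following reasons: $\Phi\LL_d/U_k$ is a subspace of $U^\perp_k/U_k$, where $\bar g$ acts semisimply with simple spectrum (since $Q$ is irreducible, hence separable), so the multiplicity is at most $1$; and it is at least $1$ because the $\bar g$-equivariant surjection $\Phi\LL_d/U_k\twoheadrightarrow\Phi\LL_d/\LL_{d-1}$ onto a line on which $\bar g$ acts by $\lambda$ rules out invertibility of $\bar g-\lambda$ on the source. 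Thus $c=a+1=(m(Q(T))+1)/2$. The most delicate point is this final eigenvalue count, which relies jointly on the irreducibility of $Q$ and on the specific $1$-dimensional quotient $\Phi\LL_d/\LL_{d-1}$. The bounds are then easy: $c\geq 1$ since $m(Q(T))\geq 1$, and since $U\subseteq U^\perp\subseteq\Omega_0$ with $U$ totally isotropic in a $2d$-dimensional space with non-degenerate form, $\dim U^\perp/U=2(d-\dim U)$ is even and $\deg Q\geq 2$; together with $m(Q(T))\deg Q\leq\deg P=t_\Lambda\leq n$, this gives $m(Q(T))\leq n/2$, whence $c\leq n/2$.
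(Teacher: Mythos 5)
Your proof is correct and follows essentially the same route as the paper: the first claim via Proposition \ref{prop:nonreduced} and Theorem \ref{main thm for multiplicity}, and the formula for $c$ by sandwiching the characteristic polynomial of $\bar g|_{\Phi\LL_d}$ between $P_1(T)$ and $P_1(T)Q(T)$ using the vertex lattice $\Lambda_x=\Lambda(L)$ of the stratum containing $x_0$. In fact you make explicit the one step the paper leaves terse, namely why the multiplicity at $\lambda$ is exactly $a+1$ rather than $a$, via the $\bar g$-equivariant surjection $\Phi\LL_d/U_k\twoheadrightarrow\Phi\LL_d/\LL_{d-1}$ combined with the simplicity of the spectrum on $U_k^{\perp}/U_k$.
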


\begin{proof}
The first part follows immediately from Proposition \ref{prop:nonreduced} and Theorem \ref{main thm for multiplicity}. It remains to show that $$c=\frac{m(Q(T))+1}{2}.$$ Suppose $x_0\in \BT_{\Lambda'}$ for some vertex lattice $\Lambda'$ (not necessarily equal to $\Lambda = L(g)^\vee$). Let $L$ be the associated special lattice. Then we have (\S \ref{sec:spec-latt-vert}) $$(\Lambda')_W^\vee\subseteq L\subseteq \Lambda'_W,\quad (\Lambda')_W^\vee\subseteq \Phi(L)\subseteq \Lambda'_W.$$ Hence the eigenvalue $\lambda$ of $\bar g$ on $\Phi(\mathcal{L}_d)/\mathcal{L}_{d-1}\cong (L+\Phi(L))/L$ appears among the eigenvalues of $\bar g$ on $\Lambda' /(\Lambda')^\vee$, and so the minimal polynomial of $\bar g$ on $\Lambda' /(\Lambda')^\vee$ in $\mathbb{F}_p[T]$ is equal to $Q(T)$ by the proof of Theorem \ref{thm:pointscounting}. Notice that the characteristic polynomial of $\bar g$ on $\Phi(\mathcal{L}_d)$ (in $k[T]$) divides  $R(T)Q(T)$ (the characteristic polynomial of $\bar g$ on $\Lambda'_W/L(g)$) and also is divided by $R(T)$ (the characteristic polynomial of $\bar g$ on $(\Lambda')_W^\vee/L(g)$). It follows that $c$, the multiplicity of $\lambda$ of  $\bar g$ on $\Phi(\mathcal{L}_d)$, is equal to the multiplicity of $\lambda$ in $R(T)Q(T)$. The desired formula for $c$ then follows since $$m(Q(T))+1=2\cdot\text{ the multiplicity of } Q(T)\text{ in }R(T)Q(T).$$ Finally, we note that $m(Q(T))$ is a positive odd integer not greater than the degree of $P(T)$, and the latter, being the type of the vertex lattice $\Lambda = L(g)^{\vee}$, is an even integer $\leq t_{\max}$ (cf. \S \ref{sec:vertex-lattices}). The bound for $c$ follows from the value of $ t_{\max}$ given in \S \ref{sec:vertex-lattices}. 
\end{proof}

\ignore{\subsection{Root computation}
Having chosen the ordered basis $e_1,\cdots, e_d, f_1,\cdots, f_d$ of $\Omega$, we have a corresponding trivialized maximal torus $\mathbb T$ of $\mathbb G$ (i.e. a maximal torus together with an isomorphism to $\GG_m^d$). Denote its natural characters by $$\epsilon_1,\cdots, \epsilon_d. $$ Then all the roots of $(\mathbb T, \mathbb G)$ are given by 
$$ \alpha_{i,j} : = \epsilon_i + \epsilon_j, ~ 1 \leq i < j \leq d $$ 
$$\beta_{i,j} : = \epsilon_i - \epsilon_j, ~ 1 \leq i < j \leq d$$ 
$$ - \alpha_{i,j}, ~  - \beta_{i,j} , ~ 1 \leq i < j \leq d. $$
We denote by \begin{align}
\label{root vectors}
\lprod{A_{i,j}} :  = \Lie (\mathbb G) _{\alpha_{i,j}} \\ 
\nonumber \lprod{\mathfrak  A_{i,j}}: = \Lie (\mathbb G) _{-\alpha_{i,j}}\\
\nonumber \lprod{B_{i,j} }: = \Lie (\mathbb G) _{\beta_{i,j}}\\ \nonumber\lprod{ \mathfrak B_{i,j}} : = \Lie (\mathbb G) _{-\beta_{i,j}}.
\end{align}

We use the symbols $A_{i,j}$, etc. to denote a chosen basis vector of $\lprod{A_{i,j}}$, etc. 
We have 
$$  [A_{i,j}, \mathfrak A_{i,j} ] \equiv [B_{i,j}, \mathfrak B_{i,j}] \equiv 0 \mod \Lie \mathbb T.$$
Now $\Lie P^+$ is the span of 
$$ \Lie \mathbb T \cup \set{A_{i,j}, B_{i,j}, \mathfrak B_{i,j}~| ~1\leq i<j \leq d }. $$
$\Lie P^-$ is the span of 
$$ \Lie \mathbb T \cup \set{A_{i,j}, B_{i,j}~ |~ 1\leq i < j \leq d } \cup \set{\mathfrak B_{i,j}~ | ~ 1\leq i < j <d} \cup \set{\mathfrak A_{i,d} ~|~ 1\leq i < d}. $$
$\Lie P_0$ is the span of 
$$ \Lie \mathbb T \cup \set{A_{i,j}, B_{i,j}~ |~ 1\leq i < j \leq d } \cup \set{\mathfrak B_{i,j}~ | ~ 1\leq i < j <d} .$$
(We see that indeed $\Phi$ switches $\Lie P^+$ and $\Lie P^-$ and we have $\Lie P^+ \cap \Lie P^- = \Lie P_0$.)

We may identify $\mathcal T_{x_0} S_{\Lambda} ^+ = \Lie P^-/\Lie P_0$ with the span of 
$$\set{\mathfrak A_{i,d} | ~ 1\leq i < d}. $$
From this we see that $\dim_k \mathcal T_{x_0} S_{\Lambda} ^+$ is indeed $d-1$, as expected.

Now we investigate the vector space $\mathcal T_{x_0} (S_{\Lambda} ^+) ^{\bar g}.$ Recall that this is the same as 
$$(\Lie P^- / \Lie P_0)^{\ad h_0} = \set{X\in \Lie P^- / \Lie P_0 | (\ad h _0) X \equiv X \mod  \Lie P_0}.
$$

 We first study the equation 
$$ [\varpi , X] \equiv 0 \mod \Lie P_0 $$ for $\varpi \in \Lie P_0$ and $X \in \Lie P^- / \Lie P_0$.

Now suppose we have an element $  \varpi $ of $\Lie P_0$, of expansion 
$$\varpi = \tau + \sum _{i< j <d}( a_{i,j} A_{i,j} + b_{i,j} B_{i,j} + \bar b_{i,j} \mathfrak B_{i,j}) + \sum_{i<d} (a_{i,d} A_{i,d} + b_{i,d} B_{i,d}),$$ where $\tau \in \Lie \mathbb T$.
Suppose we are also given $$X = \sum _{1\leq i < d} X^i \mathfrak A_{i,d}~   \in \mathrm{span}_k \set{\mathfrak A_{i,d} ~ | ~ 1\leq i <d} = \Lie P^- / \Lie P_0.$$
The following relations are automatic:
$$[A_{i,j} , X]  \equiv 0 ~\mod \Lie P_0, ~ \forall 1\leq i< j \leq d-1. $$
$$[A_{i,d} , X]  \equiv [B_{i,d} , X] \equiv 0 ~\mod \Lie P_0, ~ \forall 1\leq i< d. $$
Hence $[\varpi , X] \mod \Lie P_0$ depends only on the component 
$$ \tau+ \sum_{ i< j <d} b_{i,j} B_{i,j} + \bar b_{i,j} \mathfrak B_{i,j} $$ of $\varpi.$
In the following $i,j,i',j'$ are indices with $1\leq i< j <d$, $1\leq i' <d, 1\leq j' <d$. Write $\sim$ for colinearity. We have 
\begin{align*}
[\mathfrak B_{i,j},\mathfrak A_{j',d} ] \sim \delta _{j,j'} \mathfrak A_{i,d}\\
 [B_{i,j}, \mathfrak A_{i',d}] \sim \delta_{i,i'} \mathfrak A_{j,d}
\end{align*}
We may rescale the $B_{i,j}$'s and $\mathfrak B_{i,j}$'s to arrange
\begin{align}
\label{1}[\mathfrak B_{i,j},\mathfrak A_{j,d} ] = \mathfrak A_{i,d}\\
[B_{i,j}, \mathfrak A_{i,d}] = \mathfrak A_{j,d}
\end{align}

We have, modulo $\Lie P_0$,
$$[\varpi , X] \equiv [ \tau + \sum_{ i< j <d} b_{i,j} B_{i,j} + \bar b_{i,j} \mathfrak B_{i,j}~,~ X]  = [\tau ,X] + \sum_{ i< j <d} b_{i,j} X^i  \mathfrak A_{j,d}  + \sum_{ i< j <d} \bar b_{i,j} X^j \mathfrak A_{i,d } $$
$$ =[\tau, X] + \sum_{1\leq l <d} \left(\sum_{i<l } b_{i,l} X^i + \sum _{ l<j<d}  \bar b_{l,j} X^j \right) \mathfrak A_{l,d}.$$
Also 
$$[\tau,X] = \sum_{1\leq l <d} X^l  \cdot (- \alpha_{l,d} )(\tau) \mathfrak A_{l,d}  $$
Write $\tau^l: = (-\alpha _{l,d}) (\tau)$.
Hence $[\varpi , X] \equiv 0 \mod \Lie P_0$ if and only if 
$$ \tau^l X^l + \sum_{i<l } b_{i,l} X^i + \sum _{ l<j<d}  \bar b_{l,j} X^j = 0,~ \forall 1\leq l <d.$$
We view this as a system of $d-1$ linear equations in the unknowns 
$X^1,\cdots, X^{d-1}$. When $\varpi$ satisfies a suitable regularity condition, this system should have nullity $0$ or $1$
.}

\bibliographystyle{hep}
\bibliography{myref}
	
\end{document}